\newtheorem{thm}{Theorem}[section]
\newtheorem{lemma}[thm]{Lemma}
\newtheorem{prop}[thm]{Proposition}
\newtheorem{cor}[thm]{Corollary}
\newtheorem{defn}[thm]{Definition}
\newtheorem{example}[thm]{Example}
\newtheorem{remark}[thm]{Remark}
\newcommand{\N}{\mathbb{N}}
\newcommand{\R}{\mathbb{R}}
\newcommand{\Z}{\mathbb{Z}}
\newcommand{\NN}{\mathcal{N}}
\newcommand{\supp}{\operatorname{supp}}
\newcommand{\D}{\Delta}
\newcommand{\1}{\mathbbm{1}}
\def\K{\mathcal{K}}
\newcommand{\udots}{\reflectbox{$\ddots$}}
\begin{document}

\title{Curvatures, graph products and Ricci flatness}

\date{\today}

%\author{D. Cushing, S. Kamtue, R. Kangaslampi, Sh. Liu, N. Peyerimhoff}

\author[Cushing]{David Cushing}
\address{D. Cushing, Department of Mathematical Sciences, Durham University, Durham DH1 3LE, United Kingdom}
\email{davidcushing1024@gmail.com}

\author[Kamtue]{Supanat Kamtue}
\address{S. Kamtue, Department of Mathematical Sciences, Durham University, Durham DH1 3LE, United Kingdom}
\email{supanat.kamtue@durham.ac.uk}

\author[Kangaslampi]{Riikka Kangaslampi}
\address{R. Kangaslampi, Unit of Computing Sciences, Tampere University 33014, Tampere 33014, Finland}
\email{riikka.kangaslampi@tuni.fi }

\author[Liu]{Shiping Liu}
\address{S. Liu, School of Mathematical Sciences, University of Science and Technology of China, Hefei 230026, China}
\email{spliu@ustc.edu.cn}

\author[Peyerimhoff]{Norbert Peyerimhoff}
\address{N. Peyerimhoff, Department of Mathematical Sciences, Durham University, Durham DH1 3LE, United Kingdom}
\email{norbert.peyerimhoff@durham.ac.uk}

\maketitle

\begin{abstract}
  In this paper, we compare Ollivier Ricci curvature and Bakry-\'Emery
  curvature notions on combinatorial graphs and discuss connections to
  various types of Ricci flatness. We show that non-negativity of
  Ollivier Ricci curvature implies non-negativity of Bakry-\'Emery
  curvature under triangle-freeness and an additional in-degree
  condition. We also provide examples that both conditions of this
  result are necessary. We investigate relations to graph products and
  show that Ricci flatness is preserved under all natural
  products. While non-negativity of both curvatures are preserved
  under Cartesian products, we show that in the case of strong
  products, non-negativity of Ollivier Ricci curvature is only
  preserved for horizontal and vertical edges. We also prove that all
  distance-regular graphs of girth $4$ attain their maximal possible
  curvature values.
\end{abstract}

\section{Introduction}

\subsection{Motivation of the paper}

Curvature is a fundamental notion in the setting of smooth Riemannian
manifolds. There is no unique choice of an analogue of curvature in
the setting of combinatorial graphs. Two possibilities are
\emph{Ollivier Ricci curvature} and \emph{Bakry-\'Emery curvature}
which are both motivated by specific curvature properties of
Riemannian manifolds.  Ollivier Ricci curvature, introduced in \cite{Oll}, is based on the observation that, in the case of positive/negative Ricci curvature, average distances between corresponding point in two nearby small balls in Riemannian manifolds are smaller/larger than the distance between their centres. This fact is reinterpreted using the theory of Optimal Transportation of probability measures representing these balls. 
Bakry-\'Emery curvature, introduced in \cite{BE85}, is based on the so-called \emph{curvature-dimension inequality} which reads for $n$-dimensional Riemannian manifolds $(M,g)$ as follows:
\begin{equation} \label{eq:curv-dim-ineq}
\frac{1}{2} \Delta \Vert {\rm{grad}{f}} \Vert^2(x) \ge \langle \nabla f(x),
\nabla \Delta f(x) \rangle + \frac{1}{n} (\Delta f(x))^2 +
{\rm{Ric}}(\nabla f, \nabla f)(x)
\end{equation}
for all $f \in C^\infty(M)$ and $x \in M$. Here, ${\rm{Ric}}(v,w)$ for
tangent vectors $v,w$ at $x$ stands for the Ricci curvature of the
manifold. This formula is a straightforward implication of Bochner's
identity, a fundamental fact in Riemannian Geometry with many
important consequences. Both curvature notions have been further discussed in the setting of graphs in several literatures (see, e.g., \cite{LLY} for Ollivier Ricci curvature and \cite{Elworthy,LY,Schm99} for Bakry-\'Emery curvature). For the precise definitions of both notions in this paper, we refer to Section \ref{sec:curvnotions}.

While there are many special cases in which these two discrete
curvature notions are related, it is a challenging problem to develop
a satisfactory general understanding of the agreements and differences
of these two curvature notions.

One special family of graphs which have both non-negative Ollivier
Ricci curvature and non-negative Bakry-\'Emery curvature was
introduced by F.R.K. Chung and S.-T. Yau \cite{ChY96}, namely
\emph{Ricci flat graphs}. The notion of Ricci flatness was motivated
by the structure of the $d$-dimensional grid $\Z^d$ (with vanishing
Ollivier Ricci and Bakry-\'Emery curvature) and the class of Ricci
flat graphs contains all abelian Cayley graphs as a subset. 
%Moreover, this class is closed under natural graph products.

The motivation of this paper is to investigate various relations
between these two curvature notions and the property of Ricci flatness
with special focus on triangle-free graphs. We also present explicit
examples of graphs related to our results. The curvatures of these
examples were calculated numerically via the interactive
web-application at
\begin{center}
\url{https://www.mas.ncl.ac.uk/graph-curvature/}
\end{center}
For more details about this very useful tool we refer the readers to
\cite{CKLLS2018}.

\subsection{Statement of results}

Let $G=(V,E)$ be a regular graph. Ollivier Ricci curvature
$\kappa_p(x,y)$ is defined on edges $\{x,y\} \in E$ and there is an
idleness parameter $p \in [0,1]$ involved. Lin, Lu, and Yau introduced in \cite{LLY} a modified notion of Ollivier Ricci curvature, denoted by
$\kappa_{LLY}(x,y)$. Both notions are introduced in Definition
\ref{defn:Ollcurv}. While it is known that
$\kappa_0 \le \kappa_{LLY}$, our first result shows in Subsection \ref{sec:curvOR} that positive
$\kappa_{LLY}$-curvature implies non-negativity of
$\kappa_0$-curvature:

\begin{thm} \label{thm:kLLYk0comp} Let $G=(V,E)$ be a regular
  graph. Then we have the following implication for all edges
  $\{x,y\} \in E$:
  $$ \kappa_{LLY}(x,y) > 0 \quad \Longrightarrow \quad \kappa_0(x,y) \ge 0. $$
\end{thm}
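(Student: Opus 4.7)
I would prove the contrapositive: assuming $\kappa_0(x,y) < 0$, deduce $\kappa_{LLY}(x,y) \le 0$. The key tool is Kantorovich--Rubinstein duality,
\[
W_1(\mu_x^p,\mu_y^p) = \sup_{\phi \text{ 1-Lipschitz}} \bigl[p\, A(\phi) + (1-p)\, B(\phi)\bigr],
\]
where I write $A(\phi) := \phi(x)-\phi(y)\in[-1,1]$ and $B(\phi) := \bar{\phi}(x)-\bar{\phi}(y)$ with $\bar{\phi}(z) := \tfrac{1}{d}\sum_{w \sim z}\phi(w)$. From $\kappa_0(x,y) < 0$, duality produces a 1-Lipschitz $\phi^*$ with $B(\phi^*) > 1$. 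My target is to upgrade $\phi^*$ to a 1-Lipschitz $\psi$ satisfying simultaneously $A(\psi)=1$ and $B(\psi) \ge 1$. Any such $\psi$ plugged into the dual formula gives $W_1(\mu_x^p,\mu_y^p) \ge p\cdot 1 + (1-p)\cdot 1 = 1$ for every $p \in [0,1]$, forcing $\kappa_p(x,y) \le 0$ for all $p$ and, in the limit $p \to 1$, $\kappa_{LLY}(x,y) \le 0$.

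The construction hinges on the identity $B(\phi) = \tfrac{1}{d}\bigl(S(\phi) - A(\phi)\bigr)$, which follows from $y \in N(x)$ and $x \in N(y)$; here $S(\phi)$ is a functional depending only on the restriction of $\phi$ to $V \setminus \{x,y\}$. The natural candidate keeps $\phi^*$ on $V \setminus \{x,y\}$ and only resets $\psi(x),\psi(y)$ to achieve $A(\psi)=1$; this preserves $S$ and changes $B$ by $(A(\phi^*) - 1)/d \in [-2/d, 0]$. A local Lipschitz estimate (using $\phi^*(u) - \phi^*(v) \le A(\phi^*)+2$ for $u\in N(x)\setminus\{y\}, v\in N(y)\setminus\{x\}$) shows that $B(\phi^*)>1$ already forces $d\ge 3$ and $A(\phi^*)>-1$, so this direct adjustment succeeds in the generic regime $B(\phi^*) \ge 1 + (1 - A(\phi^*))/d$.

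The hard part is the borderline case where the direct adjustment would drop $B(\psi)$ below $1$. I would resolve it via a balanced global modification: simultaneously raise $\phi^*$ on $N(x)\setminus\{y\}$ and lower it on $N(y)\setminus\{x\}$ by equal compensating amounts --- this is made possible by the regularity $|N(x)\setminus\{y\}| = |N(y)\setminus\{x\}| = d - 1$ --- and propagate the change outward along geodesics to preserve 1-Lipschitz continuity on all of $V$. The technical core is showing that this balanced modification remains feasible and produces $B(\psi)\ge 1$ under the sole hypothesis $B(\phi^*) > 1$; I expect that the regularity condition enters decisively here to make the contributions from the two neighborhoods exactly cancel, while the earlier local bound is used to rule out the degenerate configurations.
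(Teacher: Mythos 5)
Your dual setup is sound: the identity $W_1(\mu_x^p,\mu_y^p)=\sup_\phi\,[pA(\phi)+(1-p)B(\phi)]$, the decomposition $B=(S-A)/d$ with $S(\phi)=\sum_{u\in S_1(x)\setminus\{y\}}\phi(u)-\sum_{v\in S_1(y)\setminus\{x\}}\phi(v)$, and the consequences $d\ge 3$ and $A(\phi^*)>-1$ of $B(\phi^*)>1$ are all correct. This is also a genuinely different route from the paper, which works on the primal side (an optimal unsplit plan at idleness $p=1/(d+1)$, a case analysis on the matched distances $N_1,N_2,N_3$, and an explicit plan in $\Pi(\mu_x^0,\mu_y^0)$ of cost at most $1$). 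But the proof is not complete. The entire content of the theorem sits in your ``borderline case'' $d<S(\phi^*)<d+1$, and there you only describe a hoped-for construction (raise on $S_1(x)\setminus\{y\}$, lower on $S_1(y)\setminus\{x\}$, propagate along geodesics) whose feasibility you explicitly defer, saying you ``expect'' regularity to make it work. Nothing guarantees such a modification exists: a vertex $u\in S_1(x)\setminus\{y\}$ with $\phi^*(u)=\phi^*(x)+1$ is already saturated and cannot be raised without raising $\phi^*(x)$, and controlling the resulting cascade of Lipschitz constraints is precisely the difficulty the theorem is about. A smaller issue: even in your ``generic regime'' the reset of $\psi(x),\psi(y)$ to force $A(\psi)=1$ need not preserve the Lipschitz condition --- it requires $\max_{S_1(y)\setminus\{x\}}\phi^*\le \min_{S_1(x)\setminus\{y\}}\phi^*+1$, which does not follow from your estimates. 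That one is harmless, because by \eqref{eq:LLYOllconn} you only need the single idleness $p=1/(d+1)$, for which the dual objective equals $S(\phi)/(d+1)$ and ignores the values of $\phi$ at $x$ and $y$ altogether.

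The gap can be closed within your framework, but by a different mechanism than the one you propose: for an integer-valued cost, the optimal Kantorovich potential can be chosen integer-valued (standard integrality of the dual linear program; this is how \cite{I} operates). With an integer-valued optimal $\phi^*$ for $W_1(\mu_x^0,\mu_y^0)$, your estimate $A(\phi^*)>-1$ forces $A(\phi^*)\in\{0,1\}$, hence $S(\phi^*)=dB(\phi^*)+A(\phi^*)>d+A(\phi^*)\ge d$; since $S(\phi^*)$ is an integer, $S(\phi^*)\ge d+1$, so $W_1\bigl(\mu_x^{1/(d+1)},\mu_y^{1/(d+1)}\bigr)\ge S(\phi^*)/(d+1)\ge 1$ and $\kappa_{LLY}(x,y)\le 0$. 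In other words, the borderline case cannot occur for an integer-valued optimizer. Without invoking integrality (or some equivalent discreteness, which the paper extracts from the unsplit optimal transport plan), your real-valued surgery argument does not establish the theorem.
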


The Bakry-\'Emery curvature is defined on vertices and the above
inequality \eqref{eq:curv-dim-ineq} involves a dimension parameter
$n$. Since graphs do not have a well-defined dimension, a natural
choice simplifying this inequality is $n = \infty$. The induced
Bakry-\'Emery curvature value at a vertex $x$ is then denoted by
$\K_\infty(x)$ (see Definition \ref{defn:BEcurvature}).

\medskip

Let us now turn to the above mentioned notion of Ricci flatness. Ricci
flatness is defined locally for individual vertices. In this paper we
also introduce stronger types of Ricci flatness, namely $(R)$-, $(S)$- and $(RS)$-Ricci flatness (see Definition \ref{defn:ricci_flat} below). A fundamental consequence of Ricci flatness is that it implies both
non-negativity of Ollivier Ricci and Bakry-\'Emery curvatures; the
stronger property of $(R)$-Ricci flatness implies even strict
positivity of these two curvatures (see Theorems \ref{thm:ORcurvRF}
and \ref{thm:BEcurvRF}).

Another basic property of Ricci flatness is that it is preserved under
natural graph products (see Theorem \ref{thm:preserv-Rf}). The graph
products under consideration namely, Cartesian product (involving
horizontal and vertical edges), tensorial product (involving only
diagonal edges), and the strong product (involving all three types of
edges), are introduced in Definition \ref{def:graph-products}
below. While Cartesian products preserve non-negativity of both
Ollivier Ricci curvature and Bakry-\'Emery curvature, in the case of
strong products, non-negative Ollivier Ricci curvature is only
preserved for horizontal and vertical edges (see Corollary
\ref{cor:strong_prod_nonneg}).

\medskip

We also consider the case of graphs which contain no triangles. In Section \ref{sec:triangle-free}, we present our main result of this paper relating the two curvature notions. P. Ralli \cite{Ralli} gave an
interesting criterion for curvature sign agreement of both curvature
notions for triangle-free graphs which do not contain the complete
bipartite graph $K_{2,3}$ as a subgraph. He mentions that the
situation is much more unclear if one restricts to general
triangle-free graphs. Our result requires triangle-freeness at a
vertex $x$ and the additional assumption that the in-degrees of
vertices in the $2$-sphere $S_2(x)$ are smaller or equal to $2$. This
assumption is weaker than non-existence of $K_{2,3}$ as a subgraph.

\begin{thm} \label{cor:curvimp}
  Given a regular graph $G=(V,E)$, let $x \in V$ be a vertex not
  contained in a triangle and satisfying $d_x^-(z) \le 2$ for all
  $z \in S_2(x)$. Then we have the following:
  \begin{itemize}
  \item[(a)] $\kappa_0(x,y) = 0$ for all $y \in S_1(x)$ implies
    $\K_\infty(x) \ge 0$.
  \item[(b)] $\kappa_{LLY}(x,y) = \frac{2}{d}$ for all $y \in S_1(x)$ implies $\K_\infty(x) = 2$.
  \end{itemize}
\end{thm}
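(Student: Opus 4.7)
The plan is to extract combinatorial data (perfect matchings between neighbourhoods realised by local $4$-cycles) from the Ollivier-Ricci hypotheses, to use the in-degree bound to glue these matchings consistently into the structure of Definition \ref{defn:ricci_flat}, and finally to invoke Theorem \ref{thm:BEcurvRF} which converts Ricci flatness into the desired Bakry-\'Emery bound.

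Step 1: characterise $\kappa_0(x,y)=0$ locally. Triangle-freeness at $x$ gives $N(x)\cap N(y)=\emptyset$, so in any transport plan between $\mu_x^0$ and $\mu_y^0$ every unit of mass must move a distance at least $1$. The equality $W_1(\mu_x^0,\mu_y^0)=1$ therefore forces each unit of mass in an optimal plan to move at distance exactly $1$. Equivalently, there is a bijection $\sigma_y\colon N(x)\to N(y)$ such that $x'$ and $\sigma_y(x')$ are adjacent for every $x'\in N(x)$. Apart from the canonical pairs involving $x$ or $y$, each cross-pair $(x',\sigma_y(x'))$ with $x'\in N(x)\setminus\{y\}$ closes a $4$-cycle $x-x'-\sigma_y(x')-y-x$, and the common neighbour $\sigma_y(x')$ lies in $S_2(x)\cap S_1(y)$.

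Step 2: glue the matchings using the in-degree hypothesis. For each neighbour $y$ of $x$ we now have a perfect matching $\sigma_y$ on $N(x)$. For a vertex $z\in S_2(x)$ occurring as $z=\sigma_y(x')$, the bound $d_x^-(z)\le 2$ forces $z$ to have exactly two neighbours in $N(x)$, namely $x'$ and some uniquely determined $x''$; in particular the matchings cannot create branching $K_{2,3}$-type configurations in $S_2(x)$. This rigidity is precisely what is needed to assemble the family $\{\sigma_y\}_{y\in N(x)}$ into a single coherent system of $d$ local permutations on $N(x)$ satisfying the Chung-Yau-type conditions of Definition \ref{defn:ricci_flat}, thereby identifying $x$ as a Ricci flat vertex.

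Step 3: once $x$ is Ricci flat, Theorem \ref{thm:BEcurvRF} directly yields $\K_\infty(x)\ge 0$, completing part (a). For part (b), the hypothesis $\kappa_{LLY}(x,y)=2/d$ is the maximal value attainable on an edge of a triangle-free $d$-regular graph, and the corresponding analysis of the Wasserstein plan for the lazy measures $\mu_x^p$ as $p\uparrow 1$ produces matchings with an extra symmetry; this extra symmetry upgrades the conclusion of Step~2 to the stronger variant ($(R)$- or $(RS)$-Ricci flatness) in Definition \ref{defn:ricci_flat}. The corresponding part of Theorem \ref{thm:BEcurvRF} then gives $\K_\infty(x)\ge 2$, while the matching upper bound $\K_\infty(x)\le 2$ for any vertex in a triangle-free graph follows by evaluating the Bakry-\'Emery form on a test function supported on $\{x\}\cup N(x)$. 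Hence $\K_\infty(x)=2$.

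The principal difficulty is Step~2: the matchings $\sigma_y$ attached to different edges at $x$ are a priori independent, while Ricci flatness demands a single coherent structure. The in-degree hypothesis is exactly what rules out the configurations preventing this coherence, and a careful case analysis of how $\sigma_y$ and $\sigma_{y'}$ interact through shared vertices of $S_2(x)$ will be needed.
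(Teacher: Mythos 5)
Your outline follows exactly the paper's route: extract perfect matchings $\sigma_y\colon S_1(x)\to S_1(y)$ from $\kappa_0(x,y)=0$ (this is Remark \ref{rem:curvcomb}), glue them into a Ricci-flat structure at $x$ using the in-degree bound (this is Theorem \ref{thm:curvRic}), and then apply Theorem \ref{thm:BEcurvRF} together with the upper bound $\K_\infty(x)\le 2$ for triangle-free vertices. Steps 1 and 3 are fine. The problem is that Step 2 --- which you yourself identify as ``the principal difficulty'' --- is the entire mathematical content of the theorem, and you do not actually carry it out: you assert that the bound $d_x^-(z)\le 2$ ``is precisely what is needed to assemble the family $\{\sigma_y\}$ into a single coherent system'' without exhibiting the assembly. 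Note also that condition (iii) of Definition \ref{defn:ricci_flat} is not automatic from having $d$ independent matchings; one must arrange $\sigma_i(y_j)=\sigma_j(y_i)$ for all $i\neq j$ (i.e.\ produce $(S)$-Ricci flatness), and this requires modifying the matchings.

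Concretely, the missing argument (as the paper gives it) runs as follows. One first normalizes the matchings so that $\sigma_i(y_i)=x$ whenever a perfect matching between $S_1(x)\setminus\{y_i\}$ and $S_1(y_i)\setminus\{x\}$ exists (Property (P)), and proves the key Fact that $\sigma_i(y_j)=x$ if and only if $y_i$ and $y_j$ have no common neighbour in $S_2(x)$; this already forces symmetry for all pairs sent to $x$. For a conflicting pair with $z=\sigma_i(y_j)\neq\sigma_j(y_i)=z'$, both $z,z'\in S_2(x)$ are links of $y_i,y_j$, and the hypothesis $d_x^-(z)\le 2$ forces $\sigma_j^{-1}(z)\in\{y_i,y_j\}$, hence $\sigma_j(y_j)=z$ and symmetrically $\sigma_i(y_i)=z'$, which permits the swap $\sigma_i(y_i):=z$, $\sigma_i(y_j):=z'$. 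Finally one must check that distinct conflicting pairs are disjoint (otherwise four links of $y_i$ with two other neighbours would need three preimages under $\sigma_i$), so all swaps can be performed simultaneously. For part (b) the additional matchings between $S_1(x)\setminus\{y_i\}$ and $S_1(y_i)\setminus\{x\}$ force $\sigma_i(y_i)=x$ for all $i$, which excludes the conflicting case entirely and yields reflexivity, whence $(R)$-Ricci flatness and $\K_\infty(x)\ge 2$. Without this chain of arguments your proposal is a correct strategy statement but not a proof.
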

It is an important remark here that $\kappa_0(x,y) = 0$, $\kappa_{LLY}(x,y) = \frac{2}{d}$, and $\K_\infty(x) = 2$ are the maximum possible values of curvature for a vertex $x$ not contained in a triangle. This curvature comparison result is proved by employing Ricci flatness, see Section \ref{sec:triangle-free}. At the end of the section, we provide also examples to show that all conditions of the
theorem are necessary.

In the final Section \ref{sec:distreg}, we show that the curvatures of
all \emph{distance-regular graphs of girth $4$} and vertex degree $d$
satisfy $\kappa_0 = 0$, $\kappa_{LLY} = \frac{2}{d}$ and
$\K_\infty = 2$ (see Theorem \ref{thm:distreg}). In other words, all
curvatures attain their maximal possible values for this interesting
family of triangle-free graphs.

\section{Curvature notions}
\label{sec:curvnotions}

All graphs $G=(V,E)$ with vertex set $V$ and edge set $E$ in this
paper are simple (that is, without loops and multiple edges),
undirected and connected, and we assume that the vertex degrees $d_x$
of all vertices $x \in V$ are finite. Moreover, all our graphs are
regular (that is $d_x = d$ for all $x \in V$) unless stated otherwise.
Balls and spheres are denoted by
\begin{eqnarray*}
  B_k(x) &:=& \{ z \in V: d(x,z) \le k \}, \\
  S_k(x) &:=& \{ z \in V: d(x,z) = k \},
\end{eqnarray*}
where $d: V \times V \to \N \cup \{ 0 \}$ is the combinatorial
distance function.

\subsection{Ollivier Ricci curvature}
\label{sec:curvOR}

We define the following probability distributions $\mu^p_x$ for any
$x\in V,\: p\in[0,1]$:
$$\mu_x^p(z)=\begin{cases}p,&\text{if $z = x$,}\\
\frac{1-p}{d_x},&\text{if $z\sim x$,}\\
0,& \mbox{otherwise.}\end{cases}$$

\begin{defn}[Transport plan and Wasserstein distance]
Given $G = (V,E)$, let $\mu_{1},\mu_{2}$ be two probability measures on $V$. A \emph{transport plan} $\pi$ transporting $\mu_1$ to $\mu_2$ is a function $\pi:V\times  V\rightarrow [0,\infty)$ satisfying the following marginal constraints
\begin{equation} \label{eq:marginalcond}
\mu_{1}(x)=\sum_{y\in V}\pi(x,y),\:\:\:\mu_{2}(y)=\sum_{x\in V}\pi(x,y).
\end{equation}
The \emph{cost} of a transport plan $\pi$ is given by
\begin{equation*} 
{\rm cost}(\pi)=\sum_{y\in V}\sum_{x\in V} d(x,y)\pi(x,y).
\end{equation*}
The set of all transport plans satisfying \eqref{eq:marginalcond} is
denoted by $\Pi(\mu_{1},\mu_{2})$.

The \emph{Wasserstein distance} $W_1(\mu_{1},\mu_{2})$ between $\mu_{1}$ and $\mu_{2}$ is then defined as
\begin{equation} \label{eq:W1def}
W_1(\mu_{1},\mu_{2}):=\inf_{\pi} {\rm cost}(\pi) = \inf_{\pi} \sum_{y\in V}\sum_{x\in V} d(x,y)\pi(x,y),
\end{equation}
where the infimum runs over all transport plans $\pi\in\Pi(\mu_{1},\mu_{2})$.
\end{defn}

\begin{remark}
  Note that every $\pi \in \Pi(\mu_1,\mu_2)$ satisfies $\pi(x,y) = 0$
  if $x \not\in \supp(\mu_1)$ or $y \not\in \supp(\mu_2)$. Therefore
  \eqref{eq:W1def} can be rewritten as
  $$ W_1(\mu_{1},\mu_{2})=\inf_{\pi}
  \sum_{y\in\supp(\mu_2)}\sum_{x\in \supp(\mu_1)} d(x,y)\pi(x,y).
  $$
\end{remark}

In other words, a transport plan $\pi$ moves a mass
distribution given by $\mu_1$ into a mass distribution given by
$\mu_2$, and $W_1(\mu_1,\mu_2)$ is a measure for the minimal effort
which is required for such a transition.

If $\mu_1$ and $\mu_2$ have finite supports, then there exists $\pi$ which attains the infimum in \eqref{eq:W1def}. We call such $\pi$ an {\it optimal transport plan} transporting $\mu_{1}$ to $\mu_{2}$.

\begin{defn}[Ollivier Ricci curvature] \label{defn:Ollcurv} The
  \emph{$p$-Ollivier Ricci curvature} \cite{Oll} on an edge
  $\{x,y\} \in E$ is
$$\kappa_{ p}(x,y)=1-W_1(\mu^{ p}_x,\mu^{ p}_y),$$
where $p\in [0,1]$ is called the \emph{idleness} parameter.

The Ollivier Ricci curvature introduced by Lin, Lu, and Yau \cite{LLY}, is
defined as
$$\kappa_{LLY}(x,y) = \lim_{ p\rightarrow 1}\frac{\kappa_{ p}(x,y)}{1- p}.$$
\end{defn}

It was shown in \cite[Lemma 2.1]{LLY} that the function $p \mapsto \kappa_p(x,y)$ is concave, which implies
\begin{equation} \label{eq:k0lessklly}
\kappa_p(x,y) \le \kappa_{LLY}(x,y) \quad \text{for all $p \in [0,1]$.}
\end{equation}
Moreover, we have the following relation for edges $\{x,y\}$ with
$d_x=d_y=d$ (see \cite{I}):
\begin{equation} \label{eq:LLYOllconn}
\kappa_{LLY}(x,y) = \frac{d+1}{d}\kappa_{\frac{1}{d+1}}(x,y).
\end{equation}

From the definition of the Wasserstein metric we can get an upper bound for $W_1$ by choosing a suitable transport plan. Using Kantorovich duality (see e.g. \cite[Ch. 5]{Vill09}), a fundamental concept in the optimal transport theory, we can approximate the opposite direction:

\begin{thm}[Kantorovich duality]\label{Kantorovich}
Given $G=(V,E)$, let $\mu_{1},\mu_{2}$ be two probability measures on $V$. Then
$$W_1(\mu_{1},\mu_{2})=\sup_{\substack{\phi:V\rightarrow \mathbb{R}\\ \phi\in {\rm \emph{$1$-Lip}}}}  \sum_{x\in V}\phi(x)(\mu_{1}(x)-\mu_{2}(x)),$$
where \emph{$1$-Lip} denotes the set of all $1$-Lipschitz functions. 
If $\phi \in$ \emph{$1$-Lip} attains the supremum we call it an \emph{optimal Kantorovich potential} transporting $\mu_{1}$ to $\mu_{2}$.
\end{thm}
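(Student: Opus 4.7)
The plan is to derive Kantorovich duality by recasting the Wasserstein problem as a finite-dimensional linear program, invoking strong LP duality, and then reducing the pair of dual potentials to a single $1$-Lipschitz function. In the setting relevant to this paper the measures $\mu_1,\mu_2$ are finitely supported, so the variables $\pi(x,y)$ range over the finite set $\supp(\mu_1)\times\supp(\mu_2)$ and the primal problem, namely the infimum in \eqref{eq:W1def}, is a bona fide finite linear program.

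First I would write its dual. Attaching Lagrange multipliers $\phi(x)$ to the first marginal constraint and $\psi(y)$ to the second, standard manipulations produce the dual LP
\[
\text{maximize}\quad \sum_{x}\phi(x)\mu_1(x)-\sum_{y}\psi(y)\mu_2(y)\quad\text{subject to}\quad \phi(x)-\psi(y)\le d(x,y)\text{ for all }x,y.
\]
Since the primal is feasible (take $\pi=\mu_1\otimes\mu_2$) and bounded below by $0$, strong LP duality yields that the primal and dual values coincide and that both extrema are attained, recovering in particular the existence of an optimal transport plan stated just before Definition \ref{defn:Ollcurv}.

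Second, I would reduce the dual to a single $1$-Lipschitz potential via the $d$-transform. Given any feasible pair $(\phi,\psi)$, set
\[
\tilde\phi(x):=\inf_{y\in\supp(\mu_2)}\bigl(\psi(y)+d(x,y)\bigr).
\]
The triangle inequality for $d$ yields $|\tilde\phi(x)-\tilde\phi(x')|\le d(x,x')$, so $\tilde\phi$ is $1$-Lipschitz on all of $V$. Dual feasibility of $(\phi,\psi)$ implies $\phi(x)\le\tilde\phi(x)$ for every $x$, while evaluating at $x=y\in\supp(\mu_2)$ gives $\tilde\phi(y)\le\psi(y)$. Hence replacing $(\phi,\psi)$ by $(\tilde\phi,\tilde\phi)$ preserves feasibility (the constraint becomes $\tilde\phi(x)-\tilde\phi(y)\le d(x,y)$, which is the Lipschitz condition) and can only increase the dual objective. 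Thus the dual supremum is unchanged if one restricts to pairs with $\phi=\psi$ and $\phi$ being $1$-Lipschitz, and combining with the duality identity from the previous step delivers the stated formula. Existence of an optimal Kantorovich potential follows because the dual objective depends only on the values of $\phi$ on the finite set $\supp(\mu_1)\cup\supp(\mu_2)$, on which the $1$-Lipschitz functions modulo additive constants form a compact set.

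The main obstacle, if one insists on a fully self-contained argument, is the strong LP duality step. In the finite-support setting relevant here this is a direct consequence of the Farkas lemma or any standard finite-dimensional LP duality theorem, so the argument is essentially routine; the more delicate version for general probability measures on Polish spaces requires a Hahn--Banach or measure-theoretic approximation argument, which is precisely the content of Villani \cite[Ch.~5]{Vill09} cited in the statement.
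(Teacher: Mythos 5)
Your argument is correct, but it is worth noting that the paper itself offers no proof of this statement: Theorem \ref{Kantorovich} is quoted as a classical result with a pointer to Villani \cite[Ch.~5]{Vill09}, so there is no ``paper proof'' to match against. What you supply is a genuine, essentially self-contained derivation in the finitely supported case: the reformulation of \eqref{eq:W1def} as a finite LP, strong LP duality, and the $d$-transform reduction $\tilde\phi(x)=\inf_{y}(\psi(y)+d(x,y))$ collapsing the dual pair to a single $1$-Lipschitz potential are all standard and correctly executed (the inequalities $\phi\le\tilde\phi$ everywhere and $\tilde\phi\le\psi$ on $\supp(\mu_2)$ do show the objective cannot decrease, and feasibility of $(\tilde\phi,\tilde\phi)$ is exactly the Lipschitz condition). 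The one caveat you should make explicit is scope: the theorem as stated allows arbitrary probability measures on a possibly infinite vertex set $V$ (the paper does use infinite graphs such as $\Z^d$ and $P_\infty$), and for measures with infinite support the finite LP machinery does not apply directly; one needs the Polish-space version from \cite{Vill09}, as you acknowledge. Since every application in the paper involves the finitely supported measures $\mu_x^p$, your proof covers all cases actually used, and it has the added benefit of re-deriving the existence of an optimal transport plan asserted just before Definition \ref{defn:Ollcurv}.
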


Note that both curvatures $\kappa_0(x,y)$ and $\kappa_{LLY}(x,y)$ of
an edge $\{x,y\}$ are already determined by the combinatorial
structure of the induced subgraph $B_2(x)$. (In fact, by symmetry
reasons, the combinatorial structure of the induced subgraph $B_2(x)
\cap B_2(y)$ is sufficient.)

As the relation $\kappa_0 \le \kappa_{LLY}$ is known from \eqref{eq:k0lessklly}, now we will prove the surprising fact that strict
positivity of $\kappa_{LLY}$ implies non-negativity of $\kappa_0$ (as stated in Theorem \ref{thm:kLLYk0comp} from the Introduction).

\begin{proof}[Proof of Theorem \ref{thm:kLLYk0comp}]
  Let $G=(V,E)$ be $d$-regular. Using the relation
  \eqref{eq:LLYOllconn}, it suffices to prove
  $$ \kappa_{\frac{1}{d+1}}(x,y) > 0 \quad \Longrightarrow \quad
  \kappa_0(x,y) \ge 0. $$
  Let $\{x,y\} \in E$ be an edge with
  $\kappa_{\frac{1}{d+1}}(x,y) > 0$. We define the following sets:
  \begin{eqnarray*}
    T_{xy} &:=& S_1(x) \cap S_1(y), \\
    V_x&:=& S_1(x) \backslash B_1(y), \\
    V_y &:=& S_1(y) \backslash B_1(x).
  \end{eqnarray*}
  In other words, $T_{xy}$ is the set of common neighbours of $x$ and $y$,
  $V_x$ is the set of neighbours of $x$ which have distance $2$ to $y$
  and, similarly, $V_y$ is the set of neighbours of $y$ which have distance $2$
  to $x$. 

We can choose an optimal transport plan $\pi_{opt} \in \Pi(\mu_x^{\frac{1}{d+1}},\mu_y^{\frac{1}{d+1}})$ with 
\begin{enumerate}
\item[i)] if $u\in T_{xy}\cup\{x\}\cup \{y\}$, then $\pi_{opt}(u,u)= \frac{1}{d+1}$, 
\item[ii)] if $u\in V_x$, then $\pi_{opt}(u,v)=\frac{1}{d+1}$ for exactly one $v\in V_y$ and 0 for others, 
\item[iii)] if $u\notin B_1(x)$, then  $\pi_{opt}(u,v)=0$ for $v \in V$. 
\end{enumerate}
The existence of an optimal transport plan satisfying (ii) (that is,
without splitting mass), follows from \cite[Theorem 1.1]{Brezis} (see also
\cite[p. 5]{Villani}). Moreover, this transport plan can be chosen to
satisfy (i) by \cite[Lemma 4.1]{I}. Note that (iii) holds for any transport plan in $\Pi(\mu_x^{\frac{1}{d+1}},\mu_y^{\frac{1}{d+1}})$.

In other words, the optimal transport plan does not move the mass
distributions at $x$, $y$ or $T_{xy}$, and for the vertices in $V_x$
it moves the mass distribution from one vertex completely to one
vertex in $V_y$.  Thus the optimal transport plan pairs the vertices
at $V_x$ and $V_y$. Let $u\in V_x$ and denote by $\tilde{u}$ the
unique vertex in $V_y$ for which
$\pi_{opt}(u,\tilde{u})=\frac{1}{d+1}$.

Let us then consider the Wasserstein distance. Using the optimal
transport plan we can write
\begin{equation} \label{eq:onemorethanWasserstein}
1>1-\kappa_\frac{1}{d+1}(x,y)= W_1(\mu_x^\frac{1}{d+1}, \mu_y^\frac{1}{d+1}) = \frac{1}{d+1}\sum_{u \in V_x} d(u,\tilde{u}).
\end{equation}
Note that $1\leq d(u_j,\tilde{u}_j)\leq 3$ for all $u_j\in V_x$. Let
$$ N_i := | \{ u \in V_x: d(u,\tilde u) = i \} | \quad \text{for $i \in \{1,2,3\}$.} $$
It follows from \eqref{eq:onemorethanWasserstein} that 
$d+1> \sum_{u \in V_x} d(u,\tilde{u})=N_1+2N_2+3N_3$,
which implies
\begin{equation} \label{eq:importantdistanceformula}
d \ge N_1+2N_2+3N_3.
\end{equation}
Now we distinguish three cases.

\medskip

Assume that $N_3 > 0$. Then there exists at least one vertex
$w \in V_x$ satisfying $d(w,\tilde{w})=3$. Let $\pi$ be a transport
plan from $\mu_x^0$ to $\mu_y^0$ such that $\pi(w,x)=\frac{1}{d}$,
$\pi(y,\tilde{w})=\frac{1}{d}$ and $\pi(u,\tilde{u})=\frac{1}{d}$ for
all other pairs $(u,\tilde{u})$ on the support of $\pi_{opt}$ except
$(w,\tilde{w})$. Using this transport plan and
\eqref{eq:importantdistanceformula}, we have
\begin{eqnarray*}
W_1(\mu_x^0,\mu_y^0)&\leq& \frac{1}{d}\left(2+N_1+2N_2+3(N_3-1)\right)\\
&\leq& \frac{d-1}{d} <1.
\end{eqnarray*}
Thus $\kappa_0(x,y)>0$.

\medskip

Next, we assume $N_3 = 0$ and $N_2 > 0$. Then there exists at least one vertex  $w \in V_x$ satisfying $d(w,\tilde{w})=2$, and we obtain, similarly as above,
\begin{eqnarray*}
W_1(\mu_x^0,\mu_y^0) &\leq& \frac{1}{d}(2+N_1+2(N_2-1))\\
                    &\leq& \frac{1}{d}(N_1+2N_2+3N_3) \leq 1,
\end{eqnarray*}
and therefore $\kappa_0(x,y)\geq 0$.

\medskip

Finally, if $N_2=N_3=0$, the optimal transport plan $\pi_{opt}$ defines a perfect matching between the sets $V_x$ and $V_y$, and therefore 
$$ W_1(\mu_x^0,\mu_y^0)\leq \frac{2+(N_1-1)}{d} = \frac{N_1+1}{d} \le 1,$$
since $N_1 = |V_x| \le d-1$, and again, $\kappa_0(x,y) \ge 0$, with equality if and only if $N_1=d-1$, which means $T_{xy} = \emptyset$.
\end{proof}

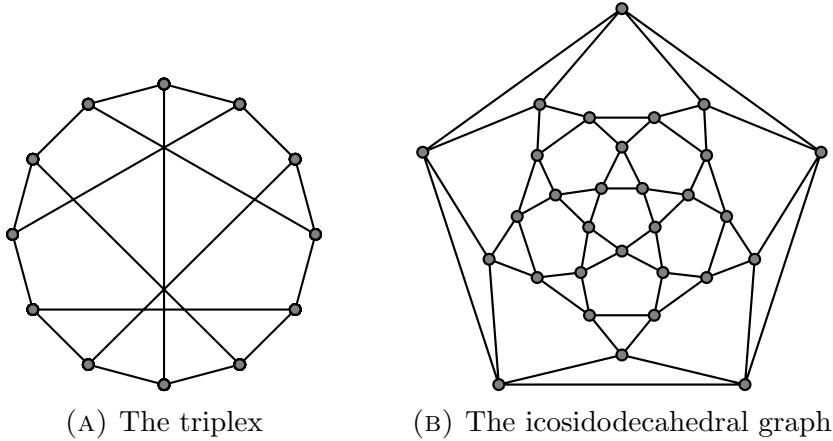
\begin{figure}[h!]
\begin{center}
\tikzstyle{every node}=[circle, draw, fill=black!50,
                        inner sep=0pt, minimum width=4pt]

    \begin{subfigure}[b]{0.35\textwidth}
        \centering
        \resizebox{\linewidth}{!}{
 \begin{tikzpicture}[thick,scale=1]%
 \draw \foreach \x in {0,30,...,330} {
        (\x:2)  -- (\x+30:2)    
		(90:2) node(x1){}
		(60:2) node(x2){}
		(30:2) node(x3){}
		(0:2) node(x4){}
		(-30:2) node(x5){}
		(-60:2) node(x6){}
		(120:2) node(x12){}
		(150:2) node(x11){}
		(180:2) node(x10){}
		(210:2) node(x9){}
		(240:2) node(x8){}
		(270:2) node(x7){}		
		};
 \draw (x1)--(x7) (x2)-- (x10) (x3)--(x8) (x4)--(x12) (x5)-- (x9) (x6)-- (x11);		
\end{tikzpicture}    
}                        
%\end{center}
\caption{The triplex}
\label{fig:triplex}
    \end{subfigure}
\qquad
    \begin{subfigure}[b]{0.45\textwidth}
        \centering
        \resizebox{\linewidth}{!}{
 \begin{tikzpicture}[thick,scale=.45]%
    \draw \foreach \x in {18,90,...,306} {
        (\x:6) node{} -- (\x+72:6)
        (\x:6) -- (\x+36:4) node{}
		(\x:6) -- (\x-36:4)
		(\x+36:4) -- (\x+18:3)
		(\x+36:4) -- (\x+54:3)
		(\x+18:3) node{} -- (\x-18:3) node{}
		(\x+54:3)  -- (\x+18:3) 
		(\x:2) -- (\x-18:3)
		(\x:2) -- (\x+18:3)
        (\x:2) node{} -- (\x+36:1) node{}
		(\x:2) -- (\x-36:1)
		(\x+36:1) -- (\x-36:1)
};
\end{tikzpicture}  
}                        
%\end{center}
\caption{The icosidodecahedral graph}
\label{fig:icosidodecahedron}
    \end{subfigure}
   \end{center}
   \caption{Examples of graphs with $\kappa_{LLY}=0$}
   \label{fig:OR_examples2}
\end{figure}

\begin{remark} \label{rem:ollcurvex}
  \ \\ 
  \noindent
  (a) The proof shows that $\kappa_{LLY}(x,y) > 0$ implies
  $\kappa_0(x,y) > 0$ in the following cases:
  \begin{enumerate}
  \item[(i)] $N_3 > 0$ or
  \item[(ii)] $N_3 = N_2 = 0$ and $\{x,y\}$ is contained in a triangle.
  \end{enumerate}

  \noindent
  (b) The hypercubes $Q^d$ satisfy $\kappa_{LLY}(x,y) = \frac{2}{d} > 0$
  and $\kappa_0(x,y) = 0$ for all edges $\{x,y\} \in E$.

  \smallskip
  
  \noindent
  (c) The triplex (see Figure \ref{fig:triplex}) satisfies
    $\kappa_{LLY}(x,y) = 0$ and $\kappa_0(x,y) = - \frac{1}{3} < 0$
    for all edges $\{x,y\} \in E$.

  \smallskip  
    
  \noindent  
  (d) The icosidodecahedral graph (see Figure
  \ref{fig:icosidodecahedron}) satisfies $\kappa_{LLY}(x,y) = 0$ and
  $\kappa_0(x,y) = 0$ for all edges $\{x,y\} \in E$. This implies that
  $\kappa_p(x,y) = 0$ for all $p \in [0,1]$. Graphs with this property
  in all edges are called \emph{bone-idle} (this notion was introduced
  in \cite{I}).

  \medskip

  \noindent
  The examples (b) and (c) show that the result in the theorem is sharp.
\end{remark}

We finish this subsection with the following upper curvature bounds for $\kappa_0$ and $\kappa_{LLY}$:

\begin{thm}[see {\cite[Theorem 4]{JL} and \cite[Proposition 2.7]{rigidity}}] \label{thm:uppbdLLY}
  Let $G = (V,E)$ be $d$-regular and $\{x,y\} \in E$. Then
  $$ \kappa_0(x,y) \le \frac{\#_\Delta(x,y)}{d}, $$
  and
  $$ \kappa_{LLY}(x,y) \le \frac{2+\#_\Delta(x,y)}{d}, $$
  where $\#_\Delta(x,y)$ is the number of triangles containing $\{x,y\}$.
\end{thm}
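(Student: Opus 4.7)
The plan is to derive both bounds directly from the transport-plan definition of $W_1$, by combining one universal lower bound on the cost with the fact that the two measures $\mu_x^p$ and $\mu_y^p$ have rather small overlap. The key elementary observation is that for any $\pi \in \Pi(\mu_1,\mu_2)$, since $d(u,v) \geq 1$ whenever $u \neq v$, one has
\begin{equation*}
{\rm cost}(\pi) \;\geq\; \sum_{u \neq v} \pi(u,v) \;=\; 1 - \sum_{u} \pi(u,u),
\end{equation*}
and the marginal constraints \eqref{eq:marginalcond} force $\pi(u,u) \leq \min(\mu_1(u),\mu_2(u))$ for every $u$. Taking the infimum over $\pi$ this yields the general inequality
\begin{equation*}
W_1(\mu_1,\mu_2) \;\geq\; 1 - \sum_u \min(\mu_1(u),\mu_2(u)),
\end{equation*}
and the remainder of the argument is bookkeeping about the common support of the two idle-random-walk measures on the edge $\{x,y\}$.

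For the first inequality I would apply the above to $\mu_1 = \mu_x^0$ and $\mu_2 = \mu_y^0$. Their supports $S_1(x)$ and $S_1(y)$ intersect precisely in the set of common neighbours of $x$ and $y$, of cardinality $\#_\Delta(x,y)$, and both measures equal $1/d$ at every common support point. Hence $\sum_u \min(\mu_x^0(u),\mu_y^0(u)) = \#_\Delta(x,y)/d$, so $W_1(\mu_x^0,\mu_y^0) \geq 1 - \#_\Delta(x,y)/d$, which gives $\kappa_0(x,y) \leq \#_\Delta(x,y)/d$.

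For $\kappa_{LLY}(x,y)$ I would apply the same general bound to $\mu_x^p$ and $\mu_y^p$ for $p$ sufficiently close to $1$. The common support of these two measures is now $\{x,y\}$ together with the $\#_\Delta(x,y)$ common neighbours; at each common neighbour both measures take the value $(1-p)/d$, while at $x$ (and symmetrically at $y$) the two masses are $p$ and $(1-p)/d$. Once $p \geq 1/(d+1)$ the smaller of these two is $(1-p)/d$, hence
\begin{equation*}
\sum_u \min(\mu_x^p(u),\mu_y^p(u)) \;=\; \bigl(\#_\Delta(x,y)+2\bigr)\,\frac{1-p}{d}.
\end{equation*}
This gives $\kappa_p(x,y) \leq (\#_\Delta(x,y)+2)(1-p)/d$, and dividing by $1-p$ and letting $p \to 1$ yields the second inequality by the definition of $\kappa_{LLY}$.

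There is no deep obstacle here; the one point requiring care is the second step, where one must restrict to $p$ close enough to $1$ so that the minima at $x$ and $y$ are indeed $(1-p)/d$ and not $p$. An alternative (dual) route would be to plug a carefully chosen $1$-Lipschitz function into Theorem~\ref{Kantorovich}, for instance a suitable truncation of $z \mapsto d(x,z)$, but the marginal-overlap argument outlined above seems the most transparent and works uniformly in $p$.
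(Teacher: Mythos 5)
Your proof is correct, and it is essentially the same argument the paper itself uses: the theorem is quoted from \cite{JL} and \cite{rigidity}, but the paper's proof of the triangle-free special case (Proposition \ref{prop:uppbdcurv}) is exactly your ``cost is at least one minus the diagonal mass, and the diagonal mass is at most the overlap $\sum_u\min(\mu_1(u),\mu_2(u))$'' bound, which you have simply extended to account for the $\#_\Delta(x,y)$ common neighbours. The only cosmetic difference is that you work at general $p$ near $1$ and pass to the limit, whereas the paper evaluates at $p=1/(d+1)$ and invokes \eqref{eq:LLYOllconn}; both are valid.
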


\subsection{Bakry-\'Emery curvature}
\label{sec:curvBE}

This curvature notion was first introduced by Bakry and \'Emery in
\cite{BE85} and was applied on graphs in \cite{Elworthy, LY, Schm99}. The definition of this curvature is based on the
curvature-dimension inequality \eqref{eq:curv-dim-ineq}, which is
equivalently rewritten as \eqref{eq:CDineq} below with the help of the
following $\Gamma$-calculus.

For any function $f: V\to \mathbb{R}$ and any
vertex $x\in V$, the (non-normalized) \emph{Laplacian} $\Delta$ is
defined via
\begin{equation*}
\Delta f(x):=\sum_{y:y\sim x}(f(y)-f(x)).
\end{equation*}

\begin{defn}[$\Gamma$ and $\Gamma_{2}$ operators]\label{defn:GammaGamma2}
Given $G=(V,E)$, we define for two functions $f,g: V\to \mathbb{R}$
\begin{align*}
2\Gamma(f,g)&:=\D(fg)-f\D g-g\D f;\\
2\Gamma_2(f,g)&:=\D\Gamma(f,g)-\Gamma(f,\D g)-\Gamma(\D f,g).
\end{align*}
We write $\Gamma(f):=\Gamma(f,f)$ and $\Gamma_2(f,f):=\Gamma_2(f)$, for short.
\end{defn}

\begin{defn}[Bakry-\'Emery curvature]\label{defn:BEcurvature} Given
  $G=(V,E)$, $\K\in \mathbb{R}$ and $\NN \in (0,\infty]$. We say that a
  vertex $x\in V$ satisfies the \emph{curvature-dimension inequality}
  $CD(\mathcal{K},\mathcal{N})$, if for any $f:V\to \mathbb{R}$, we
  have
\begin{equation}\label{eq:CDineq}
  \Gamma_2(f)(x)\geq \frac{1}{\NN}(\Delta f(x))^2+\K\Gamma(f)(x) \quad
  \text{for all $x \in V$.}
\end{equation}
We call $\K$ a lower Ricci curvature bound of $x$, and $\NN$ a dimension parameter. The graph $G=(V,E)$ satisfies $CD(\K,\NN)$ (globally), if all its vertices satisfy $CD(\K,\NN)$. At a vertex $x\in V$, let $\K(x,\NN)$ be the largest $\K$ such that (\ref{eq:CDineq}) holds for all functions $f$ at $x$ for a given $\NN$. We call $\K(x,\cdot)$ the \emph{Bakry-\'Emery curvature function} of $x$ and
we define
$$ \K_\infty(x) := \lim_{\NN \to \infty} \K(x,\NN). $$ 
\end{defn}

%It follows from the definition of the Bakry-\'Emery curvature function, that
%$K(x,\cdot)$ is already determined by the combinatorial structure of the
%two ball $B_2(x)$.

In this paper, we will restrict our considerations to the
curvature at $\infty$-dimension $\K_\infty: V \to \R$. Note that for the definition of 
$\K_\infty(x)$, the formula \eqref{eq:CDineq} simplifies to
$$ \Gamma_2(f)(x)\geq \K\Gamma(f)(x) \quad
\text{for all $x \in V$.} $$
The quadratic forms $\Gamma(\cdot,\cdot)(x)$ and $\Gamma_2(\cdot,\cdot)(x)$ can
be represented by matrices $\Gamma(x)$ and $\Gamma_2(x)$ as follows
\begin{eqnarray*}
  \Gamma(f,g)(x) &=& \underline{f} \Gamma(x) \underline{g}^\top, \\
  \Gamma_2(f,g)(x) &=& \underline{f} \Gamma_2(x) \underline{g}^\top,
\end{eqnarray*}
where $\underline{f},\underline{g}$ are the vector representations of
$f$ and $g$. The matrices $\Gamma(x), \Gamma_2(x)$ are symmetric with
non-zero entries only in $B_1(x)$ and $B_2(x)$, respectively. So we
can view them as local matrices by disregarding the vertices outside
$B_2(x)$. For the explicit matrix entries of $\Gamma(x)$ and
$\Gamma_2(x)$ see \cite[Subsections 2.2 and 2.3]{CLP}. Note that these
entries are already fully determined by the combinatorial structure of
the \emph{incomplete $2$-ball around $x$}, denoted by
$B^{\rm{inc}}_2(x)$, which is the induced subgraph of $B_2(x)$ with
all edges within $S_2(x)$ removed.

We have the following general upper curvature bound similar to Theorem 
\ref{thm:uppbdLLY}:

\begin{thm}[see {\cite[Corollary 3.3]{CLP}}] \label{thm:uppbdBE}
  Let $G=(V,E)$ be $d$-regular and $x \in V$. Then
  $$ \K_\infty(x) \le 2 + \frac{\#_\Delta(x)}{d}, $$
  where $\#_\Delta(x)$ is the number of triangles containing $x$.
\end{thm}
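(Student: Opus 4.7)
The strategy is to use the variational characterisation implicit in Definition \ref{defn:BEcurvature}: $\K_\infty(x)$ is the infimum of the Rayleigh quotient $\Gamma_2(f)(x)/\Gamma(f)(x)$ taken over all $f$ with $\Gamma(f)(x)>0$. Consequently any explicit test function $f$ yields an upper bound. Moreover, as emphasised in Subsection \ref{sec:curvBE}, both $\Gamma(f)(x)$ and $\Gamma_2(f)(x)$ depend only on the values of $f$ on $B^{\rm{inc}}_2(x)$, so I am free to prescribe $f$ locally.

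Motivated by the Riemannian setting, where Bochner's formula extracts $\Ric$ from distance-type functions, the candidate I would try is the truncated distance function
$$ f(x) = 0,\quad f \equiv 1 \text{ on } S_1(x),\quad f \equiv 2 \text{ on } S_2(x). $$
One has $\Gamma(f)(x) = d/2$ at once, and the remaining task is to evaluate $\Gamma_2(f)(x) = \tfrac{1}{2}\Delta\Gamma(f)(x) - \Gamma(f, \Delta f)(x)$.

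The plan for this second step is as follows. For each $y \in S_1(x)$, decompose the neighbours of $y$ into $\{x\}$, the $\#_\Delta(\{x,y\})$ common neighbours lying in $S_1(x)$, and the remaining $d^+(y) := d - 1 - \#_\Delta(\{x,y\})$ out-neighbours in $S_2(x)$. This partition immediately expresses $\Delta f(y)$ and $\Gamma(f)(y)$ as simple linear functions of $d^+(y)$. Summing over the $d$ neighbours of $x$ and applying the double-count identity $\sum_{y\sim x}\#_\Delta(\{x,y\}) = 2\#_\Delta(x)$, I expect $\Delta\Gamma(f)(x) = -\#_\Delta(x)$ and $\Gamma(f, \Delta f)(x) = -d - \#_\Delta(x)$, leading to $\Gamma_2(f)(x) = d + \#_\Delta(x)/2$ and hence the ratio $2 + \#_\Delta(x)/d$.

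The main obstacle is conceptual rather than computational: selecting the correct test function. The Riemannian analogy would suggest a squared distance, whereas on a graph it is the linear truncated distance above that turns out to be sharp. Once this choice is fixed, the computation decomposes cleanly into triangle-carrying and non-triangle contributions at each $y \in S_1(x)$, and the triangle count $\#_\Delta(x)$ surfaces with exactly the coefficient $1/d$ in the final ratio.
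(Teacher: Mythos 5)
Your proposal is correct, and the computation checks out: with $f=\min(d(x,\cdot),2)$ one gets $\Gamma(f)(x)=d/2$, $\Delta f(y)=d-2-\#_\Delta(x,y)$ and $\Gamma(f)(y)=\tfrac{1}{2}(d-\#_\Delta(x,y))$ for $y\sim x$, whence $\Delta\Gamma(f)(x)=-\#_\Delta(x)$ and $\Gamma(f,\Delta f)(x)=-d-\#_\Delta(x)$ via $\sum_{y\sim x}\#_\Delta(x,y)=2\#_\Delta(x)$, giving $\Gamma_2(f)(x)/\Gamma(f)(x)=2+\#_\Delta(x)/d$. The paper itself offers no proof (it only cites \cite[Corollary 3.3]{CLP}), and your distance-function test argument is precisely the standard route to that bound: the quantity you compute is the regular-graph specialisation of the bound $\K_\infty(x)\le\frac{3+d-{\rm av}_1^+(x)}{2}$ that the paper later invokes in Section \ref{sec:distreg}, since ${\rm av}_1^+(x)=d-1-\frac{2\#_\Delta(x)}{d}$ for $d$-regular graphs.
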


Let us finally return to the examples from the previous subsection.

\begin{remark} \label{rem:excurv}
  The examples in Remark \ref{rem:ollcurvex} have the following
  Bakry-\'Emery and Ollivier Ricci curvatures:

  \medskip
  
  \begin{tabular}{l|ccc}
    & $\kappa_0(x,y)$ & $\kappa_{LLY}(x,y)$ & $\K_\infty(x)$ \\
    \hline\\[-.2cm]
    Hypercube $Q^d$ & $0$ & $\frac{2}{d}$ & $2$ \\[.2cm]
    Triplex & $-\frac{1}{3}$ & $0$ & $-1$ \\[.2cm]
    Icosidodecahedral graph & $0$ & $0$ & $-\frac{3}{2}$
  \end{tabular}

  \medskip
  
  None of the regular graphs in the above table have curvature with
  opposite signs. We are not aware of any such examples and it would be
  interesting to find such graphs.
\end{remark}

%Maybe: Shrikhande and 4-Rook as curvature examples?
%\subsection{Example: Strongly regular graphs}

\section{Ricci flat graphs}

The notion of Ricci flat graphs was introduced in 1996 by Chung and
Yau \cite{ChY96} in connection to a logarithmic Harnack inequality and
is motivated by the structure of the $d$-dimensional grid $\Z^d$. Abelian Cayley graphs are prominent examples of Ricci flat graphs.

%Intro into Ricci flat graphs, discuss that they exhibit non-negative curvature both in the Bakry-\'Emery and the Ollivier Ricci sense, preservance under tensor products, Cartesian products and strong products.

\begin{defn} \label{defn:ricci_flat}
Let $G=(V,E)$ be a $d$-regular graph. We say that $x \in V$ is \emph{Ricci flat} if there exist maps
$\eta_i: B_1(x) \to V$ for $1 \le i \le d$ with the following properties:
\begin{itemize}
\item[(i)] $\eta_i(u) \sim u$ for all $u\in B_1(x)$,
\item[(ii)] $\eta_i(u) \neq \eta_j(u)$ if $i \neq j$,
\item[(iii)] $\bigcup_j \eta_j(\eta_i x)) = \bigcup_j \eta_i(\eta_j x)$ for all $i$.  
\end{itemize}
We also consider the following additional properties of the maps $\eta_i$:
\begin{itemize}
\item[(R)] Reflexivity: $\eta_i^2(x) = x$ for all $i$,
\item[(S)] Symmetry: $\eta_j(\eta_i x) = \eta_i(\eta_j x)$ for all $i,j$.
\end{itemize}
% Note that property (S) is stronger than property (iii).
If there exists a family of maps $\eta_i$ for a given vertex $x \in V$
satisfying property (R) or property (S) in addition to (i)-(iii), we
say that $x$ is $(R)$-Ricci flat or $(S)$-Ricci flat, respectively. If
there exists a family of maps $\eta_i$ satisfying (i)-(iii) and (R)
and (S) simultaneously, we say that $x$ is $(RS)$-Ricci flat.
\end{defn}

The $d$-dimensional grid $\Z^d$ is Ricci flat with the choices
$\eta_i(x) = x + e_i$. The following lemma is a useful observation for the study of Ricci flatness of concrete examples.

\begin{lemma} \label{lem:crucial}
  Assume a family of maps $\eta_i: B_1(x) \to V$ satisfies (i)-(iii)
  of the above definition. Then each of these maps $\eta_i$ is a bijective map
  between $B_1(x)$ and $B_1(\eta_i x)$.
\end{lemma}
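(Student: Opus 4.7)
My plan is to use $d$-regularity together with the numbering condition (ii) to identify both the $i$-varying images out of $x$ and those out of $y := \eta_i(x)$ with the full $1$-spheres $S_1(x)$ and $S_1(y)$, and then to extract the injectivity of $\eta_i|_{S_1(x)}$ from the set equality in (iii).

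First I would observe that by (i) each $\eta_j(x)$ is a neighbour of $x$, and by (ii) the $d$ vertices $\eta_1(x), \ldots, \eta_d(x)$ are pairwise distinct. Since $x$ has exactly $d$ neighbours, this forces
\[
\{\eta_j(x) : 1 \le j \le d\} \;=\; S_1(x).
\]
Applying the same reasoning at $y = \eta_i(x)$ gives $\{\eta_j(y) : 1 \le j \le d\} = S_1(y)$, a set of cardinality $d$.

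Next I would feed these two identifications into property (iii). The left-hand union in (iii) is exactly $S_1(y)$ by the previous paragraph, while the right-hand union can be rewritten as $\eta_i(S_1(x))$. So (iii) becomes the set equality $\eta_i(S_1(x)) = S_1(y)$. Because the left-hand side has cardinality $d$ and the right-hand side is the image of the $d$-element set $S_1(x)$ under $\eta_i$, the map $\eta_i|_{S_1(x)}$ must be injective, hence a bijection onto $S_1(y)$.

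Finally, I would combine this with $\eta_i(x) = y$ to finish. Since $G$ is simple we have $y \notin S_1(y)$, so the image $y$ is disjoint from $\eta_i(S_1(x)) = S_1(y)$, and $\eta_i$ delivers a bijection $B_1(x) = \{x\} \cup S_1(x) \to \{y\} \cup S_1(y) = B_1(y)$. I expect no substantial obstacle; the only subtle point is to read the unions in (iii) as sets of cardinality $d$, which is automatic once one knows by (ii) that the left-hand union has $d$ distinct entries and therefore forces the right-hand one to have no repetitions.
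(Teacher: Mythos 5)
Your proof is correct and follows essentially the same route as the paper: both identify $\bigcup_j \eta_j(u)$ with $S_1(u)$ via (i), (ii) and regularity, rewrite (iii) as $\eta_i(S_1(x)) = S_1(\eta_i x)$, and deduce injectivity from the cardinality count. The only cosmetic difference is that you split off the point $x \mapsto \eta_i(x)$ separately (using simplicity to see $\eta_i(x) \notin S_1(\eta_i x)$), whereas the paper counts on the whole ball $B_1(x)$ at once.
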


\begin{proof}
  Assume that the family $\eta_i$ satisfies (i)-(iii). It follows
  immediately from (i) and (ii) and regularity that
  $$ \bigcup_j \eta_j(u) = S_1(u) \quad \text{for all $u \in B_1(x)$.} $$
  This implies that (iii) is equivalent to
  $$
  S_1(\eta_i x) = \eta_i(S_1(x)) \quad \text{for all $i$,}
  $$
  which, in turn, implies
  \begin{equation} \label{eq:rangeetai}
  B_1(\eta_i x) = S_1(\eta_i x) \cup \{ \eta_i x\} = \eta_i(S_1(x)) \cup
\eta_i(\{x\}) = \eta_i(B_1(x)).
  \end{equation}
  Therefore, each map $\eta_i$ must be injective, since
  $$ |\eta_i(B_1(x))| = |B_1(\eta_i x)| = |B_1(x)|. $$
  Bijectivity from $B_1(x)$ to $B_1(\eta x)$ follows immediately from \eqref{eq:rangeetai}.
\end{proof}

Note that all Ricci flatness properties at a vertex $x$ can be
determined from the combinatorial structure of the incomplete $2$-ball
$B^{\rm{inc}}_2(x)$ around $x$, which was introduced in Subsection
\ref{sec:curvBE}.

\begin{example} To help readers familiarize with the notion of Ricci flatness, we provide three examples of graphs and check whether each of them is Ricci flat.
\begin{itemize}
  \item[(a)] The incomplete $2$-ball in Figure \ref{fig:not_RF} with
  $S_1(x) = \{v_1,v_2,v_3\}$, $v_1 \sim v_2$ and
  $S_2(x) = \{v_4,v_5,v_6\}$, $v_4 \sim v_1$, $v_5 \sim v_2,v_3$ and
  $v_6 \sim v_3$ is not Ricci flat:

\begin{figure}[h!]
\begin{center}
  \tikzstyle{every node}=[circle, draw, fill=black!50,inner sep=0pt, minimum width=4pt]
  \begin{tikzpicture}[thick,scale=1]%
        \draw (0,0) node(x0)[label=above:$x$] {};
        \draw (-2,-1.5) node(y1)[label=left:$v_1$] {};
        \draw (0,-1.5) node(y2)[label=right:$v_2$] {};
        \draw (2,-1.5) node(y3)[label=right:$v_3$] {};
        \draw (-3,-3) node(z4)[label=left:$v_4$] {};
        \draw (1,-3) node(z5)[label=left:$v_5$] {};
        \draw (3,-3) node(z6)[label=right:$v_6$] {};
        \draw (x0)--(y1)--(y2)--(x0);
        \draw (x0)--(y3)--(z6);
        \draw (y1)--(z4);
        \draw (y2)--(z5)--(y3);
\end{tikzpicture}  
\end{center}
\caption{Graph that is not Ricci flat}
\label{fig:not_RF}
\end{figure}
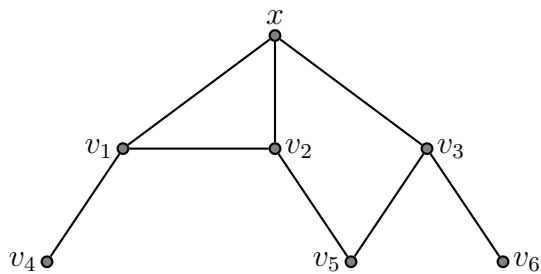

We show this by contradiction. Assume $\eta_i: B_1(x) \to V$ with
properties (i)-(iii) exist. Without loss of generality, we can assume
$\eta_i(x) = v_i$.  Note that we must have
$\eta_i(v_j) \in S_1(v_i) \cap S_1(v_j)$ for $1 \le i,j \le d$. This
implies that we have the following choices for our maps $\eta_j$:
  \begin{center}
  \begin{tabular}{c|c|c|c|c}
    & $x$ & $v_1$ & $v_2$ & $v_3$ \\
    \hline
    $\eta_1$ & $v_1$ & $x,v_2,v_4$ & $x$ & $x$ \\
    $\eta_2$ & $v_2$ & $x$ & $x,v_1,v_5$ & $x,v_5$ \\
    $\eta_3$ & $v_3$ & $x$ & $x,v_5$ & $x,v_5,v_6$
   \end{tabular}
   \end{center}
   Such a table can be presented concisely with the
   help of a $d \times d$ matrix $A$, namely, $A = (A_{ij})$ defined as follows:
   Let $S_1(x) = \{v_1,\dots, v_d\}$ where $v_j := \eta_j(x)$, and
   $S_2(x) =: \{v_{d+1},\dots,v_t\}$ and, furthermore, $v_0 := x$.  Then
   the entries $A_{ij} \in \{0,1,\dots,t\}$ of $A$ are given via the
   relation
   $$ v_{A_{ij}} = \eta_i(v_j). $$
   Then the table translates into the following possibilities for the entries
   of $A$:
   $$ \begin{pmatrix} 0,2,4 & 0 & 0 \\
     0 & 0,1,5 & 0,5 \\
     0 & 0,5 & 0,5,6 \end{pmatrix}. $$
   The conditions (i)-(iii) require that all columns and rows of $A$
   have non repeating entries. Obviously, this is not possible in this
   case. Henceforth, we will use this matrix notation to simplify matters.
   \medskip
   \item[(b)] The graph $K_{3,3}$: Let $S_1(x) = \{v_1,v_2,v_3\}$ and
   $S_2(x) = \{v_4,v_5\}$ with $v_4,v_5 \sim v_1,v_2,v_3$. We have the
   following possibilities for the entries of the associated matrix $A$:
   $$ \begin{pmatrix} 0,4,5 & 0,4,5 & 0,4,5 \\
     0,4,5 & 0,4,5 & 0,4,5 \\
     0,4,5 & 0,4,5 & 0,4,5
   \end{pmatrix}.
   $$
   Note that $(R)$-Ricci flatness requires existence of an associated
   matrix $A$ with vanishing diagonal and $(S)$-Ricci flatness
   requires existence of a symmetric matrix $A$. Therefore, $x$ is
   $(R)$- and $(S)$-Ricci flat by the following matrix choices:
   $$ A_R = \begin{pmatrix} 0 & 4 & 5 \\
     5 & 0 & 4 \\
     4 & 5 & 0 \end{pmatrix}, \qquad A_S = \begin{pmatrix} 0 & 4 & 5 \\
     4 & 5 & 0 \\
     5 & 0 & 4 \end{pmatrix}.
   $$
   Note that $x$ is not $(RS)$-Ricci flat since both properties
   (vanishing diagonal and symmetry) cannot be satisfied at the same
   time. In fact, the complete bipartite graphs $K_{d,d}$ are both
   $(R)$- and $(S)$-Ricci flat for all $d$, and $(RS)$-Ricci flat if
   and only if $d$ is even (see the Appendix).
   \medskip
   \item[(c)] Shrikhande graph: Cayley graph $\Z_4 \times Z_4$ with the generator set $\{\pm(0,1), \pm(1,0), \pm(1,1)\}$. It is a strongly
   regular graph (see \cite[pp. 125]{BH}). The structure of the
   incomplete $2$-ball $B_2^{\rm{inc}}(x)$ around any vertex $x$ is
   given in Figure \ref{fig:Shrikhande}. We have the following
   possibilities for the entries of the associated matrix $A$:
   $$ \scriptsize{\begin{pmatrix} 0,2,6,7,12,15 & 0,7 & 0,2 & 0,12 & 0,6 & 0,15 \\
       0,7 & 0,1,3,7,8,13 & 0,8 & 0,3 & 0,13 & 0,1 \\
       0,2 & 0,8 & 0,2,4,8,9,14 & 0,9 & 0,4 & 0,14 \\
       0,12 & 0,3 & 0,9 & 0,3,5,9,10,12 & 0,10 & 0,5 \\
       0,6 & 0,13 & 0,4 & 0,10 & 0,4,6,10,11,13 & 0,11 \\
       0,15 & 0,1 & 0,14 & 0,5 & 0,11 & 0,1,5,11,14,15
   \end{pmatrix}}.
 $$
 Choosing $0$ for diagonal entries fixes all other entries of the matrix.
 Moreover, this choice leads to a symmetric matrix, which shows that
 $x$ is $(RS)$-Ricci flat.

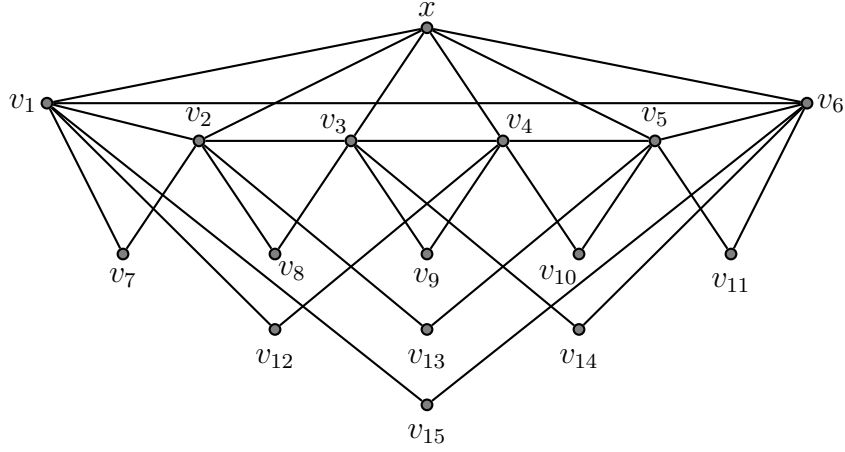
\begin{figure}[h!]
\begin{center}
\tikzstyle{every node}=[circle, draw, fill=black!50,
                        inner sep=0pt, minimum width=4pt]
 \begin{tikzpicture}[thick,scale=1]%
        \draw (0,0) node(x0)[label=above:$x$] {};
        \draw (-5,-1) node(v1)[label=left:$v_1$] {};
        \draw (-3,-1.5) node(v2)[label=above:$v_2$] {};
        \draw (-1,-1.5) node(v3)[label=above left:$v_3$] {};
        \draw (1,-1.5) node(v4)[label=above right:$v_4$] {};
        \draw (3,-1.5) node(v5)[label=above:$v_5$] {};
        \draw (5,-1) node(v6)[label=right:$v_6$] {};
        \draw (v1)--(v2)--(v3)--(v4)--(v5)--(v6)--(v1);
        \draw (x0)--(v1) (x0)--(v2) (x0)--(v3) (x0)--(v4) (x0)--(v5) (x0)--(v6);
        \draw (-4,-3) node(v7)[label=below:$v_7$] {};
        \draw (-2,-3) node(v8)[label=below right:$v_8$] {};
        \draw (0,-3) node(v9)[label=below:$v_9$] {};
        \draw (2,-3) node(v10)[label=below left:$v_{10}$] {};
        \draw (4,-3) node(v11)[label=below:$v_{11}$] {};
        \draw (v1)--(v7)--(v2);
        \draw (v2)--(v8)--(v3);
        \draw (v3)--(v9)--(v4);
        \draw (v4)--(v10)--(v5);
        \draw (v5)--(v11)--(v6);
        \draw (-2,-4) node(v12)[label=below:$v_{12}$] {};
        \draw (0,-4) node(v13)[label=below:$v_{13}$] {};
        \draw (2,-4) node(v14)[label=below:$v_{14}$] {};
        \draw (v1)--(v12)--(v4);
        \draw (v2)--(v13)--(v5);
        \draw (v3)--(v14)--(v6);
        \draw (0,-5) node(v15)[label=below:$v_{15}$] {};
        \draw (v1)--(v15)--(v6);
\end{tikzpicture}  
\end{center}
\caption{The incomplete $2$-ball $B_2^{\rm{inc}}(x)$ of the Shrikhande
  graph}
\label{fig:Shrikhande}
\end{figure}
\end{itemize}  
\end{example}

\subsection{Ricci flatness and Ollivier Ricci curvature}

With regards to Ollivier Ricci curvature we have the following general
implications:

\begin{thm} \label{thm:ORcurvRF}
Let $G = (V,E)$ be $d$-regular.
\begin{itemize}
\item[(a)] If $x \in V$ is Ricci flat then $\kappa_0(x,y) \ge 0$ for all
  edges $\{x,y\} \in E$. 
\item[(b)] If $x \in V$ is $(R)$-Ricci flat then
  $\kappa_{LLY}(x,y) \ge \frac{2}{d}$ for all edges $\{x,y\} \in E$.
\end{itemize}
\end{thm}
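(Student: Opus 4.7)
The plan is to construct explicit transport plans built from the Ricci flat maps $\eta_i:B_1(x)\to V$, producing upper bounds on the relevant Wasserstein distances. Throughout, fix an edge $\{x,y\}\in E$ and pick an index $k\in\{1,\dots,d\}$ with $y=\eta_k(x)$. By Lemma~\ref{lem:crucial}, $\eta_k$ is a bijection from $B_1(x)$ onto $B_1(y)$, and from the proof of that lemma I also record the identity $S_1(y)=\bigcup_j\eta_j(\eta_k(x))=\bigcup_j\eta_k(\eta_j(x))$, which follows from properties (i)--(iii) together with regularity.

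For part (a), I would define the transport plan $\pi(\eta_j(x),\eta_k(\eta_j(x)))=\tfrac{1}{d}$ for $j=1,\dots,d$, transporting $\mu_x^0$ to $\mu_y^0$. The first marginals are immediate because the $\eta_j(x)$ are distinct by (ii). For the second marginals, the map $j\mapsto\eta_k(\eta_j(x))$ is surjective onto $S_1(y)$ by the identity just noted and hence bijective by cardinality, so each vertex of $S_1(y)$ receives exactly mass $\tfrac{1}{d}$. Property (i) guarantees $\eta_k(\eta_j(x))\sim\eta_j(x)$, so every transport step has length $1$, giving $W_1(\mu_x^0,\mu_y^0)\le 1$ and therefore $\kappa_0(x,y)\ge 0$.

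For part (b), the $(R)$-property supplies $\eta_k(y)=x$, i.e.\ $\eta_k$ swaps $x$ and $y$. Working at idleness $p\in[\tfrac{1}{d+1},1)$, I would use the plan from $\mu_x^p$ to $\mu_y^p$ that moves mass $p-\tfrac{1-p}{d}$ from $x$ to $y$, leaves mass $\tfrac{1-p}{d}$ at each of $x$ and $y$, and moves mass $\tfrac{1-p}{d}$ from each $v\in S_1(x)\setminus\{y\}$ to $\eta_k(v)$. Reflexivity forces $\eta_k$ to carry $S_1(x)\setminus\{y\}$ bijectively onto $S_1(y)\setminus\{x\}$, so every marginal requirement of $\mu_y^p$ is met exactly. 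Each non-trivial transport step costs at most $1$ by property (i), so tallying gives
\[ W_1(\mu_x^p,\mu_y^p)\le\Bigl(p-\tfrac{1-p}{d}\Bigr)+(d-1)\tfrac{1-p}{d}=p+\tfrac{(d-2)(1-p)}{d}, \]
whence $\kappa_p(x,y)\ge\tfrac{2(1-p)}{d}$. Dividing by $1-p$ and letting $p\to 1$ then yields $\kappa_{LLY}(x,y)\ge\tfrac{2}{d}$ via Definition~\ref{defn:Ollcurv}.

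The main point requiring care is the second-marginal check in (b): without the reflexivity condition $\eta_k(y)=x$, the bijection $\eta_k$ need not send $S_1(x)\setminus\{y\}$ into $S_1(y)\setminus\{x\}$, and the naive plan would then overload some common neighbours while underfilling parts of $V_y$. The $(R)$-hypothesis is exactly what rules this out. Everything else, including the first-marginal verification and the edge-by-edge cost bookkeeping, follows routinely from the Ricci flatness axioms (i)--(iii).
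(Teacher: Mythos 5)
Your proof is correct and follows essentially the same route as the paper: both parts construct explicit transport plans from the Ricci flat maps $\eta_i$, using that each $\eta_i$ restricts to a bijection from $S_1(x)$ onto $S_1(\eta_i x)$ (and, for (b), that reflexivity forces $\eta_k$ to swap $x$ and $y$ and hence to match $S_1(x)\setminus\{y\}$ with $S_1(y)\setminus\{x\}$). The only cosmetic difference is in (b), where you bound $\kappa_p(x,y)$ for every $p\in[\tfrac{1}{d+1},1)$ and pass to the limit in the definition of $\kappa_{LLY}$, whereas the paper evaluates only at $p=\tfrac{1}{d+1}$ and invokes the identity $\kappa_{LLY}(x,y)=\tfrac{d+1}{d}\kappa_{1/(d+1)}(x,y)$; at $p=\tfrac{1}{d+1}$ your transport plan coincides with theirs.
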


\begin{proof}
  For the proof of (a) we assume Ricci flatness at $x$ with
  corresponding maps $\eta_i: B_1(x) \to V$. Let $y \in S_1(x)$. Recall that
  $$ S_1(x) = \{ \eta_1(x),\dots,\eta_d(x) \}. $$
  Therefore, we have $y = \eta_i(x)$ for some $i \in
  \{1,\dots,d\}$. We choose the following transport plan:
  $$ \pi(u,\eta_i(u)) = \frac{1}{d} \quad \text{for all $u \in S_1(x)$,} $$
  and $\pi(u,v) = 0$ for all other combinations. This implies
  $$ \sum_{v \in V} \pi(u,v) = \pi(u,\eta_i(u)) = \frac{1}{d} = \mu_x^0(u) \quad \text{for all $u \in S_1(x)$, }$$
  and (using Lemma \ref{lem:crucial}) 
  $$ \sum_{u \in V} \pi(u,v) = \pi(\eta_i^{-1}(v),v) = \frac{1}{d} = \mu_y^0(v) \quad \text{for all $v \in S_1(y)$,} $$
  which shows that $\pi \in \Pi(\mu_x^0,\mu_y^0)$. This leads to
  $$ W_1(\mu_x^0,\mu_y^0) \le {\rm{cost}}(\pi) = \sum_{u \in S_1(x)} \pi(u,\eta_i(u)) = 1, $$
  which implies $\kappa_0(x,y) \ge 0$.

  We prove (b) similarly. Assume $x$ is $(R)$-Ricci flat with
  corresponding maps $\eta_i$ and $y = \eta_i(x)$. Note that we have
  $\eta_i(y) = x$ from reflexivity. This time, we
  choose the following transport plan
  $\pi \in \Pi(\mu_x^{1/(d+1)},\mu_y^{1/(d+1)})$:
  $$ \pi(u,\eta_i(u)) = \frac{1}{d+1} \quad \text{for all $u \in S_1(x) \backslash \{y\}$,} $$
  $\pi(x,x) = \pi(y,y) = \frac{1}{d+1}$, and $\pi(u,v) = 0$ for all
  other combinations. This leads to
  $$ W_1(\mu_x^{1/(d+1)},\mu_y^{1/(d+1)}) \le {\rm{cost}}(\pi) = \sum_{u \in S_1(x) \backslash \{y\}} \pi(u,\eta_i(u)) = \frac{d-1}{d+1}, $$
  which implies $\kappa_{1/(d+1)}(x,y) \ge \frac{2}{d+1}$ and
  $$ \kappa_{LLY}(x,y) = \frac{d+1}{d} \kappa_{1/(d+1)}(x,y) \ge \frac{2}{d}. $$
\end{proof}

\subsection{Ricci flatness and Bakry-\'Emery curvature}

With regards to Bakry-\'Emery curvature we have the following general
implications:

\begin{thm} \label{thm:BEcurvRF}
Let $G = (V,E)$ be $d$-regular.
\begin{itemize}
\item[(a)] If $x \in V$ is Ricci flat then $\K_\infty(x) \ge 0$. 
\item[(b)] If $x \in V$ is $(R)$-Ricci flat then $\K_\infty(x) \ge 2$.
\end{itemize}
\end{thm}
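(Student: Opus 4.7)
Fix an arbitrary $f: V \to \R$; the goal is to prove $\Gamma_2(f)(x) \ge \K \Gamma(f)(x)$ with $\K = 0$ in (a) and $\K = 2$ in (b). I would work directly with the Ricci flatness maps $\eta_1,\ldots,\eta_d$ at $x$, using Lemma \ref{lem:crucial} which guarantees that each $\eta_i$ is a bijection $B_1(x)\to B_1(\eta_i x)$. Setting
\[
a_j := f(\eta_j x) - f(x), \qquad b_{ij} := f(\eta_j\eta_i x) - f(\eta_i x)
\]
for $1\le i,j\le d$, one has $\Delta f(x)=\sum_j a_j$, $2\Gamma(f)(x)=\sum_j a_j^2$, and analogously $\Delta f(\eta_i x)=\sum_j b_{ij}$, $2\Gamma(f)(\eta_i x)=\sum_j b_{ij}^2$. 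Inserting these into $2\Gamma_2(f)(x)=\Delta\Gamma(f)(x)-2\Gamma(f,\Delta f)(x)$, together with the standard identity $2\Gamma(f,g)(x)=\sum_{y\sim x}(f(y)-f(x))(g(y)-g(x))$, yields the basic expansion
\begin{equation}\label{eq:planexp}
2\Gamma_2(f)(x) = \tfrac12\sum_{i,j} b_{ij}^2 - \tfrac{d}{2}\sum_j a_j^2 - \sum_{i,j}a_i b_{ij} + \Bigl(\sum_i a_i\Bigr)^2.
\end{equation}

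For part (a) my plan is to reproduce the Chung--Yau argument from \cite{ChY96}. Property (iii) provides, for each fixed $i$, a permutation $\sigma_i\in S_d$ with $\eta_{\sigma_i(j)}(\eta_i x)=\eta_i(\eta_j x)$. Swapping the roles of $i$ and $j$ in the definition of $b_{ij}$, the quantity $f(\eta_i\eta_j x)-f(\eta_j x)$ is simply $b_{ji}$, and telescoping then gives the transposition identity
\[
b_{i,\sigma_i(j)} = b_{ji} + (a_j - a_i).
\]
Using this identity to relabel one copy of $\sum_{i,j} b_{ij}^2$ and pairing corresponding terms, the right-hand side of \eqref{eq:planexp} collapses into a manifest sum of squares plus a positive multiple of $(\Delta f(x))^2$; in fact the argument proves the stronger inequality $\Gamma_2(f)(x)\ge \frac{1}{d}(\Delta f(x))^2$, i.e.\ $CD(0,d)$, which taking $\NN\to\infty$ in the curvature function of Definition \ref{defn:BEcurvature} gives $\K_\infty(x)\ge 0$.

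For part (b), reflexivity $\eta_i^2 x = x$ pins down the diagonal: $b_{ii} = f(x) - f(\eta_i x) = -a_i$, so $\sum_i b_{ii}^2 = \sum_i a_i^2 = 2\Gamma(f)(x)$ and $\sum_i a_i b_{ii} = -\sum_i a_i^2 = -2\Gamma(f)(x)$. Separating the diagonal $j=i$ contributions from \eqref{eq:planexp} and completing squares in the remaining terms produces
\[
2\Gamma_2(f)(x) = (2-d)\sum_i a_i^2 + \tfrac12\sum_{i\ne j}(b_{ij}-a_i)^2 + \Bigl(\sum_i a_i\Bigr)^2.
\]
The part (a) symmetrization now applies to the off-diagonal piece $\sum_{i\ne j}(b_{ij}-a_i)^2$: under $(R)$ the permutation $\sigma_i(i)$ must be the unique index $k$ with $\eta_k(\eta_i x)=x$, so reflexivity forces $\sigma_i(i)=i$ and hence $\sigma_i$ restricts to a permutation of $\{k\ne i\}$. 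Running the same symmetrization as before on this off-diagonal part yields a non-negative lower bound large enough to absorb $(d-2)\sum_i a_i^2$ and to leave a surplus of $2\sum_i a_i^2 = 2\cdot 2\Gamma(f)(x)$, giving $\Gamma_2(f)(x)\ge 2\Gamma(f)(x)$.

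The main obstacle is the symmetrization in part (a): since the permutations $\sigma_i$ depend on $i$, it is not transparent that the mixture of $\sum_{i,j}b_{ij}^2$ and $\sum_{i,j}a_i b_{ij}$ in \eqref{eq:planexp} collapses cleanly into a sum of squares after relabeling, and careful index bookkeeping is required to avoid sign errors. Once this identity is in place, part (b) then follows via the clean algebraic separation of the diagonal described above, whose compatibility with the off-diagonal estimate hinges precisely on the fixed-point property $\sigma_i(i)=i$ supplied by reflexivity.
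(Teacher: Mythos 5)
Your plan is correct and, at its core, is the same Chung--Yau computation that the paper uses: expand $2\Gamma_2(f)(x)=\Delta\Gamma(f)(x)-2\Gamma(f,\Delta f)(x)$ through the maps $\eta_i$, use property (iii) to reindex, collapse to a sum of squares, and let reflexivity control the diagonal. Your basic expansion is right, and the symmetrization via $b_{i,\sigma_i(j)}=b_{ji}+a_j-a_i$ does make it collapse --- the clean endpoint is the identity $2\Gamma_2(f)(x)=\tfrac{1}{2}\sum_{i,j}\left(b_{ij}-a_j\right)^2$, which is exactly what the paper obtains more directly by applying (iii) inside $\Delta f(\eta_j x)$ before expanding. (Note this is a pure sum of squares with no separate $(\Delta f(x))^2$ term; your $CD(0,d)$ aside is true but needs one further step, namely Cauchy--Schwarz over the ``return'' indices $\tau(i)$ defined by $\eta_{\tau(i)}\eta_i x=x$, which form a permutation by the injectivity in Lemma \ref{lem:crucial}.) Where your route genuinely diverges, and where the only real gap sits, is part (b): the paper simply restricts the sum-of-squares identity to its diagonal, where $(R)$ gives $b_{ii}=-a_i$, hence $2\Gamma_2(f)(x)\ge\tfrac{1}{2}\sum_i(2a_i)^2=4\Gamma(f)(x)$ at once. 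You instead separate the diagonal from the raw expansion and are then left needing the off-diagonal bound $\sum_{i\ne j}(b_{ij}-a_i)^2\ge\sum_{i\ne j}(a_i-a_j)^2$ (equivalently, enough to cover $d\sum_i a_i^2-(\sum_i a_i)^2$), which you assert (``large enough to absorb\dots'') but do not prove. It does hold: since $\sigma_i$ fixes $i$, symmetrizing over $\{j\ne i\}$ gives $\sum_{i\ne j}(b_{ij}-a_i)^2=\tfrac{1}{2}\sum_{i\ne j}\bigl[(b_{ij}-a_i)^2+(b_{ij}+a_i-2a_j)^2\bigr]$, and $u^2+v^2\ge\tfrac{1}{2}(u-v)^2$ finishes it. You should either supply this estimate or switch to the diagonal shortcut, which the collapsed identity from your own part (a) already hands you.
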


\begin{proof}
The proof of statement (a) was already explained in \cite{ChY96} and \cite{LY}. This proof stategy can also be applied to prove statement (b). We present these proofs for the reader's convenience.

Recall from the definition that
\begin{equation} \label{eq:Gamma2} 
2 \Gamma_2(f,f)(x) = \Delta \Gamma(f,f)(x) - 2\Gamma(f,\Delta f)(x). 
\end{equation}
and
\begin{equation*}
2 \Gamma(f,g)(x) = \Delta(fg)(x) - f(x)\Delta g(x)- g(x)\Delta f(x). 
\end{equation*}
A useful identity to compute $\Gamma(f,g)$ is
\begin{equation*}
2 \Gamma(f,g)(x) =\sum_{y:y\sim x} (f(y)-f(x))(g(y)-g(x)). 
\end{equation*}

Let us now consider the first term on the RHS in \eqref{eq:Gamma2} and use the identity $A^2 - B^2 = (A-B)^2 + 2B(A-B)$:
\begin{eqnarray*}
\Delta \Gamma(f,f)(x) &=& \sum_{i=1}^d \left( \Gamma(f,f)(\eta_ix) - \Gamma(f,f)(x) \right) \\
&=& \frac{1}{2} \sum_{i=1}^d \left[ \sum_{j=1}^d \left( f(\eta_j \eta_i x) - f(\eta_i x) \right)^2 - \sum_{j=1}^d  \left( f(\eta_jx) - f(x) \right)^2  \right] \\
&=& \sum_{i=1}^d \sum_{j=1}^d \left( f(\eta_j \eta_i x) - f(\eta_i x) - f(\eta_j x) + f(x) \right)^2 \\
&& + \sum_{i=1}^d \sum_{j=1}^d \left( f(\eta_j x) - f(x) \right) \left( f(\eta_j \eta_i x) - f(\eta_i x) - f(\eta_j x) + f(x) \right).
\end{eqnarray*} 
On the other hand, we have for the second term on the RHS of \eqref{eq:Gamma2}, using Ricci flatness,
\begin{eqnarray*}
- 2 \Gamma(f,\Delta f)(x) &=& - \sum_{j=1}^d \left( f(\eta_j x) -f(x) \right) \left( \Delta f(\eta_j x) - \Delta f(x) \right) \\
&=& - \sum_{j=1}^d \sum_{i=1}^d  \left( f(\eta_j x) -f(x) \right) \left( f(\eta_i \eta_j x) - f(\eta_j x) - f(\eta_i x) + f(x)  \right) \\
&=& - \sum_{j=1}^d \sum_{i=1}^d  \left( f(\eta_j x) -f(x) \right) \left( f(\eta_j \eta_i x) - f(\eta_i x) - f(\eta_j x) + f(x)  \right).
\end{eqnarray*}
Adding both terms, we end up with
$$ 2 \Gamma_2(f,f)(x) = \sum_{i=1}^d \sum_{j=1}^d \left( f(\eta_j \eta_i x) - f(\eta_i x) - f(\eta_j x) + f(x) \right)^2  \ge 0, $$
showing $\K_\infty(x) \ge 0$. Under the stronger condition of (R)-Ricci flatness, we can estimate $2 \Gamma_2(f,f)(x)$
from below as follows:
\begin{eqnarray*}
2 \Gamma_2(f,f)(x) &=& \sum_{i=1}^d \sum_{j=1}^d \left( f(\eta_j \eta_i x) - f(\eta_i x) - f(\eta_j x) + f(x) \right)^2 \\
&\ge& \sum_{i=1}^d \left( f(\eta_i \eta_i x) - f(\eta_i x) - f(\eta_i x) + f(x) \right)^2 \\
&=& \sum_{i=1}^d \left( 2 f(x) - 2 f(\eta_i x) \right)^2 = 4 \Gamma(f,f)(x).
\end{eqnarray*}
This shows that $\Gamma_2(f,f)(x) \ge 2 \Gamma(f,f)(x)$, which means that we have $\K_\infty(x) \ge 2$.
\end{proof}

\section{Triangle-free graphs}
\label{sec:triangle-free}

In this section we focus on curvature comparison results for
graphs without triangles. Our main result states that non-negativity of Ollivier
Ricci curvature implies non-negativity of Bakry-\'Emery curvature
under a certain in-degree condition (see Corollary
\ref{cor:curvimp}). This result is derived via Ricci flatness
properties.
 
We start with particular upper curvature bounds in case of triangle-freeness:

\begin{prop} \label{prop:uppbdcurv}
  Let $G=(V,E)$ be $d$-regular. Then we have the
  following upper curvature bounds:
  \begin{itemize}
  \item[(i)] $\kappa_0(x,y) \le 0$ for all edges $\{x,y\} \in E$ not contained in a triangle,
  \item[(ii)] $\kappa_{LLY}(x,y) \le \frac{2}{d}$ for all edges
    $\{x,y\} \in E$ not contained in a triangle,
  \item[(iii)] $\K_\infty(x) \le 2$ for all $x \in V$ not contained in a triangle.
  \end{itemize}
\end{prop}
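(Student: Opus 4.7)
The plan is to observe that all three bounds are immediate specializations of the two general upper curvature bounds already established in the excerpt, namely Theorems \ref{thm:uppbdLLY} and \ref{thm:uppbdBE}. The only input needed is the translation of the triangle-free hypotheses into vanishing triangle counts.

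For parts (i) and (ii), the assumption that the edge $\{x,y\}$ is not contained in any triangle means precisely that $x$ and $y$ share no common neighbour, so $\#_\Delta(x,y)=0$. Plugging this into Theorem \ref{thm:uppbdLLY} yields $\kappa_0(x,y) \le 0/d = 0$ and $\kappa_{LLY}(x,y) \le (2+0)/d = 2/d$. For part (iii), the assumption that $x$ is not contained in any triangle means $\#_\Delta(x)=0$, and substituting into Theorem \ref{thm:uppbdBE} gives $\K_\infty(x) \le 2 + 0/d = 2$.

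If one prefers a self-contained argument for (i), the cleanest route is Kantorovich duality: taking the $1$-Lipschitz potential $\phi(z) := -d(z, S_1(x))$, one sees that $\phi \equiv 0$ on $S_1(x)$, whereas triangle-freeness forces $\phi(x) = -1$ and $\phi(v) = -1$ for every $v \in S_1(y)\setminus\{x\}$ (each such $v$ lies at distance $2$ from $x$, hence at distance $1$ from $S_1(x)$). Summing against $\mu_x^0 - \mu_y^0$ then gives $W_1(\mu_x^0,\mu_y^0)\ge 1$. A very similar potential, evaluated against $\mu_x^p - \mu_y^p$ and passing to the limit $p\to 1$, handles (ii). For a direct proof of (iii), one tests the $CD(\K,\infty)$ inequality against the function $f = \mathbbm{1}_y$ for a neighbour $y$ of $x$, which under triangle-freeness gives exactly $\Gamma_2(f)(x) = 2\,\Gamma(f)(x)$, forcing $\K_\infty(x)\le 2$.

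Since the two cited theorems are already proved in the paper, there is essentially no obstacle here: the proposition is a direct corollary, and the only thing to spell out is the correspondence between the combinatorial hypothesis ``not contained in a triangle'' and $\#_\Delta = 0$. I would therefore present the proof as a short three-line argument quoting Theorems \ref{thm:uppbdLLY} and \ref{thm:uppbdBE}, without reproducing the alternative direct arguments sketched above.
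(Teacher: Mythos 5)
Your main argument is correct and is essentially the paper's own: the authors state explicitly that (i) and (ii) are implications of Theorem \ref{thm:uppbdLLY} and that (iii) is an implication of Theorem \ref{thm:uppbdBE}, exactly via $\#_\Delta(x,y)=0$ and $\#_\Delta(x)=0$. The only difference is that the paper nevertheless writes out a direct transport-plan proof of (i) and (ii) --- lower-bounding $W_1(\mu_x^0,\mu_y^0)$ by $1$ using $S_1(x)\cap S_1(y)=\emptyset$, and $W_1(\mu_x^{1/(d+1)},\mu_y^{1/(d+1)})$ by $1-\tfrac{2}{d+1}$ using $B_1(x)\cap B_1(y)=\{x,y\}$ --- because this argument is what yields the equality characterizations in Remark \ref{rem:curvcomb} (equality in (i)/(ii) iff there is a perfect matching between $S_1(x)$ and $S_1(y)$, resp.\ between $S_1(x)\setminus\{y\}$ and $S_1(y)\setminus\{x\}$), which are used later. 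So citing the two theorems is a legitimate proof of the proposition as stated, but it forfeits that by-product.

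Two of your optional ``self-contained'' sketches do not work as written, so do not include them. For (ii), the potential $\phi(z)=-d(z,S_1(x))$ gives $\sum\phi\,(\mu_x^p-\mu_y^p)=1-2p$, hence only $\kappa_p(x,y)\le 2p$; then $\kappa_p/(1-p)\le 2p/(1-p)\to\infty$ as $p\to1$, so ``passing to the limit'' yields nothing. The bound is recovered only by evaluating at the specific idleness $p=\tfrac{1}{d+1}$ and invoking the identity \eqref{eq:LLYOllconn}, which is what the paper does. For (iii), testing with $f=\1_y$ gives, at a triangle-free vertex $x$, $\Gamma(f)(x)=\tfrac12$ and $2\Gamma_2(f)(x)=\Delta\Gamma(f)(x)-2\Gamma(f,\Delta f)(x)=0+(d+1)$, so $\Gamma_2(f)(x)/\Gamma(f)(x)=d+1$, not $2$; this only shows $\K_\infty(x)\le d+1$. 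Obtaining the sharp bound $2$ genuinely requires the finer analysis behind \cite[Corollary 3.3]{CLP}, so for (iii) you should simply cite Theorem \ref{thm:uppbdBE} as you propose.
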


\begin{remark}
  Combining the proposition with the lower curvature bounds for Ricci
  flatness (Theorems \ref{thm:ORcurvRF} and \ref{thm:BEcurvRF}), we
  obtain the following curvature equalities:
  \begin{itemize}
  \item If $x$ is Ricci flat and the egde $\{x,y\} \in E$
    is not contained in any triangle then $\kappa_0(x,y) = 0$.
  \item If $x$ is $(R)$-Ricci flat and the egde $\{x,y\} \in E$
    is not contained in any triangle then $\kappa_{LLY}(x,y) = \frac{2}{d}$.
  \item If $x$ is $(R)$-Ricci flat and not contained in any triangle
    then $\K_\infty(x) = 2$.
  \end{itemize}
\end{remark}

\begin{proof}[Proof of Proposition \ref{prop:uppbdcurv}]
	
	Although Statements (i) and (ii) are an implication from Theorem \ref{thm:uppbdLLY}, we provide their proof here which presents a useful idea for the following remark.
	\medskip
	
	Statement (i) follows from
	\begin{eqnarray} \label{eq:k0upbd}
	W_1(\mu_x^0,\mu_y^0) &=& \sum_{u \in S_1(x)} \sum_{v \in S_1(y)}
	d(u,v) \pi_{opt}(u,v)\nonumber \\ &\ge& \sum_{u \in S_1(x)} \sum_{v \in S_1(y)}
	\pi_{opt}(u,v) = 1, 
	\end{eqnarray}
	since $S_1(x) \cap S_1(y) = \emptyset$. Here $\pi_{opt}$ is an
	optimal transport plan in $\Pi(\mu_x^0,\mu_y^0)$.
	
	For the proof of (ii), we only need to show 
	$$ \kappa_{\frac{1}{d+1}}(x,y) \le \frac{2}{d+1}, $$
	by \eqref{eq:LLYOllconn}. This follows from
	\begin{eqnarray} \label{eq:kdpoupbd}
	W_1(\mu_x^{1/(d+1)},\mu_y^{1/(d+1)}) &=& \sum_{u \in B_1(x)}
	\sum_{v \in B_1(y)} d(u,v) \pi_{opt}(u,v)\nonumber \\ &\ge& \left(
	\sum_{u \in B_1(x)} \sum_{v \in B_1(y)} \pi_{opt}(u,v) \right) -
	\pi_{opt}(x,x) - \pi_{opt}(y,y)\nonumber \\ &\ge& 1 -
	\frac{2}{d+1},
	\end{eqnarray}
	since $B_1(x) \cap B_1(y) = \{x,y\}$ and $\pi_{opt}(u,u) \le \mu_x^{1/(d+1)}(u)
	\le \frac{1}{d+1}$. Here $\pi_{opt}$ is an
	optimal transport plan in $\Pi(\mu_x^{1/(d+1)},\mu_y^{1/(d+1)})$.
	\medskip
	
	Statement (iii) is an implication from Theorem \ref{thm:uppbdBE}.
\end{proof}

\begin{remark} \label{rem:curvcomb}
  Note that in Proposition \ref{prop:uppbdcurv}, (ii) implies (i) by
  Theorem \ref{thm:kLLYk0comp}. Moreover, it follows from the above
  proof that sharpness of the bounds in (i) and (ii) has the
  following combinatorial interpretation in the triangle-free case:
  \begin{itemize}
  \item[(a)] $\kappa_0(x,y)=0$ is equivalent that there is a perfect matching
    between $S_1(x)$ and $S_1(y)$.
  \item[(b)] $\kappa_{LLY}(x,y)= \frac{2}{d}$ is equivalent that there is a
    perfect matching between $S_1(x) \backslash \{y\}$ and
    $S_1(y)\backslash \{x\}$.
  \end{itemize}
\end{remark}

A natural class of examples where all three upper bounds of
Proposition \ref{prop:uppbdcurv} are attained are distance-regular
graphs of girth $4$ (see Section \ref{sec:distreg} below). To motivate
our next result, let us focus on one particular example:

\begin{example} Let $S_1(x) = \{v_1,\dots,v_d\}$ and
  $S_2(x) = \{v_{ij} \mid 1 \le i < j \le d \}$ with
  $v_i,v_j \sim v_{ij}$. In fact this is the $2$-ball of the
  $d$-dimensional hypercube $Q^d$ and we have the following curvatures
  (see Remark \ref{rem:excurv}):
  $$ \kappa_0(x,v_i) = 0, \quad \kappa_{LLY}(x,v_i) = \frac{2}{d}, \quad
  \K_\infty(x) = 2. $$
  We also like to mention that the vertex $x$ in this example is
  $(RS)$-Ricci flat and that we have $d_x^-(z) = 2$ for all
  $z \in S_2(x)$.
\end{example}

\begin{thm} \label{thm:curvRic} Given a regular graph $G=(V,E)$, let
  $x \in V$ be a vertex not contained in a triangle and satisfying
  $d_x^-(z) \le 2$ for all $z \in S_2(x)$. Then we have the following:
  \begin{itemize}
    \item[(a)] $\kappa_0(x,y) = 0$ for all $y \in S_1(x)$ is equivalent to $x$ being (S)-Ricci flat.
    \item[(b)] $\kappa_{LLY}(x,y) = \frac{2}{d}$ for all
      $y \in S_1(x)$ is equivalent to $x$ being $(RS)$-Ricci flat.
  \end{itemize}
\end{thm}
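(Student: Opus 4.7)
The plan is to parameterize the candidate Ricci-flat maps at $x$ by the matrix $\phi(i, j) := \eta_i(v_j)$, where $S_1(x) = \{v_1, \ldots, v_d\}$ and $\eta_i(x) := v_i$. By Lemma \ref{lem:crucial} and triangle-freeness, each $\phi(i, j)$ with $i \ne j$ must lie in $\{x\} \cup C_{ij}$, where $C_{ij} := S_1(v_i) \cap S_1(v_j) \cap S_2(x)$; the $(S)$-property of Definition \ref{defn:ricci_flat} translates to $\phi(i, j) = \phi(j, i)$, and $(R)$ to $\phi(i, i) = x$. I would organize $S_2(x) = W_1 \sqcup W_2$ by whether $d_x^-(w) = 1$ or $2$, and view $W_2$ as the edge set of a multigraph $\Gamma$ on $\{1, \ldots, d\}$ (with edge $\{i, j\}$ for each $w \in W_2$ such that $N(w) \cap S_1(x) = \{v_i, v_j\}$), calling each $w \in W_1$ a pendant at its unique neighbor $v_i$ and writing $p_i$ for the pendant count at $v_i$.

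The $(\Leftarrow)$ directions are immediate: $(S)$-Ricci flatness implies Ricci flatness, so Theorem \ref{thm:ORcurvRF}(a) combined with the triangle-free upper bound in Proposition \ref{prop:uppbdcurv}(i) yields $\kappa_0(x, y) = 0$; part (b) is analogous via $(RS) \Rightarrow (R)$, Theorem \ref{thm:ORcurvRF}(b), and Proposition \ref{prop:uppbdcurv}(ii). For the $(\Rightarrow)$ directions, the engine is Remark \ref{rem:curvcomb}, which translates the curvature hypotheses into the existence of perfect matchings between certain $S_1$-spheres; Hall-type obstructions then force strong restrictions on $\Gamma$. In part (b), $\kappa_{LLY}(x, v_i) = 2/d$ rules out pendants at $v_i$ (which would be isolated in the bipartite graph of Remark \ref{rem:curvcomb}(b)) and rules out $|C_{ij}| \ge 2$ (two such $w$'s share the unique neighbor $v_j \in S_1(x) \setminus \{v_i\}$, invoking the in-degree hypothesis). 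Applied at every $i$, this forces $W_1 = \emptyset$ and $\Gamma = K_d$ with $|C_{ij}| = 1$ throughout, and the forced assignment $\phi(i, i) := x$, $\phi(i, j) := $ the unique element of $C_{ij}$ is manifestly symmetric, reflexive, and bijective on each row, giving $(RS)$-Ricci flatness directly.

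The main difficulty is part (a), where both pendants and multi-pairs ($|C_{ij}| = 2$) are allowed. Here I would first deduce from $\kappa_0(x, v_i) = 0$ and the in-degree hypothesis the structural restrictions $p_i \le 1$, $|C_{ij}| \le 2$, and each $v_i$ lies in at most one multi-pair (with $p_i = 0$ whenever $v_i$ is in a multi-pair); each follows from an explicit Hall failure---two pendants both demanding $v_i$, three $w$'s in $C_{ij}$ demanding only $\{v_i, v_j\}$, or two multi-pairs at $v_i$ providing $4$ elements with only $3$ admissible targets. I would then construct $\phi$ by selecting from each multi-pair one element $w$ to be ``parallel'' (setting $\phi(i, i) = \phi(j, j) = w$), declaring every other $w \in W_2$ ``cross'' (setting $\phi(i, j) = \phi(j, i) = w$), and letting each $W_1$-pendant $w$ at $v_i$ fill $\phi(i, i) := w$. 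A counting check shows that each row fills exactly $d - 1$ non-$x$ slots bijectively, leaving a unique slot for $x$. Properties (i), (iii) and (S) are immediate from the construction. Property (ii) is where the in-degree hypothesis enters essentially: a coincidence $\phi(i, k) = \phi(i', k) = w \in S_2(x)$ with $\{i, i', k\}$ all distinct would force $d_x^-(w) \ge 3$, while the alternative $k \in \{i, i'\}$ contradicts row bijectivity via symmetry of $\phi$. The trickiest step is establishing the ``at most one multi-pair per vertex'' restriction, which requires a careful Hall-failure analysis across several overlapping multi-bundles at $v_i$.
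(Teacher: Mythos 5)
Your proposal is correct, and for the reverse implications it coincides with the paper's argument (Theorem \ref{thm:ORcurvRF} combined with Proposition \ref{prop:uppbdcurv}). For the forward implications, both proofs rest on the same two pillars --- the perfect-matching characterization of maximal curvature from Remark \ref{rem:curvcomb} and Hall-type failures driven by the hypothesis $d_x^-(z)\le 2$ --- but they are organized quite differently. The paper works directly with the optimal matchings $\sigma_i\colon S_1(x)\to S_1(y_i)$: it normalizes them by a property (P) (send $y_i$ to $x$ whenever a matching avoiding $x$ exists), proves a key ``Fact'' ($\sigma_i(y_j)=x$ if and only if $y_i,y_j$ are unlinked), and then repairs the asymmetric pairs by a local swap, using the in-degree bound to show that distinct bad pairs are disjoint. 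You instead extract from the Hall failures a complete structural classification of the incomplete $2$-ball (at most one pendant per $y_i$, at most two links per pair, at most one doubly-linked partner per $y_i$, and no pendant at a doubly-linked vertex) and then write down the symmetric maps from scratch; your ``parallel/cross'' resolution of a double link is exactly the configuration the paper's swap produces, and your counting correctly places the single remaining $x$-slot in each row. Your route is somewhat longer because conditions (i)--(iii) must be verified for the constructed maps rather than inherited from the matchings, but it yields as a by-product an explicit description of all admissible local structures; moreover your treatment of part (b) is more direct than the paper's, which routes through part (a) via Theorem \ref{thm:kLLYk0comp}, whereas in your version the hypothesis $\kappa_{LLY}=\frac{2}{d}$ immediately excludes pendants and double links and forces the unique reflexive symmetric assignment.
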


This result, together with Theorem \ref{thm:BEcurvRF}, implies our
main curvature comparison result in Theorem \ref{cor:curvimp} from the
Introduction:

\begin{proof}[Proof of Theorem \ref{cor:curvimp}]
  Under the assumptions of Theorem \ref{thm:curvRic}, we first assume
  that $\kappa_0(x,y) = 0$ for all $y \in S_1(x)$. This implies that
  $x$ is Ricci flat and, by Theorem \ref{thm:BEcurvRF}(a), that
  $\K_\infty(x) \ge 0$.

  Similarly, assuming $\kappa_{LLY}(x,y) = \frac{2}{d}$ for all
  $y \in S_1(x)$, we know that $x$ is $(R)$-Ricci flat, and Theorem
  \ref{thm:BEcurvRF}(b) implies that $\K_\infty(x) \ge 2$. Since
  $x$ is not contained in a triangle, this leads to $\K_\infty(x) = 2$
  by Proposition \ref{prop:uppbdcurv}(iii).
\end{proof}

Before we start with the proof of Theorem \ref{thm:curvRic}, let us
introduce the following notion and discuss relations to existing
results.

\begin{defn}
  Let $G=(V,E)$ be a regular triangle-free graph and $x \in V$. We say that $y_1,y_2 \in S_1(x)$ are \emph{linked}
  by $z \in S_2(x)$ if we have $y_1 \sim z \sim y_2$.  We refer to $z$ as a \emph{link} of $y_1$ and $y_2$.
\end{defn}

P. Ralli \cite{Ralli} investigated curvature implications for regular
graphs without $K_3$ and $K_{3,2}$ as subgraphs. It is easy to check
that this condition is equivalent to the following properties at all
vertices $x$:
  \begin{itemize}
  \item[(i)] $x$ is not contained in a triangle,
  \item[(ii)] $d_x^-(z) \le 2$ for all $z \in S_2(x)$,
  \item[(iii)] Any pair $y_1,y_2 \in S_1(x)$ has at most one link. 
  \end{itemize}
  A consequence of his results is that conditions (i),(ii),(iii) imply $\K_\infty(x) \le 0$ or $\K_\infty(x) = 2$. Under these conditions, Ralli has the following equivalence:
  $$ \text{$\kappa_0(x,y) = 0$ for all $y \in S_1(x)$} \Longleftrightarrow \text{$\K_\infty(x) \ge 0$}. $$
  Our theorem implies that the implication ''$\Longrightarrow$'' holds already under conditions (i) and (ii) and
  we have an example that the implication ''$\Longleftarrow$'' is no longer true if one drops condition (iii).  

\begin{proof}[Proof of Theorem \ref{thm:curvRic}]
  The implications $\Longleftarrow$ in (a) and (b) follow immediately from
  Theorem \ref{thm:ORcurvRF} and Proposition \ref{prop:uppbdcurv}.

  Let us now prove the forward implication in (a). Let $x \in V$ be given with $d = d_x$ and $S_1(x) = \{y_1,\dots,y_d\}$. The property $\kappa_0(x,y) = 0$ for
  all $y \in S_1(x)$ implies that we have perfect matchings $\sigma_i: S_1(x) \to S_1(y_i)$ for all $1 \le i \le d$. 
  In particular, we can assume that these perfect matchings $\sigma_i$ satisfy the following property:
  
  \medskip
  
  {\bf Property (P):} If  there exists a perfect matching between $S_1(x) \backslash \{ y_i \}$ and $S_1(y_i) \backslash \{ x \}$ then  $\sigma_i(y_i) = x$. 
  
  \medskip

  Our goal is to show that we can modify these perfect matchings in such a way that $\sigma_i(y_j) = \sigma_j(y_i)$
  for all $i \neq j$. Defining then $\eta_i: B_1(x) \to B_1(y_i)$ as $\eta_i(x) = y_i$ and $\eta_i(y) = \sigma_i(y)$ for $y \in S_1(x)$ provide (S)-Ricci flatness.
   
   We first prove the following crucial fact:
   
   \medskip
   
   {\bf Fact:} Let $i \neq j$. We have $\sigma_i(y_j) = x$ if and only if $y_i$ and $y_j$ are not linked.
   
   \medskip
   
   This fact can be shown as follows: We first prove the easier ''$\Longleftarrow$'' implication. Assume $y_i$ and $y_j$ are not linked. Then $\sigma_i(y_j) \sim y_i,y_j$ cannot be in $S_2(x)$ and we must have therefore $\sigma_i(y_j) = x$.
   For the ''$\Longrightarrow$'' implication, we provide an indirect proof: If $y_i$ and $y_j$ were linked by $z \in S_2(x)$, then the $\sigma_i$-preimage of $z \in S_1(y_i)$ must be in $\{y_i,y_j\}$ but we know that $\sigma_i(y_j) = x$. Therefore $\sigma_i(y_i)=y_j$.
   Defining then the map $\tilde \sigma_i: S_1(x) \to S_1(y_i)$ via
   $$ \tilde \sigma_i(y_k) = \begin{cases} \sigma_i(y_k) & \text{if $k \neq i,j$,}\\ 
   z & \text{if $k=j$,}\\ x & \text{if $k=i$,} \end{cases} $$
   induces a perfect matching between $S_1(x) \backslash \{ y_i \}$ and $S_1(y_i) \backslash \{ x \}$. This would imply
   $\sigma_i(y_i) = x$ contradicting to $\sigma_i(y_j) = x$.
   
   \medskip
   
   Now we prove our goal.
   
   We first show that $\sigma_i(y_j) = x$ implies $\sigma_j(y_i) = x$: Since $\sigma_i(y_j) = x$, $y_i$ and $y_j$ are not linked by our Fact which, in turn, implies $\sigma_j(y_i) = x$ by our Fact, again.
   
   We deal with all other pairs $(i,j)$, $i \neq j$ as follows: If $\sigma_i(y_j) = \sigma_j(y_i)$, we do not change the assignments $\sigma_i(y_i),\sigma_i(y_j),\sigma_j(y_i),\sigma_j(y_j)$. Now we assume that $\sigma_i(y_j) =: z \neq
   \sigma_j(y_i) := z'$. Note that $z,z' \in S_2(x)$ and they both are links of $y_i$ and $y_j$. Since $z \in S_1(y_j)$ and
   $d_x^{-}(z) \le 2$, we must have $\sigma_j^{-1}(z) \in \{ y_i,y_j \}$. Since $\sigma_j$ is injective and $\sigma_j(y_i)=z'$, we must have $\sigma_j^{-1}(z) = y_j$. So we must have
\begin{equation} \label{eq:sigmaz}
   \sigma_j(y_j) = z.
\end{equation}
Similarly, we conclude that $\sigma_i(y_i) = z'$. Now we modify $\sigma_i$ as follows: $\sigma_i(y_i) = z$ and $\sigma_i(y_j) = z'$. This preserves property (P) of the perfect matching $\sigma_i$ and establishes $\sigma_i(y_j) = \sigma_j(y_i)$ for this pair of indices $(i,j)$. Note that if $(i,j)$ and $(k,l)$ are two different pairs with $\sigma_i(y_j) \neq \sigma_j(y_i)$ and $\sigma_k(y_l) \neq \sigma_l(y_k)$ then 
   $\{i,j\} \cap \{k,l\} = \emptyset$ for, otherwise, if $k=i$, there is no perfect matching between $S_1(x)$ and $S_1(y_i)$
   since the four links between $y_i,y_j$ and $y_i,y_l$ can only have three possible preimages under $\sigma_i$.
This guarantees that we can repeat this process for all such pairs $(i,j)$ simultaneously and we will end up with the    
required symmetric arrangement.

Finally, it remains to prove the forward implication of (b). The
assumption $\kappa_{LLY}(x,y) = \frac{2}{d}$ for all $y \in S_1(x)$
implies $\kappa_0(x,y) = 0$ by Theorem \ref{thm:kLLYk0comp}. The
existence of perfect matchings between $S_1(x)\backslash\{y_i\}$ and
$S_1(y_i)\backslash\{x\}$ for all $1 \le i \le d$ from Remark
\ref{rem:curvcomb} further imply that our chosen maps $\sigma_i$
satisfy $\sigma_i(y_i)=x$ for all $i$. In this situation, we can
disregard the above possibility of
$z = \sigma_i(y_j) \neq \sigma_j(y_i) = z'$ with $z,z' \in S_2(x)$,
since this would imply \eqref{eq:sigmaz}, which contradicts to
$\sigma_j(y_j) =x$.  Therefore, the maps $\sigma_i$ do not need to be
modified and the induced maps $\eta_i: B_1(x) \to V$ satisfy both
symmetry and reflexivity.
\end{proof}

\begin{remark}\ \\
  (a) The reverse of the implication in Theorem \ref{cor:curvimp}(a)
  is not true since we have a triangle-free $2$-ball in
  Figure \ref{fig:count_ex} with $\K_\infty(x) = 0$, $d_x^-(z) = 2$
  for all $z \in S_2(x)$ and $\kappa_0(x,y) < 0$ for all
  $y \in S_1(x)$ as a counterexample. Note that
  $S_1(x) = \{v_1,\dots,v_6\}$.
  
  \tikzstyle{every node}=[circle, draw, fill=black,
                        inner sep=0pt, minimum width=4pt]
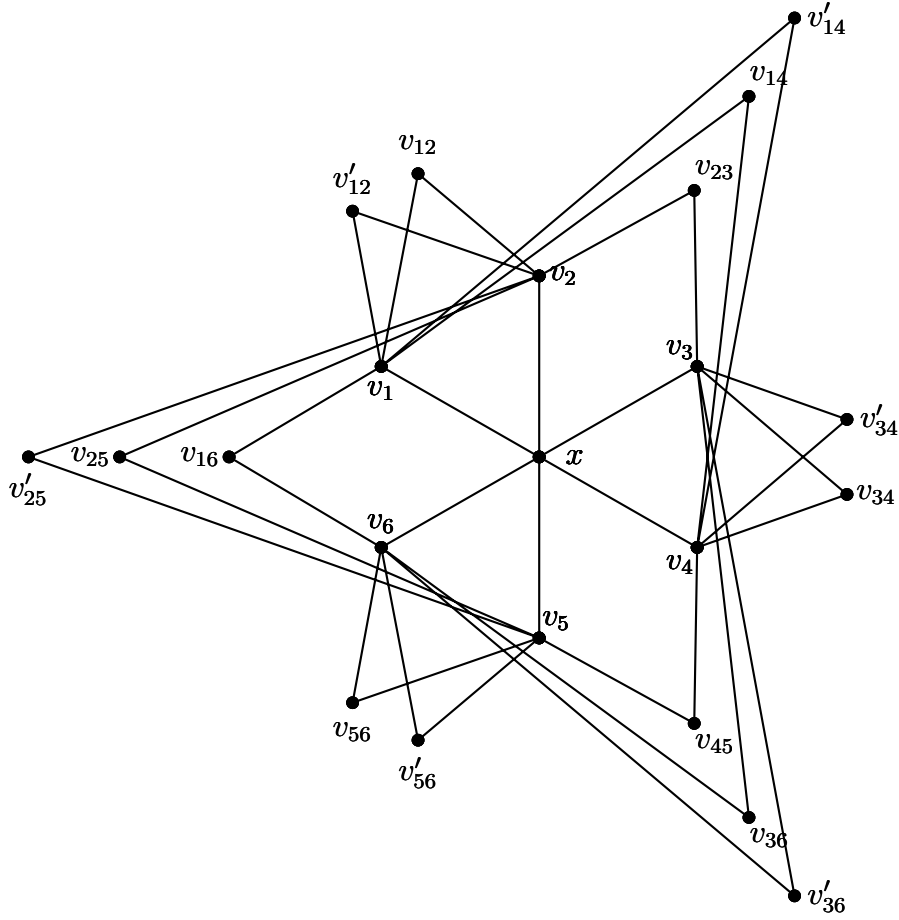
\begin{figure}[h!]
\begin{center}
\begin{tikzpicture}[thick,scale=1.2]%
    \draw \foreach \y in {30,90,...,330} {
		(\y:2) -- (0:0) node[label=right:\;\;$x$]{}    	  
		(150:2) node[label=below:$v_1$]{}
		(210:2) node[label=above:$v_6$]{}
		(270:2) node[label=above right:$v_5$]{}
		(330:2) node[label=below left:$v_4$]{}
		(30:2) node[label=above left:$v_3$]{}
		(90:2) node[label=right:$v_2$]{}
};
   \draw \foreach \z in {60,180,300} {
 	   (\z:3.4) -- (\z-30:2)
 	   (\z:3.4) -- (\z+30:2)
 	   (\z:4.6) -- (\z-90:2)
 	   (\z:4.6) -- (\z+90:2)
 	   (\z:5.6) -- (\z-90:2)
 	   (\z:5.6) -- (\z+90:2)
 	   (\z+53:3.4) -- (\z+30:2)
 	   (\z+67:3.4) -- (\z+30:2)
 	   (\z+53:3.4) -- (\z+90:2)
 	   (\z+67:3.4) -- (\z+90:2)
 	   (60:3.4) node[label=above right:{$v_{23}$}]{}
 	   (180:3.4) node[label=left:{$v_{16}$}]{}
 	   (300:3.4) node[label=below right:{$v_{45}$}]{}
 	   (60:4.6) node[label=above right:{$v_{14}$}]{}
 	   (180:4.6) node[label=left:{$v_{25}$}]{}
 	   (300:4.6) node[label=below right:{$v_{36}$}]{}
 	   (60:5.6) node[label=right:{$v_{14}'$}]{}
 	   (180:5.6) node[label=below:{$v_{25}'$}]{}
 	   (300:5.6) node[label=right:{$v_{36}'$}]{}
 	   (113:3.4) node[label=above:{$v_{12}$}]{}
 	   (127:3.4) node[label=above:{$v_{12}'$}]{}
 	   (233:3.4) node[label=below:{$v_{56}$}]{}
 	   (247:3.4) node[label=below:{$v_{56}'$}]{}
 	   (353:3.4) node[label=right:{$v_{34}$}]{}
 	   (7:3.4) node[label=right:{$v_{34}'$}]{}
   };
\end{tikzpicture}
\end{center}
\caption{Example with $\K_\infty(x) = 0$, $d_x^-(z) = 2$ for all
  $z \in S_2(x)$ and $\kappa_0(x,v_i) = - \frac{1}{3}$.
  \label{fig:count_ex}}
\end{figure}

\smallskip

  \noindent
  (b) All conditions in Theorem \ref{thm:curvRic}(a) are necessary: 
  \begin{itemize}
  \item[(i)] If $x$ is contained in a triangle, we have the
    icosidodecahedral graph (see Figure \ref{fig:icosidodecahedron})
    as a counterexample with $\kappa_0(x,y)=0$ for all edges $\{x,y\}$
    but $\K_\infty(x) < 0$ for all vertices $x$, which means that $x$
    cannot be Ricci flat by Theorem \ref{thm:BEcurvRF}.
  \item[(ii)] If we drop $d_x^-(z) \le 2$ for all $z \in S_2(x)$,
    Figure \ref{fig:counterexample} provides a counterexample with
    $\kappa_0(x,y) = 0$ for all $y \in S_1(x)$ and $\K_\infty(x) < 0$.
  \end{itemize}
  
\begin{figure}[h!]
\begin{center}
\tikzstyle{every node}=[circle, draw, fill=black!50,
                        inner sep=0pt, minimum width=4pt]
    
 \begin{tikzpicture}[thick,scale=1]%
        \draw (0,0) node(x1)[label=above:$x$] {};
        \draw (-2,-2) node(x2){};
        \draw (-1,-2) node(x3){};
        \draw (0,-2) node(x4){};
        \draw (1,-2) node(x5){};
        \draw (2,-2) node(x6){};
        \draw (-5,-4) node(x7){};
        \draw (-4,-4) node(x8){};
        \draw (-3,-4) node(x9){};
        \draw (-2,-4) node(x10){}; 
        \draw (-1,-4) node(x11){};
        \draw (0,-4) node(x12){};
        \draw (1,-4) node(x13){};
        \draw (2,-4) node(x14){}; 
        \draw (3,-4) node(x15){};
        \draw (4,-4) node(x16){};
        \draw (5,-4) node(x17){};
        \draw (x2)--(x1)--(x3)  (x4)--(x1)--(x5) (x1)--(x6);
        \draw (x2)--(x7) (x2)--(x8) (x2)--(x10) (x2)--(x11);
        \draw (x3)--(x9) (x3)--(x8) (x3)--(x10) (x3)--(x13);
        \draw (x4)--(x10) (x4)--(x11) (x4)--(x12) (x4)--(x15);
        \draw (x5)--(x11) (x5)--(x13) (x5)--(x14) (x5)--(x16);
        \draw (x6)--(x13) (x6)--(x15) (x6)--(x16) (x6)--(x17);
\end{tikzpicture}
\caption{Example with $\K_\infty(x)=-0.194 < 0$ and $\kappa_p(x,y)=0$ $\forall$
  $p\in[0,1]$, $y\sim x$.}
   \label{fig:counterexample}
   \end{center}
\end{figure}
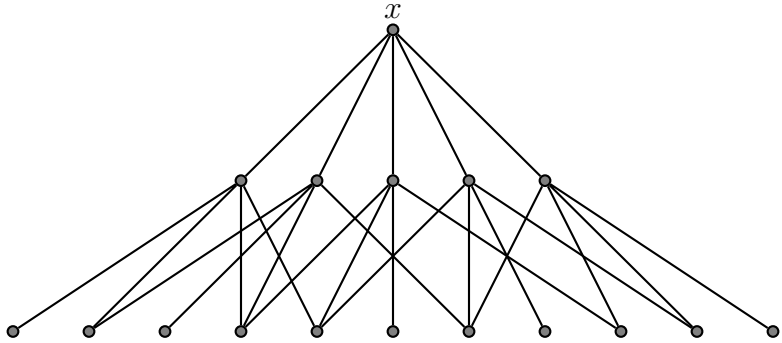

\noindent
(c) All conditions in Theorem \ref{thm:curvRic}(b) are necessary. Since
in the case of triangles we have the following upper bound
$$ \kappa_{LLY}(x,y) \le \frac{2+\#_\Delta(x,y)}{d}, $$
a natural generalization of the equivalence in the case of triangles
would be the following statement:
$$ \text{$\kappa_{LLY}(x,y) = \frac{2 + \#_\Delta(x,y)}{d}$ for all
$y \in S_1(x)$ is equivalent to $x$ being (RS)-Ricci flat.} $$
  \begin{itemize}
  \item[(i)] If $x$ is contained in a triangle, we have
    $K_3 \times K_3$ with $d=4$ as a
    counterexample:
    $$ \kappa_{LLY}(x,y) = \frac{3}{4} = \frac{2 + \#_\Delta(x,y)}{d} $$
    for all edges $\{x,y\}$, but no vertex of $K_3 \times K_3$ is $(RS)$-Ricci
    flat.
  \item[(ii)] If we drop $d_x^-(z) \le 2$ for all $z \in S_2(x)$, the
    $6$-regular incidence graph of the $(11,6,3)$-design provides a
    counterexample with $\kappa_{LLY}(x,y) = \frac{1}{3}$ for all
    $y \in S_1(x)$, but $x$ is not $(RS)$-Ricci flat (see Example \ref{ex:incidence_graph}).
  \end{itemize}
\end{remark}

\section{Graph products} 

This section is concerned with three natural products of two graphs
$G$ and $H$: the tensor product $G \otimes H$, the Cartesian product
$G \times H$, and the strong product $G \boxtimes H$. We will see that
Ricci flatness is preserved under all three products. However, while
Cartesian products preserve non-negativity of both Bakry-\'Emery and
Ollivier Ricci curvature, we will see that this property fails to be
true in the case of strong products.

Let us start with the definitions of these graph products:

\begin{defn} \label{def:graph-products}
  Let $G=(V_G,E_G)$ and $H=(V_H,E_H)$ be two graphs. The vertex set of
  each of the three products $G \otimes H$ (\emph{tensor product}),
  $G \times H$ (\emph{Cartesian product}) and $G \boxtimes H$
  (\emph{strong product}) is given by $V_G \times V_H$. To define
  the edge sets for each of these products, let
  \begin{eqnarray*}
    E_{\rm hor} &:=& \{ \{(x_1,y),(x_2,y)\} \mid x_1 \stackrel{G}{\sim} x_2 \}, \\
    E_{\rm vert} &:=& \{ \{(x,y_1),(x,y_2)\} \mid y_1 \stackrel{H}{\sim} y_2 \}, \\
    E_{\rm diag} &:=& \{ \{(x_1,y_1),(x_2,y_2)\} \mid \text{$x_1 \stackrel{G}{\sim} x_2$ and $y_1 \stackrel{H}{\sim} y_2$} \}
  \end{eqnarray*}
  denote the set of \emph{horizontal}, \emph{vertical} and
  \emph{diagonal} edges. Then
  \begin{eqnarray*}
    G \otimes H &:=& (V_G \times V_H, E_{\rm diag}), \\
    G \times H &:=& (V_G \times V_H, E_{\rm hor} \cup E_{\rm vert} ), \\
    G \boxtimes H &:=& (V_G \times V_H, E_{\rm hor} \cup E_{\rm vert} \cup E_{\rm diag}).
  \end{eqnarray*}
\end{defn}

Note that, in the case of a $d_G$-regular graph $G$ and a
$d_H$-regular graph $H$, the products $G \otimes H$, $G \times H$ and
$G \boxtimes H$ are $(d_G d_H)$-regular, $(d_G + d_H)$-regular and
$(d_G + d_H + d_g d_H)$-regular, respectively.

Our first result is concerned with preservance of Ricci flatness:

\begin{thm} \label{thm:preserv-Rf}
  Let $G,H$ be two Ricci flat graphs. Then the graph products $G \otimes H$, $G \times H$ and $G \boxtimes H$
  are again Ricci flat. Similarly, all three graph products preserve also $(R)$-Ricci flatness, $(S)$-Ricci flatness and $(RS)$-Ricci flatness.
\end{thm}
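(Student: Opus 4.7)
The plan is to construct, for each of the three products, a canonical family of Ricci flat maps at any vertex of the product graph, built directly out of given Ricci flat maps on the factors. Concretely, suppose $G$ is Ricci flat at $x$ with maps $\eta_1^G,\dots,\eta_{d_G}^G \colon B_1^G(x) \to V_G$ and $H$ is Ricci flat at $y$ with maps $\eta_1^H,\dots,\eta_{d_H}^H \colon B_1^H(y) \to V_H$. Mirroring the decomposition of the edge set into horizontal, vertical, and diagonal edges, I will introduce three types of maps on the product $B_1$-ball: horizontal maps $\theta_i^{\mathrm{h}}(u,v) := (\eta_i^G(u),v)$, vertical maps $\theta_k^{\mathrm{v}}(u,v) := (u,\eta_k^H(v))$, and diagonal maps $\theta_{i,k}^{\mathrm{d}}(u,v) := (\eta_i^G(u),\eta_k^H(v))$. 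For the Cartesian product I use only the $d_G+d_H$ horizontal and vertical maps; for the tensor product only the $d_G d_H$ diagonal maps; and for the strong product all $d_G + d_H + d_Gd_H$ maps.

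The first step is to check that each family is well-defined on the appropriate $B_1$-ball of the product (this follows because in each case every coordinate of a vertex in $B_1((x,y))$ still lies in the corresponding factor $B_1$-ball where the $\eta$'s are defined) and to verify conditions (i) and (ii) of Definition \ref{defn:ricci_flat}. Condition (i) is immediate from the corresponding property of the $\eta$'s and the definition of the product edge set. Condition (ii) follows by type analysis: maps of different types map $(x,y)$ into different ``directions'' (horizontal, vertical, diagonal) and thus produce pairwise distinct images, while maps of the same type are distinguished by condition (ii) for $\eta^G$ or $\eta^H$. I will use Lemma \ref{lem:crucial} to see that each $\eta_i^G$ is a bijection on $B_1$, which is what keeps distinctness under the compositions arising in (iii).

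The central step is condition (iii): $\bigcup_m \theta_m(\theta_\ell(x,y)) = \bigcup_m \theta_\ell(\theta_m(x,y))$ for each $\ell$. I will prove this by splitting into cases according to the types of $\ell$ and $m$. When $\ell$ and $m$ are of the same type the identity reduces, on each coordinate, to the Ricci flatness relation (iii) for the corresponding factor (so $\bigcup_j \eta_j^G(\eta_i^G(x)) = \bigcup_j \eta_i^G(\eta_j^G(x))$ does the job on the $G$-coordinate, and symmetrically for $H$). When $\ell$ and $m$ are of different types, the two compositions actually commute \emph{vertex by vertex}, because the operations on the two coordinates are carried out independently; for instance, $\theta_i^{\mathrm{h}}(\theta_k^{\mathrm{v}}(u,v)) = (\eta_i^G(u),\eta_k^H(v)) = \theta_k^{\mathrm{v}}(\theta_i^{\mathrm{h}}(u,v))$. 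The main bookkeeping obstacle is in the strong product, where one has to simultaneously account for all nine type-combinations in (iii); but each combination either reduces to a single-factor instance of (iii) or to a direct commutation of independent coordinate-wise actions, so the verification is routine.

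Finally, properties (R) and (S) transfer coordinate-wise. Reflexivity $(\theta_m)^2(x,y) = (x,y)$ reduces to $(\eta_i^G)^2(x)=x$ and/or $(\eta_k^H)^2(y) = y$ depending on the type of $m$. Symmetry $\theta_m \circ \theta_\ell = \theta_\ell \circ \theta_m$ follows from the same case analysis as for (iii): same-type compositions inherit symmetry from the factor, cross-type compositions commute automatically. Since all conditions can be verified at every vertex $(x,y)$ of the product whenever $x$ is Ricci flat in $G$ and $y$ is Ricci flat in $H$, this establishes preservation of Ricci flatness and of each of the strengthened variants $(R)$-, $(S)$- and $(RS)$-Ricci flatness under all three graph products.
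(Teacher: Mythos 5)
Your construction is exactly the one the paper uses: the same three families of horizontal, vertical and diagonal maps built coordinate-wise from the factor maps, restricted appropriately for the Cartesian and tensor products, with conditions (i)--(iii) and (R), (S) checked by a type-by-type case analysis. So the approach is the same; however, one sub-claim in your verification is false and needs repair. It is not true that compositions of maps of \emph{different} types commute vertex by vertex. This does hold for a horizontal--vertical pair, as in your example, but a horizontal map and a diagonal map both act on the $G$-coordinate: one has
$\theta_i^{\mathrm{h}}\bigl(\theta_{j,k}^{\mathrm{d}}(x,y)\bigr)=(\eta_i^G\eta_j^G x,\,\eta_k^H y)$
whereas
$\theta_{j,k}^{\mathrm{d}}\bigl(\theta_i^{\mathrm{h}}(x,y)\bigr)=(\eta_j^G\eta_i^G x,\,\eta_k^H y)$,
and these agree pointwise only if $\eta_i^G$ and $\eta_j^G$ commute at $x$, which is not part of plain Ricci flatness (similarly for vertical--diagonal pairs). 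For condition (iii) this is harmless, since only the unions over the running index need to agree, and $\bigcup_i \eta_i^G\eta_j^G x=\bigcup_i \eta_j^G\eta_i^G x$ is precisely condition (iii) for $G$; so these mixed cases fall under your ``single-factor instance of (iii)'' alternative, not the ``automatic commutation'' one. The distinction genuinely matters for the (S) part: the pointwise identity $\theta_i^{\mathrm{h}}\theta_{j,k}^{\mathrm{d}}(x,y)=\theta_{j,k}^{\mathrm{d}}\theta_i^{\mathrm{h}}(x,y)$ is not automatic but is exactly where the symmetry hypothesis $\eta_i^G\eta_j^G x=\eta_j^G\eta_i^G x$ on the factor is used. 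With that correction the argument is complete and coincides with the paper's proof.
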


\begin{proof}
  Assume that $G$ and $H$ are Ricci flat at $x\in V_G$ and at
  $y\in V_H$, respectively, that is, there exist maps
  $\eta^G_i: B_1(x)\rightarrow V_G$ ($1\le i\le d_G$) and
  $\eta^H_k: B_1(y) \rightarrow V_H$ ($1\le k\le d_H$) satisfying the
  conditions (i),(ii),(iii) in Definition \ref{defn:ricci_flat}.

  Note that we have the inclusions
  $$ B_1^{G \times H}(x,y), B_1^{G \otimes H}(x,y) \subset B_1^{G \boxtimes H}(x,y). $$
  We define the following maps
  $\eta_i', \eta_k'', \eta_{jl}^\otimes: B_1^{G \boxtimes H}(x,y) \to
  V_G \times V_H$ (for $1 \le i,j \le d_G$, $1 \le k,l \le d_H$):
  \begin{eqnarray*}
    \eta'_{i}(u,v)&:=&(\eta^G_i(u), v),\\
    \eta''_{k}(u,v)&:=&(u, \eta^H_k(v)),\\
    \eta_{jl}^\otimes(u,v)&:=&(\eta^G_j(u),\eta^H_l(v)).
  \end{eqnarray*}
  Note that
  $$ \eta_{jl}^\otimes = \eta'_j \circ \eta''_l = \eta''_l \circ \eta'_j. $$
  We only consider the strong product case here, since all other
  products can be dealt with similarly by restrictions of the relevant
  $\eta$-maps to the corresponding $1$-balls. We now check properties
  (i), (ii) and (iii) of Definition \ref{defn:ricci_flat} for these maps
  on $B_1^{G \boxtimes H}(x,y)$.

  \smallskip
  
  To verify (i), we observe that $(u,v) \sim \eta'_{i}(u,v)$
  represents a horizontal edge in $G \boxtimes H$,
  $(u,v) \sim \eta''_{k}(u,v)$ represents a vertical edge and
  $(u,v) \sim \eta_{jl}^\otimes(u,v)$ represents a diagonal edge.

  \smallskip

  Next, we verify (ii): The above observation implies that
  $\eta_i'(u,v), \eta_k''(u,v)$ and $\eta_{jl}^\otimes(u,v)$ are
  mutually distinct for any choices of $i,j,k,l$.  Moreover, it is
  easy to check that
  $$ \eta_i'(u,v) \neq \eta_j'(u,v), \quad \eta_k''(u,v) \neq \eta_l''(u,v),
  \quad \eta_{ik}^\otimes(u,v) \neq \eta_{jl}^\otimes(u,v) $$
  for any choice of $i \neq j$ and $k \neq l$.

  \smallskip

  Now we verify (iii): We have
  \begin{eqnarray*}
    \bigcup_{j,l} \eta^\otimes_{jl}(\eta^\otimes_{ik}(x,y))
    &=&\bigcup_{j} \eta^G_j \eta^G_i x \times  \bigcup_{l} \eta^H_l \eta^H_k y\\
    &=& \bigcup_{j} \eta^G_i \eta^G_j x \times  \bigcup_{l} \eta^H_k \eta^H_l y \\
    &=& \bigcup_{j,l} \eta^\otimes_{ik}(\eta^\otimes_{jl}(x,y)).
  \end{eqnarray*}
  Similar commutation properties holds for the other families of
  $\eta$-maps, that is, we have
  $$ \bigcup_* \eta^* \eta^{**} (x,y) = \bigcup_* \eta^{**} \eta^* (x,y) $$
  where $\eta^*$ and $\eta^{**}$ are maps within the families
  $\eta'_i$, $\eta''_k$ and $\eta^\otimes_{jl}$. Combining these
  results, we obtain
   \begin{eqnarray*}
    \bigcup_{j} \eta'_j(\eta^{\otimes}_{ik}(x,y)) \cup \bigcup_{l} \eta''_l(\eta^{\otimes}_{ik}(x,y)) \cup \bigcup_{j,l} \eta^\otimes_{jl}(\eta^\otimes_{ik}(x,y)) \\
    = \bigcup_{j} \eta^{\otimes}_{ik}(\eta'_j(x,y)) \cup \bigcup_{j} \eta^{\otimes}_{ik}(\eta'_j(x,y)) \cup \bigcup_{j,l} \eta^\otimes_{ik}(\eta^\otimes_{jl}(x,y))
   \end{eqnarray*}
   and
  \begin{eqnarray*}
    \bigcup_{j} \eta'_j(\eta'_{i}(x,y)) \cup \bigcup_{l} \eta''_l(\eta'_{i}(x,y)) \cup \bigcup_{j,l} \eta^\otimes_{jl}(\eta'_{i}(x,y)) \\
    = \bigcup_{j} \eta'_{i}(\eta'_j(x,y)) \cup \bigcup_{l} \eta'_{i}(\eta''_l(x,y)) \cup \bigcup_{j,l} \eta'_{i}(\eta^\otimes_{jl}(x,y))
  \end{eqnarray*}
  and
  \begin{eqnarray*}
    \bigcup_{j} \eta'_j(\eta''_{k}(x,y)) \cup \bigcup_{l} \eta''_l(\eta''_{k}(x,y)) \cup \bigcup_{j,l} \eta^\otimes_{jl}(\eta''_{k}(x,y)) \\
    = \bigcup_{j} \eta''_{k}(\eta'_j(x,y)) \cup \bigcup_{l} \eta''_{k}(\eta''_l(x,y)) \cup \bigcup_{j,l} \eta''_{k}(\eta^\otimes_{jl}(x,y)).
  \end{eqnarray*}

  \smallskip
  
In conclusion, Ricci flatness is preserved for all three graph products.
  
  \smallskip

  Finally, we verify preservance of $(R)$-, $(S)$- and $(RS)$-Ricci
  flatness. Assume $(R)$-Ricci flatness at $x \in V_G$ and
  $y \in V_h$. $(R)$-Ricci flatness at $(x,y)$ follows now from
  $$ \left(\eta_{jl}^\otimes\right)^2(x,y) = (\left(\eta_j^G\right)^2(x),
  \left(\eta_l^H\right)^2(y)) = (x,y), $$
  and $(\eta_i')^2(x,y)=(\eta_k'')^2(x,y)=(x,y)$ can be checked
  similarly. Preservance of $(S)$-Ricci flatness follows from
  $$ \eta^* \eta^{**} (x,y) = \eta^{**} \eta^* (x,y) $$
  where $\eta^*$ and $\eta^{**}$ are maps within the families
  $\eta'_i$, $\eta''_k$ and $\eta^\otimes_{jl}$. 
\end{proof}

In the case of Cartesian products of two regular graphs $G,H$, there
are explicit curvature formulas in terms of curvatures of the factors:
%Bakry-\'Emery curvature ${\mathcal K}^{G \times H}(x,{\mathcal N})$ can
%  be found in \cite[Theorem 7.9 and Corollary 7.13]{CLP} and Ollivier
%  Ricci curvature $\kappa_0^{G \times H}(x,y)$ and
%  $\kappa_{LLY}^{G \times H}(x,y)$ can be found in \cite[Claim 1 and 2
%  in Proof of Theorem 3.1]{LLY}.
Bakry-\'Emery curvature
$\K_\infty^{G \times H}(x,y) = \min\{ \K_\infty^G(x),\K_\infty^H(y) \}$ can
be found in \cite[Corollary 7.13]{CLP} and Ollivier
Ricci curvature $\kappa_0^{G \times H}(x,y)$ and
$\kappa_{LLY}^{G \times H}(x,y)$ can be found in \cite[Claim 1 and 2
in Proof of Theorem 3.1]{LLY}. In particular, non-negativity of each
  of these curvature notions is preserved under Cartesian products. In
  our next result, we provide lower curvature bounds for horizontal
  and vertical edges of the strong product $G \boxtimes H$:

  \begin{thm}\label{thm:strong_LLY}
    Let $G$ and $H$ be two regular graphs with vertex degrees $d_G$
    and $d_H$, respectively.
    Lower Ollivier Ricci curvature bounds on horizontal edges and vertical
    edges are given by
    \begin{eqnarray*}
      \kappa_{*}((x_1,y_1),(x_2,y_1)) &\ge&
      \frac{d_G(d_H+1)}{d_{G\boxtimes H}} \kappa_{*}^G(x_1,x_2), \\
      \kappa_{*}((x_1,y_1),(x_1,y_2)) &\ge&
      \frac{d_H(d_G+1)}{d_{G\boxtimes H}} \kappa_{*}^H(y_1,y_2),
    \end{eqnarray*}                                          
    where $\kappa_*$ may refer to $\kappa_0$ or $\kappa_{LLY}$ and $d_{G \boxtimes H} = d_G + d_H + d_G d_H$ is the vertex degree of
    $G \boxtimes H$.
\end{thm}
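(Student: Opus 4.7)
The plan is to bound $\kappa_p^{G\boxtimes H}$ on horizontal edges by lifting optimal transports from $G$ to horizontal slices of $G\boxtimes H$; the vertical case then follows by swapping the roles of $G$ and $H$. Fix a horizontal edge $\{(x_1,y_1),(x_2,y_1)\}$, so $x_1\stackrel{G}{\sim}x_2$, and let $U_{x_i}^G:=\mu_{x_i}^{1/(d_G+1)}$ denote the uniform measure on $B_1^G(x_i)$ and $U_{y_1}^H$ the uniform measure on $S_1^H(y_1)$.

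First I would verify the convex decomposition
\[
\mu_{(x_i,y_1)}^p \;=\; \alpha\cdot\bigl(\mu_{x_i}^{p'}\otimes\delta_{y_1}\bigr)\;+\;\beta\cdot\bigl(U_{x_i}^G\otimes U_{y_1}^H\bigr)\qquad(i=1,2),
\]
where
\[
\alpha:=p+\tfrac{d_G(1-p)}{d_{G\boxtimes H}},\qquad\beta:=\tfrac{d_H(d_G+1)(1-p)}{d_{G\boxtimes H}},\qquad p':=\tfrac{p}{\alpha},
\]
so that $\alpha+\beta=1$. This is a direct calculation matching the mass on $V_G\times\{y_1\}$ (total $\alpha$, distributed as $\mu_{x_i}^{p'}$) against the mass on $V_G\times S_1^H(y_1)$ (total $\beta$, uniform on the support).

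Next, I would lift optimal plans to horizontal slices. For the first summand, lift an optimal plan in $\Pi(\mu_{x_1}^{p'},\mu_{x_2}^{p'})$ to the slice $V_G\times\{y_1\}$; for the second summand, for each $v\in S_1^H(y_1)$ lift an optimal plan in $\Pi(U_{x_1}^G,U_{x_2}^G)$ to the slice $V_G\times\{v\}$. Using the standard identity $d_{G\boxtimes H}((u,v),(u',v))=d_G(u,u')$ (each horizontal slice is an isometric copy of $G$), the total cost equals $\alpha(1-\kappa_{p'}^G(x_1,x_2))+\beta(1-\kappa_{1/(d_G+1)}^G(x_1,x_2))$, which gives the master inequality
\[
\kappa_p^{G\boxtimes H}\bigl((x_1,y_1),(x_2,y_1)\bigr)\;\ge\;\alpha\,\kappa_{p'}^G(x_1,x_2)+\beta\,\kappa_{1/(d_G+1)}^G(x_1,x_2).
\]

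Finally, I would specialize. Using $d_{G\boxtimes H}+1=(d_G+1)(d_H+1)$, the choice $p=1/(d_{G\boxtimes H}+1)$ forces $p'=1/(d_G+1)$, so the two curvatures on the right coincide and $\alpha+\beta=1$ collapses the master inequality to $\kappa_{1/(d_{G\boxtimes H}+1)}^{G\boxtimes H}\ge\kappa_{1/(d_G+1)}^G$; applying \eqref{eq:LLYOllconn} on both sides yields the $\kappa_{LLY}$ bound. For the $\kappa_0$ bound, take $p=0$, so that $\alpha=d_G/d_{G\boxtimes H}$, $\beta=d_H(d_G+1)/d_{G\boxtimes H}$, $p'=0$, and combine with the concavity bound $\kappa_{1/(d_G+1)}^G(x_1,x_2)\ge\tfrac{d_G}{d_G+1}\kappa_0^G(x_1,x_2)$ (coming from concavity of $p\mapsto\kappa_p^G$ together with $\kappa_1^G(x_1,x_2)=0$); the two contributions then sum to exactly $\tfrac{d_G(d_H+1)}{d_{G\boxtimes H}}\kappa_0^G(x_1,x_2)$. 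The main technical obstacle is verifying the decomposition and correctly bookkeeping the factor $d_G/(d_G+1)$ that shows up in both specializations; the slice-isometry statement is standard for strong products, and once the master inequality is in place the two specializations are essentially linear algebra.
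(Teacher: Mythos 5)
Your argument is correct, and its core mechanism is the same as the paper's: you build a transport plan on $G\boxtimes H$ by copying optimal plans of $G$ onto the horizontal slices $V_G\times\{v\}$, $v\in B_1^H(y_1)$, using that each slice carries the metric of $G$ (an inequality $\mathrm{dist}_{G\boxtimes H}((u,v),(u',v))\le \mathrm{dist}_G(u,u')$ would already suffice for the cost estimate). Your decomposition checks out: $\alpha-p=d_G(1-p)/d_{G\boxtimes H}$ reproduces the mass $\tfrac{1-p}{d_{G\boxtimes H}}$ on each horizontal neighbour in the $y_1$-slice, and $\beta/((d_G+1)d_H)=(1-p)/d_{G\boxtimes H}$ reproduces it on the vertical and diagonal neighbours; for $p=1/(d_{G\boxtimes H}+1)$ your lifted plan is literally the plan $\pi$ used in the paper, since $\alpha=\beta/d_H=\tfrac{1}{d_H+1}=\tfrac{1+d_G}{1+d_{G\boxtimes H}}$. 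The packaging differs in two respects. First, your master inequality $\kappa_p^{G\boxtimes H}\ge\alpha\,\kappa_{p'}^G+\beta\,\kappa_{1/(d_G+1)}^G$ is a statement for every idleness $p$, which is more information than the paper extracts. Second, for $\kappa_0$ the paper keeps the $p=0$ optimal plan $\pi_G^0$ on every slice and ships the leftover atom $(x_1,v)\mapsto(x_2,v)$ at cost $1$ for each $v\in S_1^H(y_1)$, whereas you put the uniform-ball plan on the side slices and then recover the stated constant via the concavity of $p\mapsto\kappa_p^G$ together with $\kappa_1^G=0$ (concavity is exactly the fact from \cite[Lemma 2.1]{LLY} quoted before \eqref{eq:k0lessklly}, so this step is available). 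Both routes land on the identical bound $\tfrac{d_G(d_H+1)}{d_{G\boxtimes H}}\kappa_0^G(x_1,x_2)$; yours has the advantage of a single construction covering both curvature notions, while the paper's $\kappa_0$ argument is self-contained and avoids any appeal to concavity. No gaps.
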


\begin{proof}[Proof of Theorem \ref{thm:strong_LLY}]
	
  Let us consider a horizontal edge $(x_1,y_1) \sim (x_2,y_1)$ where
  $x_1 \stackrel{G}{\sim}x_2$. We will prove this argument for Lin-Lu-Yau curvature first. Let
  $\pi_G\in \Pi(\mu_{x_1}^{1/(1+d_G)},\mu_{x_2}^{1/(1+d_G)})$ be an optimal transport
  plan, i.e., its cost is equal to $W_1^G(\mu_{x_1}^{1/(1+d_G)},\mu_{x_2}^{1/(1+d_G)})$.
  Now we define a function 
  $\pi: (V_G\times V_H)^2 \rightarrow [0,\infty)$ as follows:
  \begin{equation*}
  \pi\left((w_1,z_1),(w_2,z_2)\right) := 
  \begin{cases} \frac{1+d_G}{1+d_{G\boxtimes H}} \pi_G(w_1,w_2), & \text{if $z_1 = z_2 \in B_1(y_1)$,} \\
  0, & \text{otherwise.} \end{cases}
  \end{equation*}
  Now we verify the following marginal constraints showing that $\pi$ is indeed a transport plan $\pi \in \Pi(\mu_{(x_1,y_1)}^{1/(1+d_{G \boxtimes H})},\mu_{(x_2,y_2)}^{1/(1+d_{G \boxtimes H})})$: for fixed $(w_1,z_1) \in V_G \times V_H$,
  \begin{eqnarray*}
   \sum_{w_2,z_2} \pi((w_1,z_1),(w_2,z_2)) 
   &=& \frac{1+d_G}{1+d_{G\boxtimes H}}\cdot \1_{B_1(y_1)}(z_1) \sum_{w_2} \pi_G(w_1,w_2) \\ 
   &=& \frac{1+d_G}{1+d_{G\boxtimes H}} \cdot \1_{B_1(y_1)}(z_1) \cdot \mu_{x_1}^{1/(1+d_G)}(w_1) \\
   &=& \frac{1}{1+d_{G\boxtimes H}} \cdot \1_{B_1(y_1)}(z_1) \cdot \1_{B_1(x_1)}(w_1) \\
   &=& \frac{1}{1+d_{G\boxtimes H}} \cdot \1_{B_1(x_1,y_1)}(w_1,z_1) \\
   &=& \mu^{1/(1+d_{G \boxtimes H})}_{(x_1,y_1)}(w_1,z_1),
  \end{eqnarray*}
  and, similarly,
  $$ \sum_{w_1,z_1} \pi((w_1,z_1),(w_2,z_2)) = \mu^{1/(1+d_{G \boxtimes H})}_{(x_2,y_1)}(w_2,z_2). $$
 The cost of this transport plan can then be calculated as
\begin{eqnarray*}
{\rm cost}(\pi) 
&=& \sum_{(w_2,z_2)} \sum_{(w_1,z_1)} dist_{G\boxtimes H}\left((w_1,z_1),(w_2,z_2)\right)\pi((w_1,z_1),(w_2,z_2))\\
&=&  \sum_{z_1\in B_1(y_1)} \sum_{w_1,w_2} dist_{G}(w_1,w_2) \frac{1+d_G}{1+d_{G\boxtimes H}} \pi_G(w_1,w_2)\\
&=&\frac{(1+d_H)(1+d_G)}{1+d_{G \boxtimes H}}\sum_{w_1,w_2} dist_{G}(w_1,w_2)\pi_G(w_1,w_2)\\
&=& {\rm cost}(\pi_G).
\end{eqnarray*}
Recall that $\pi_G$ is assumed to be an optimal transport plan and, therefore,
\begin{multline*}
W_1^{G\boxtimes H}\left(\mu_{(x_1,y_1)}^{1/(1+d_{G\boxtimes H})},  \mu_{(x_2,y_1)}^{1/(1+d_{G\boxtimes H})}\right)\le {\rm cost}(\pi) \\ = {\rm cost}(\pi_G)=W_1^G(\mu_{x_1}^{1/(1+d_G)},\mu_{x_2}^{1/(1+d_G)}).
\end{multline*}
This inequality translates via Definition \ref{defn:Ollcurv} and
relation \eqref{eq:LLYOllconn} into:
$$\kappa_{LLY}((x_1,y_1),(x_2,y_1)) \ge \frac{d_G(d_H+1)}{d_{G\boxtimes H}} \kappa_{LLY}^G(x_1,x_2),$$
which gives the desired lower bound for $\kappa_{LLY}$ on the horizontal edge $(x_1,y_1) \sim (x_2,y_1)$. 

Now we prove a similar lower bound for $\kappa_0$. Let
$\pi^0_G\in \Pi(\mu_{x_1}^{0},\mu_{x_2}^{0})$ be an optimal transport
plan, whose cost is 
$${\rm cost}(\pi^0_G)=\sum_{\substack{w_1,w_2\in V_G \\{(w_1,w_2)\not=(x_1,x_2)}}} dist_{G}(w_1,w_2)  \pi^0_G(w_1,w_2),$$
where the condition $(w_1,w_2)\not=(x_1,x_2)$ on the summation can be imposed because $\pi^0_G(x_1,x_2)=0$ due to marginal constraints of $\pi^0_G$.

Define a function 
$\pi^0: (V_G\times V_H)^2 \rightarrow [0,\infty)$ as follows:
\begin{multline*}
\pi^0\left((w_1,z_1),(w_2,z_2)\right) \\ := 
\begin{cases} 
\frac{d_G}{d_{G\boxtimes H}} \pi^0_G(w_1,w_2), & \text{if $z_1 = z_2 \in B_1(y_1)$ and $(w_1,w_2)\not=(x_1,x_2)$, } \\
\frac{1}{d_{G\boxtimes H}}, & \text{if $z_1 = z_2 \in S_1(y_1)$ and $(w_1,w_2)=(x_1,x_2)$, } \\
%0, & \text{if $z_1 = z_2 = y_1$ and $(w_1,w_2)=(x_1,x_2)$ } \\
0, & \text{otherwise.} \end{cases}
\end{multline*}
Now we verify that $\pi^0 \in \Pi(\mu_{(x_1,y_1)}^{0},\mu_{(x_2,y_1)}^{0})$:
Let $(w_1,z_1) \in V_G \times V_H$. We distinguish two cases:
\begin{enumerate}
\item If $w_1\not=x_1$ we have
\begin{eqnarray*}
	\sum_{w_2,z_2} \pi^0((w_1,z_1),(w_2,z_2))
	&=& \frac{d_G}{d_{G\boxtimes H}}\cdot \1_{B_1(y_1)}(z_1) \sum_{w_2} \pi^0_G(w_1,w_2) \\ 
	&=& \frac{d_G}{d_{G\boxtimes H}} \cdot \1_{B_1(y_1)}(z_1) \cdot \mu_{x_1}^{0}(w_1) \\
	&=& \frac{1}{d_{G\boxtimes H}} \cdot \1_{B_1(y_1)}(z_1) \cdot \1_{S_1(x_1)}(w_1)\\
	&=& \mu^{0}_{(x_1,y_1)}(w_1,z_1).
\end{eqnarray*}
The last equality follows from the fact that $w_1 \neq x_1$ implies
\begin{multline*}
\1_{B_1(y_1)}(z_1) \cdot \1_{S_1(x_1)}(w_1) = \1_{B_1(y_1)}(z_1) \cdot \1_{B_1(x_1)}(w_1) \\ = \1_{B_1(x_1,y_1)}(w_1,z_1) = \1_{S_1(x_1,y_1)}(w_1,z_1).
\end{multline*}
\item If $w_1=x_1$ we have
\begin{flalign*}
  \sum_{w_2,z_2} &\pi^0((x_1,z_1),(w_2,z_2)) \hspace*{8cm} \\
  &= \1_{S_1(y_1)}(z_1) \frac{1}{d_{G \boxtimes H}} + \1_{B_1(y_1)}(z_1) \frac{d_G}{d_{G \boxtimes H}} \sum_{w_2 \neq x_2} \underbrace{\pi_G^0(x_1,w_2)}_{=0} \\
  &= \frac{1}{d_{G \boxtimes H}} \1_{S_1(y_1)}(z_1) = \mu_{(x_1,y_1)}^0(x_1,z_1).
\end{flalign*}
\end{enumerate}
The verification of
$$ \sum_{w_1,z_1} \pi^0((w_1,z_1),(w_2,z_2)) = \mu_{(x_2,y_1)}^0(w_2,z_2) $$
is done similarly. The cost of $\pi^0$ can then be calculated as
\begin{eqnarray*}
	{\rm cost}(\pi^0) 
	&=& \sum_{(w_2,z_2)} \sum_{(w_1,z_1)} dist_{G\boxtimes H}\left((w_1,z_1),(w_2,z_2)\right)\pi^0((w_1,z_1),(w_2,z_2))\\
	&=&  \sum_{z_1\in B_1(y_1)} \sum_{(w_1,w_2)\not=(x_1,x_2)} dist_{G}(w_1,w_2) \frac{d_G}{d_{G \boxtimes H}} \pi^0_G(w_1,w_2)\\
	& & +\sum_{z_1\in S_1(y_1)}
	dist_{G}(x_1,x_2) \frac{1}{d_{G \boxtimes H}}\\
	&=& \frac{(1+d_H)d_G}{d_{G\boxtimes H}}{\rm cost}(\pi^0_G)+ \frac{d_H}{d_{G\boxtimes H}}.
\end{eqnarray*}

Therefore, we have
$$W_1^{G\boxtimes H}\left(\mu_{(x_1,y_1)}^{0},  \mu_{(x_2,y_1)}^{0}\right)\le {\rm cost}(\pi^0)=\frac{(1+d_H)d_G}{d_{G\boxtimes H}}W_1^G(\mu_{x_1}^{0},\mu_{x_2}^{0})+\frac{d_H}{d_{G\boxtimes H}},$$
or equivalently 
$$\kappa_{0}((x_1,y_1),(x_2,y_1)) \ge \frac{d_G(d_H+1)}{d_{G\boxtimes H}} \kappa_{0}^G(x_1,x_2),$$
which gives the desired lower bound for $\kappa_0$.

In the same way we obtain analogous results for vertical edges:
$$\kappa_{*}((x_1,y_1),(x_1,y_2)) \ge \frac{d_H(d_G+1)}{d_{G\boxtimes H}} \kappa_{*}^H(y_1,y_2).$$
\end{proof}

\begin{cor} \label{cor:strong_prod_nonneg}
  Let $G$ and $H$ be two regular graphs with non-negative $\kappa_0$
  (or $\kappa_{LLY}$). Then all horizontal and vertical edges of
  $G \boxtimes H$ have also non-negative $\kappa_0$ (or
  $\kappa_{LLY}$).
\end{cor}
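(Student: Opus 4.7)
The plan is to observe that this corollary is an immediate consequence of Theorem \ref{thm:strong_LLY}. Both lower bounds provided there have the form
\[
\kappa_*((x_1,y_1),(x_2,y_1)) \ge C_1 \cdot \kappa_*^G(x_1,x_2), \qquad \kappa_*((x_1,y_1),(x_1,y_2)) \ge C_2 \cdot \kappa_*^H(y_1,y_2),
\]
where $C_1 = \frac{d_G(d_H+1)}{d_{G\boxtimes H}}$ and $C_2 = \frac{d_H(d_G+1)}{d_{G\boxtimes H}}$, and $\kappa_*$ refers to either $\kappa_0$ or $\kappa_{LLY}$.

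First I would note that both coefficients $C_1$ and $C_2$ are strictly positive, since $G$ and $H$ are connected graphs with $d_G, d_H \ge 1$, which implies $d_{G\boxtimes H} = d_G + d_H + d_G d_H > 0$. Then, for any horizontal edge $\{(x_1,y_1),(x_2,y_1)\} \in E_{\rm hor}$ of $G \boxtimes H$, the edge $\{x_1,x_2\} \in E_G$ satisfies $\kappa_*^G(x_1,x_2) \ge 0$ by assumption, and multiplying by the positive constant $C_1$ yields $\kappa_*((x_1,y_1),(x_2,y_1)) \ge 0$. Similarly, for any vertical edge $\{(x_1,y_1),(x_1,y_2)\} \in E_{\rm vert}$, the assumed non-negativity of $\kappa_*^H(y_1,y_2)$ together with $C_2 > 0$ yields $\kappa_*((x_1,y_1),(x_1,y_2)) \ge 0$.

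There is no real obstacle here, since all the work has been done in the preceding theorem. The only thing worth emphasising is that the statement applies to horizontal and vertical edges only; the diagonal edges of the strong product need not inherit non-negativity, which is precisely the content highlighted in the introduction as a contrast to Cartesian products.
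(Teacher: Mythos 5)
Your proposal is correct and is exactly the argument the paper intends: the corollary is stated as an immediate consequence of Theorem \ref{thm:strong_LLY}, obtained by noting that the coefficients $\frac{d_G(d_H+1)}{d_{G\boxtimes H}}$ and $\frac{d_H(d_G+1)}{d_{G\boxtimes H}}$ are positive and multiplying the assumed non-negative factor curvatures through. Nothing further is needed.
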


It turns out, however, that the statement of Corollary
\ref{cor:strong_prod_nonneg} is no longer true for diagonal edges, as
the following example shows.

\begin{example}
  Let $G$ be a $4$-regular graph with an induced $2$-ball
  $B_2(v_0) = \{v_0,\dots,v_9\}$ as shown in Figure
  \ref{fig:strong_prod_counterexample}.  \begin{figure}[h!]
\begin{center}
  \includegraphics[width=\textwidth]{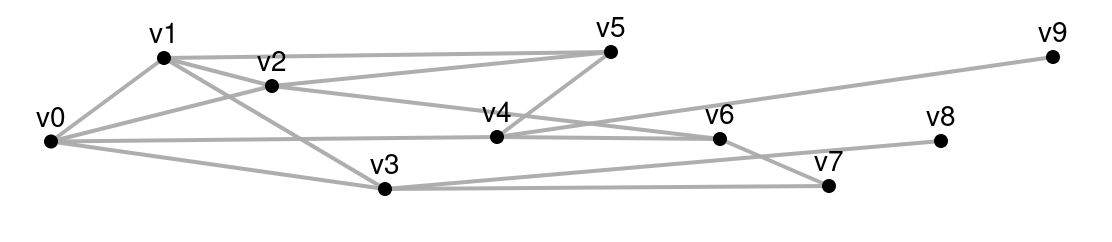}
\caption{Induced $2$-ball of a quartic graph with
  $\K_\infty(v_0) = 0.013$ and
  $\kappa_{LLY}(v_0,v_1) = 2 \kappa_0(v_0,v_1) = 1$,
  $\kappa_{LLY}(v_0,v_2) = 2 \kappa_0(v_0,v_2) = 0.5$ and
  $\kappa_{LLY}(v_0,v_3) = \kappa_0(v_0,v_3) = \kappa_{LLY}(v_0,v_4) =
  \kappa_0(v_0,v_4) = 0$.}
   \label{fig:strong_prod_counterexample}
   \end{center}
\end{figure}
 Then $\kappa_0(v_0,v_i) \ge 0$
  for $1 \le i \le 4$ and $\K_\infty(v_0) > 0$. Let
  $H = P_\infty$ be the bi-infinite paths with vertices $w_j$,
  $j \in \Z$. Then
  $\kappa_0(w_0,w_{\pm 1}) = \kappa_{LLY}(w_0,w_{\pm 1}) =0$ and
  $\K_\infty(w_0) = 0$.
  
\begin{figure}[h!]
\begin{center}
\begin{tikzpicture}[thick, scale=1]
\node[inner sep=0pt] (G) at (0,0)
{\includegraphics[width=\textwidth]{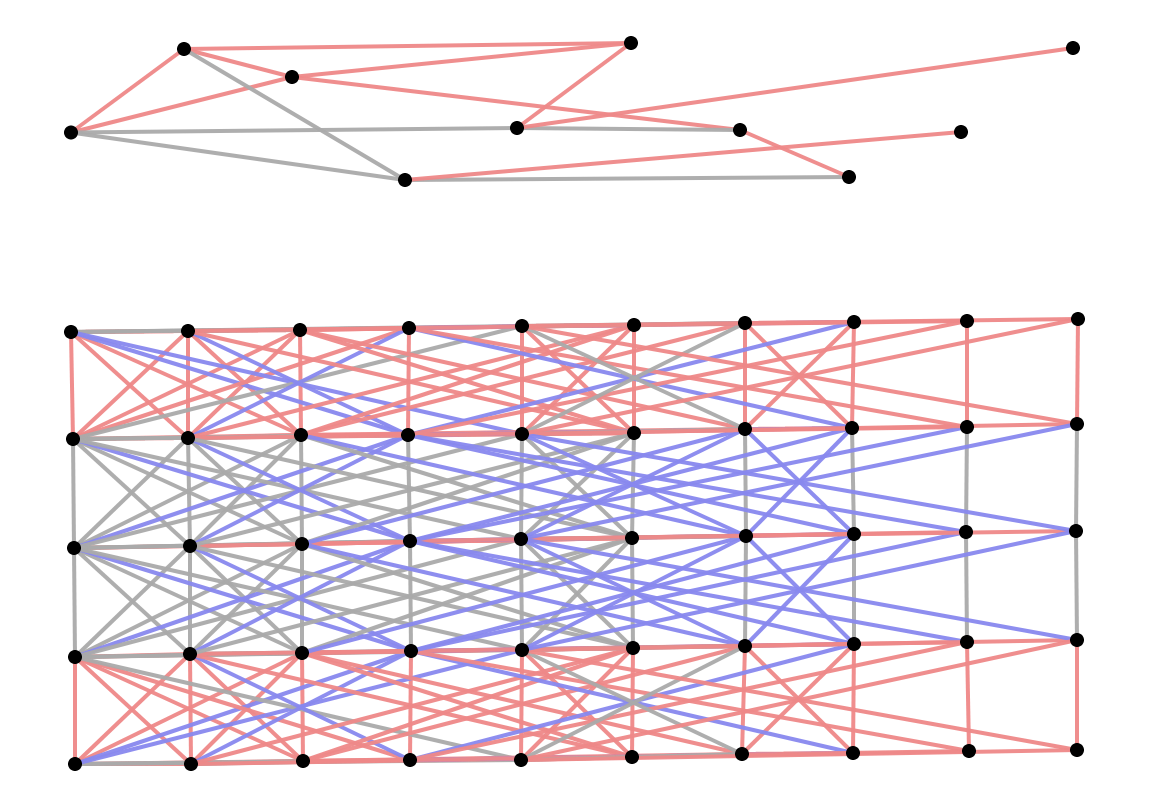}};

\draw (-5.55,3.23) node(v0) {$v_0$};
\draw (-4.3,4.15) node(v1) {$v_1$};
\draw (-3.12,3.84) node(v2) {$v_2$};
\draw (-1.87,2.71) node(v3) {$v_3$};
\draw (-0.65,3.26) node(v4) {$v_4$};
\draw (0.61,4.2) node(v5) {$v_5$};
\draw (1.80,3.25) node(v6) {$v_6$};
\draw (3.01,2.74) node(v7) {$v_7$};
\draw (4.23,3.25) node(v8) {$v_8$};
\draw (5.47,4.15) node(v9) {$v_9$};

\draw (-5.55,1.10) node(v02) {$(v_0,w_{2})$};
\draw (-5.55,-0.05) node(v01) {$(v_0,w_{1})$};
\draw (-5.55,-1.30) node(v00) {$\boldsymbol{(v_0,w_{0})}$};
\draw (-5.55,-2.50) node(v0-1) {$(v_0,w_{-1})$};
\draw (-5.55,-3.70) node(v0-2) {$(v_0,w_{-2})$};

\draw (-1.87,-0.05) node(v31) {$\boldsymbol{(v_3,w_{1})}$};
\end{tikzpicture}
	\caption{Local Ollivier Ricci curvatures $\kappa_{LLY}$ of $G$ and
    $G \boxtimes P_\infty$ at edges incident to $v_0$ and $(v_0,w_0)$,
    respectively. Positive/negative/zero curvatures of edges is represented by
    the colours red/blue/grey. Every horizontal line of the lower graph
    represents a projection of $G$.}
    \label{fig:strong_prod_lly}
\end{center}
\end{figure}

%\begin{figure}[h!] %article scale
%\begin{center}
%\begin{tikzpicture}[thick, scale=1]
%\node[inner sep=0pt] (G) at (0,0)
%{\includegraphics[width=\textwidth]{strong-prod-lly.png}};
%
%\draw (-6,3.5) node(v0) {\large $v_0$};
%\draw (-4.7,4.5) node(v1) {\large $v_1$};
%\draw (-3.4,4.14) node(v2) {\large $v_2$};
%\draw (-2.0,2.9) node(v3) {\large $v_3$};
%\draw (-0.7,3.52) node(v4) {\large $v_4$};
%\draw (0.65,4.55) node(v5) {\large $v_5$};
%\draw (1.95,3.5) node(v6) {\large $v_6$};
%\draw (3.25,2.94) node(v7) {\large $v_7$};
%\draw (4.57,3.5) node(v8) {\large $v_8$};
%\draw (5.89,4.5) node(v9) {\large $v_9$};
%
%\draw (-6,1.2) node(v02) {\large $(v_0,w_{2})$};
%\draw (-6,-0.1) node(v01) {\large $(v_0,w_{1})$};
%\draw (-6,-1.4) node(v00) {\large $\boldsymbol{(v_0,w_{0})}$};
%\draw (-6,-2.7) node(v0-1) {\large $(v_0,w_{-1})$};
%\draw (-6,-4) node(v0-2) {\large $(v_0,w_{-2})$};
%
%\draw (-2,-0.1) node(v31) {\large $\boldsymbol{(v_3,w_{1})}$};
%\end{tikzpicture}
%	\caption{Local Ollivier Ricci curvatures $\kappa_{LLY}$ of $G$ and
%    $G \boxtimes P_\infty$ at edges incident to $v_0$ and $(v_0,w_0)$,
%    respectively. Positive/negative/zero curvatures of edges is represented by
%    the colours red/blue/grey. Every horizontal line of the lower graph
%    represents a projection of $G$.}
%    \label{fig:strong_prod_lly}
%\end{center}
%\end{figure}

  However, the strong product $G \boxtimes H$ has negative Ollivier
  Ricci curvatures on the following diagonal edges (see Figure
  \ref{fig:strong_prod_lly}):
  $$ \kappa_0((v_0,w_0),(v_3,w_{\pm 1}) =
  \kappa_{LLY}((v_0,w_0),(v_3,w_{\pm 1}) = -0.071, $$ and negative
  Bakry-\'Emery curvature at $(v_0,w_0)$ (see Figure
  \ref{fig:strong_prod_BE}):
  $$ \K_\infty(v_0,w_0) = -0.062. $$

  \begin{figure}[h!]
\begin{center}
  \includegraphics[width=\textwidth]{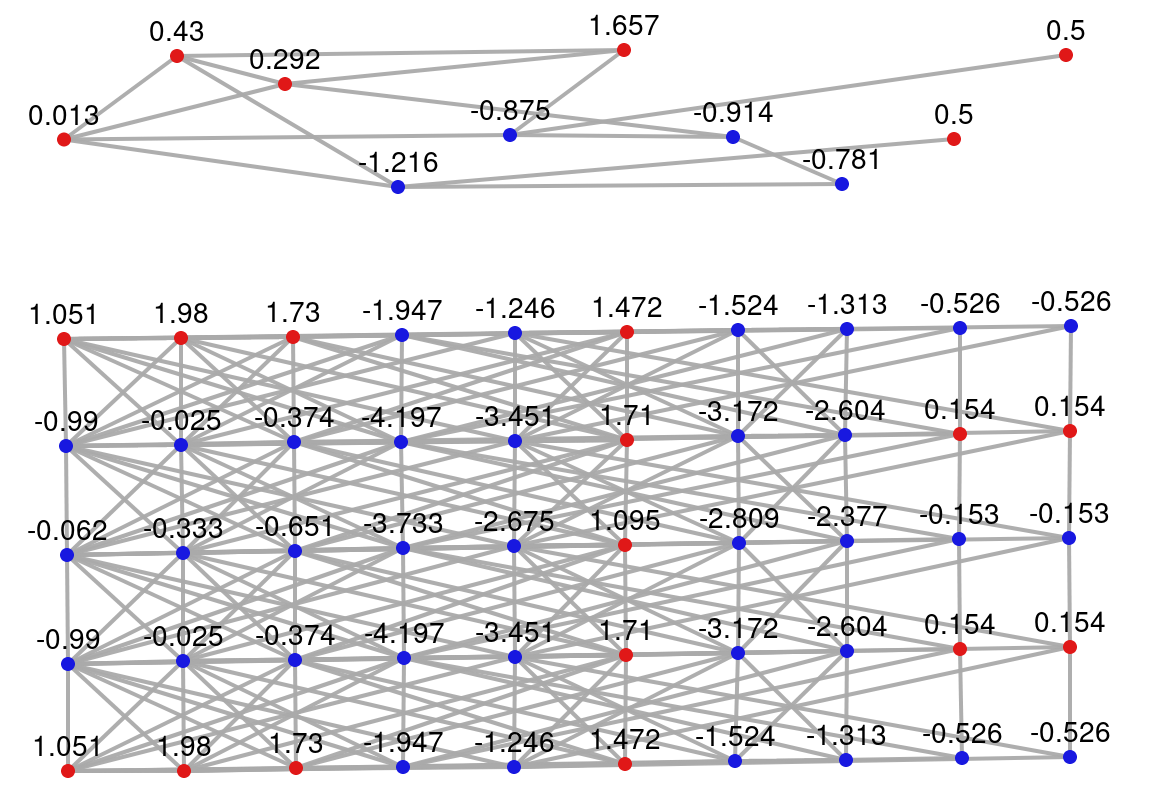}
  \caption{Local Bakry-\'Emery curvatures of $G$ and
    $G \boxtimes P_\infty$ at $v_0$ and
    $(v_0,w_0)$. Positive/negative curvatures of vertices is
    represented by the colours red/blue. Every horizontal line of
    the lower graph represents a projection of $G$.}
   \label{fig:strong_prod_BE}
   \end{center}
\end{figure}

\end{example}

\begin{remark} The previous example shows for strong products that
  non-negativity of curvatures is generally not preserved for diagonal
  edges. The same example can be used to show that this phenomenen
  appears also in the case of tensor products, where only diagonal
  edges are present.

  Another interesting question about graphs products is the following:
  In the case of Cartesian products, the full curvature function (as
  function of the dimension $\mathcal N$) at a vertex $(x,y)$ is
  completely determined by the curvature functions of the factors at
  the vertices $x$ and $y$ (see \cite[Theorem 7.9]{CLP}):
  $$ \K^{G \times H}_{(x,y)} = \K^G_x * \K^H_y, $$
  where $*$ is a special operation defined in \cite[Definition
  7.1]{CLP}. We would like to know whether a similar formula (with a
  suitably defined operation) can be proved for tensor products and
  strong products.
\end{remark}

\section{Distance-regular graphs}
\label{sec:distreg}

In this section we turn our focus on distance-regular graphs of
girth $4$, which is an interesting family of triangle-free graphs with
maximal curvature values for $\kappa_0,\kappa_{LLY}$ and
$\K_\infty$. Distance-regular graphs are defined as follows:

\begin{defn} A regular graph $G=(V,E)$ is called \emph{distance-regular}
  if, for any pair $x,y \in V$ of vertices and any $r,t \ge 0$
  the cardinality of $S_r(x) \cap S_t(y)$ depends only on $r,t,d(x,y)$. 

  The \emph{intersection array} of a distance-regular graph $G=(V,E)$
  of vertex degree $d$ is defined as an array of integers:
  $$ \{b_0,b_1,\dots,b_{d-1};c_1,\dots,c_d\}, $$
  defined as follows: Fix $x \in V$. Then, for $0 \le i \le d-1$ and
  $1 \le j \le d$, we set $b_i = d_x^+(z)$ for every $z \in S_i(x)$
  and $c_j = d_x^-(z)$ for every $z \in S_j(x)$.
\end{defn}

\begin{thm} \label{thm:distreg}
  Let $G=(V,E)$ be a distance-regular graph of vertex degree $d$ and
  girth $4$. Then we have
  \begin{equation} \label{eq:distregOR}
    \text{$\kappa_0(x,y) = 0$\,\, and\ $\kappa_{LLY}(x,y) = \frac{2}{d}$\,\, for all $\{x,y\} \in E$,}
  \end{equation}
  and 
  \begin{equation} \label{eq:distregBE}
    \text{$\K_\infty(x) = 2$\,\, for all $x \in V$.}
  \end{equation}
\end{thm}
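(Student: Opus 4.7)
The plan is to prove the three curvature equalities in \eqref{eq:distregOR}--\eqref{eq:distregBE} separately, exploiting the rigidity of the local combinatorics forced by distance-regularity together with girth $4$. Two consequences to record up front: such graphs have $a_1=0$ (no triangle through any edge) and $c_2\ge 2$ (a $4$-cycle exhibits two common neighbours for each pair of its vertices at distance $2$).

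For the Ollivier Ricci curvatures I would invoke the matching characterisation in Remark \ref{rem:curvcomb}. Fix an edge $\{x,y\}\in E$ and note that any $u\in S_1(x)\setminus\{y\}$ is at distance $2$ from $y$ (since $a_1=0$), so by distance-regularity $u$ and $y$ share exactly $c_2$ common neighbours, one of which is $x$. Consequently the bipartite graph on $(S_1(x)\setminus\{y\},\, S_1(y)\setminus\{x\})$ with edges inherited from $G$ is $(c_2-1)$-regular with both parts of size $d-1$, and K\"onig's theorem delivers a perfect matching. By Remark \ref{rem:curvcomb}(b) this matching already certifies $\kappa_{LLY}(x,y)=\frac{2}{d}$, and augmenting it with the pair $y\leftrightarrow x$ certifies $\kappa_0(x,y)=0$ via Remark \ref{rem:curvcomb}(a).

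For the Bakry-\'Emery equality, the upper bound $\K_\infty(x)\le 2$ is immediate from Proposition \ref{prop:uppbdcurv}(iii) because $x$ is triangle-free. For the matching lower bound $\K_\infty(x)\ge 2$, I plan to show that every vertex $x$ is $(R)$-Ricci flat and then apply Theorem \ref{thm:BEcurvRF}(b). Enumerating $S_1(x)=\{y_1,\dots,y_d\}$ and setting $\eta_i(x)=y_i$, $\eta_i(y_i)=x$, the only remaining choice is $\eta_i(y_j)\in L_{ij}:=S_1(y_i)\cap S_1(y_j)\setminus\{x\}$ for each ordered pair $i\ne j$. The resulting off-diagonal array $A_{ij}=\eta_i(y_j)$ must then have each row a bijection onto $S_1(y_i)\setminus\{x\}$ (forced by Lemma \ref{lem:crucial}) and each column a bijection onto $S_1(y_j)\setminus\{x\}$ (this is condition (iii) of Definition \ref{defn:ricci_flat}). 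Distance-regularity makes the incidence pair with points $S_1(x)$ and blocks $S_2(x)$ into a $2$-$(d,c_2,c_2-1)$ design, so every two points lie in exactly $c_2-1$ common blocks.

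The technical heart of the argument is this Latin-type completion of $A$. For $c_2=2$ the condition $d_x^-(z)\le 2$ holds and Theorem \ref{cor:curvimp}(b) applies directly; for $c_2>2$ one must construct $A$ by hand. My approach is a two-stage matching: K\"onig applied to the $(c_2-1)$-regular ``row-availability'' graph at each $y_i$ yields row bijections $\sigma_i$, and then augmenting-path swaps within the $2$-design coordinate the family $\{\sigma_i\}_{i=1}^{d}$ so that the column-bijection condition holds simultaneously. The fact that every pair of points sits in exactly $c_2-1$ blocks---not merely at least one---is what supplies the swap freedom needed, and this is precisely where distance-regularity is used beyond the triangle-freeness that excludes the general girth-$4$ counterexamples following Theorem \ref{thm:curvRic}.
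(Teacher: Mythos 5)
Your argument for \eqref{eq:distregOR} is correct and essentially the paper's: the observation that the bipartite graph between $S_1(x)\setminus\{y\}$ and $S_1(y)\setminus\{x\}$ is $(c_2-1)$-regular (the paper phrases the same count as a Hall-condition double count) yields the perfect matching, and Remark \ref{rem:curvcomb} then gives $\kappa_{LLY}=\frac{2}{d}$ and $\kappa_0=0$. This half is fine.

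The Bakry-\'Emery half has a fatal gap: your plan to prove $\K_\infty(x)\ge 2$ by showing that every vertex of a distance-regular graph of girth $4$ is $(R)$-Ricci flat cannot work, because that statement is \emph{false}. The incidence graph of the $(11,6,3)$-design (Example \ref{ex:incidence_graph}) is distance-regular with intersection array $\{6,5,3;1,3,6\}$, hence triangle-free with $c_2=3$ and girth $4$, yet the paper proves by a short case analysis that no zero-diagonal completion of the associated matrix $A$ exists --- i.e.\ no vertex is $(R)$-Ricci flat, even though $\K_\infty(x)=2$ does hold there. This graph sits exactly in your ``$c_2>2$'' branch, and its $1$-sphere carries exactly the $2$-$(d,c_2,c_2-1)$ design structure ($2$-$(6,3,2)$) that you claim supplies enough ``swap freedom'' to coordinate the row bijections into a symmetric-free, zero-diagonal Latin-type array; the counterexample shows that the design property does not guarantee such a completion, so the ``augmenting-path swaps'' step is not merely unproved but impossible in general. (Your $c_2=2$ branch via Theorem \ref{cor:curvimp}(b) is sound, but it covers only part of the class.) The paper instead obtains $\K_\infty(x)=2$ by a spectral argument: triangle-freeness reduces the relevant operator $\Delta_{S_1''(x)}$ to the weighted sphere Laplacian $\Delta_{S_1'(x)}=\frac{c_2-1}{c_2}\Delta_{K_d}$, whose first nonzero eigenvalue $\frac{c_2-1}{c_2}d\ge\frac{d}{2}$ (using $c_2\ge 2$) triggers the curvature formula $\K_\infty(x)=\frac{3+d-{\rm av}_1^+(x)}{2}=2$ from \cite{CLP}. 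You would need to replace your Ricci-flatness construction by an argument of this kind, or by a direct verification of $\Gamma_2\ge 2\Gamma$ at $x$.
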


Note that the curvature values in \eqref{eq:distregOR} and
\eqref{eq:distregBE} are upper curvature bounds for any triangle-free
$d$-regular graph by Proposition \ref{prop:uppbdcurv}.

Theorem \ref{thm:distreg} is a generalization of \cite[Theorem
4.10]{BCDDFP} and \cite[Corollary 11.7(i) in the arXiv
version]{CLP}, which are both concerned with the special case of
strongly regular graphs. Even though the proofs for this special
case carry over to the much larger class of distance-regular graphs,
we present them here for the reader's convenience.

\begin{proof} Let $G=(V,E)$ be a distance-regular graph of vertex
  degree $d$ and girth $4$ and $\{ x,y \} \in E$. By Remark
  \ref{rem:curvcomb}(b), it suffices to show the existence of a
  perfect matching between $S_1(x) \backslash \{ y \}$ and
  $S_1(y) \backslash \{ x \}$ to conclude
  \begin{equation} \label{eq:k_LLY-distreg}
  \kappa_{LLY}(x,y) = \frac{2}{d}.
  \end{equation}
  Let $H$ be the
  induced subgraph of the union of $S_1(x) \backslash \{ y \}$ and
  $S_1(y) \backslash \{ x \}$. Note that $H$ is bipartite since $G$ is
  triangle-free. Let $X \subset S_1(x) \backslash \{ y \}$ and $Y$
  be the set of neighbours of $X$ in $S_1(y) \backslash \{ x \}$. The set $Y$ is nonempty due to the girth 4 assumption.
  Then we have the following double-counting of
  the edges between $X$ and $Y$:
  \begin{equation} \label{eq:doublcountedges}
  \sum_{w \in X} d_w^H = | E(X,Y) | \le \sum_{z \in Y} d_z^H,
  \end{equation}
  where $d_w^H$ is the vertex degree of $w$ in $H$. Using
  distance-regularity, we obtain $d_w^H = d_z^H = c_2-1$ and
  \eqref{eq:doublcountedges} implies $|X| \le |Y|$. We can now apply
  Hall's Marriage Theorem to conclude that there is a perfect matching
  between $S_1(x) \backslash \{ y \}$ and $S_1(y) \backslash \{ x \}$.

  \medskip
  
  By Theorem \ref{thm:kLLYk0comp}, \eqref{eq:k_LLY-distreg} implies
  $\kappa_0(x,y) \ge 0$. Combining this with Proposition
  \ref{prop:uppbdcurv}(i), we conclude $\kappa_0(x,y) = 0$.

  \medskip
  
  For the calculation of the Bakry-\'Emery curvature we employ the
  method presented at the beginning of Section 8 of \cite{CLP} and the
  notation introduced there. In view of Theorem 8.1(i) in \cite{CLP},
  we only need to verify that
  $\lambda_1 = \lambda_1(\Delta_{S_1''(x)}) \ge \frac{d}{2}$, since
  then
  $$ \K_\infty(x) = \frac{3+d-{\rm av}^+_1(x)}{2} = \frac{3+d-(d-1)}{2} = 2. $$

  \medskip
  
  Triangle-freeness of $G$ implies
  $$ \Delta_{S_1''(x)} = \Delta_{S_1(x)} + \Delta_{S_1'(x)} =
  \Delta_{S_1'(x)}, $$
  where $\Delta_{S_1'(x)}$ is the weighted Laplacian on the $1$-sphere
  $S_1(x)$ with the following weights:
  $$ \text{$w_{y_1y_2}' = \sum_{\substack{z \in S_2(x)\\ y_1 \sim z \sim y_2}} \frac{1}{d_x^-(z)}$
  for all $y_1,y_2 \in S_1(x), y_1 \neq y_2$.} $$
  Since $G$ is distance-regular, we obtain $d_x^-(z) = c_2$ and
  $$ | \{ z \in S_2(x): y_1 \sim z \sim y_2 \} | = c_2 - 1. $$
  This implies $w_{y_1y_2}' = \frac{c_2-1}{c_2}$ and, therefore, the
  Laplacian $\Delta_{S_1'(x)}$ is $\frac{c_2-1}{c_2} \Delta_{K_d}$, where
  $\Delta_{K_d}$ is the non-normalized Laplacian of the complete
  graph $K_d$. Consequently, we have
  $$ \lambda_1(\Delta_{S_1'(x)}) = \frac{c_2-1}{c_2} \lambda_1(\Delta_{K_d})
  = \frac{c_2-1}{c_2} d \ge \frac{d}{2}, $$
  since $c_2 \ge 2$ because $G$ has girth $4$.
\end{proof}

It is tempting to assume that distance-regular graphs of girth $4$ are
always $(R)$-Ricci flat and then using Theorems \ref{thm:ORcurvRF}
and \ref{thm:BEcurvRF}(b) to conclude the statement of Theorem
\ref{thm:distreg}. However, the following example shows that this
assumption is {\bf{not}} always true. It remains an open question, however,
whether every distance-regular graph of girth $4$ is Ricci flat. 

\begin{example}[Incidence graph of $(11,6,3)$-design] \label{ex:incidence_graph}
	
  This is a distance-regular graph with intersection array
  $\{6,5,3;1,3,6\}$ (see \cite{Distreg}).
	
  The structure of the incomplete $2$-ball around a vertex $x$
  is given by:
	\begin{eqnarray*}
	S_1(x)& = & \{v_1,...,v_6\} \quad \text{and } \quad S_2(x)=\{v_7,...,v_{16}\}\\
	v_1 & \sim & v_8,v_{11},v_{13},v_{14},v_{15}\\
	v_2 & \sim & v_7,v_{10},v_{11},v_{12},v_{13}\\
	v_3 & \sim & v_9,v_{10},v_{11},v_{15},v_{16}\\
	v_4 & \sim & v_7,v_8,v_{10},v_{14},v_{16}\\
	v_5 & \sim & v_8,v_9,v_{12},v_{13},v_{16}\\
	v_6 & \sim & v_7,v_9,v_{12},v_{14},v_{15}
	\end{eqnarray*}

	We give an indirect prove that this graph is not $(R)$-Ricci flat. Assume otherwise, i.e., there exists an associated matrix $A$ with only $0$ entries on diagonal. The other possible entries of $A$ listed as below:
  
  $$ \left( \begin{array}{c|c|c|c|c|c|c}
  		 & 1 & 2 & 3 & 4 & 5 & 6 \\
  		 \hline
  		 1 & 0 & \textbf{\textcolor{red}{11}}, \textbf{\textcolor{blue}{13}} & \textcolor{blue}{11},\textcolor{red}{15} & \textcolor{red}{8},\textcolor{blue}{14} & \textcolor{blue}{8},\textcolor{red}{13} & \textcolor{red}{14},\textcolor{blue}{15} \\
  		 
  		 2 & \textcolor{blue}{11},\textcolor{red}{13} & 0 & \textcolor{blue}{10},\textcolor{red}{11} & \textcolor{blue}{7},\textcolor{red}{10} & \textcolor{red}{12},\textcolor{blue}{13} & \textcolor{red}{7},\textcolor{blue}{12} \\
  		 3 & \textcolor{red}{11},\textcolor{blue}{15} & \textcolor{red}{10},\textcolor{blue}{11} & 0 & \textcolor{blue}{10},\textcolor{red}{16} & \textcolor{red}{9},\textcolor{blue}{16} & \textcolor{blue}{9},\textcolor{red}{15} \\
  		 4 & 8,14 & 7,10 & 10,16 & 0 & \underline{8,16} & \underline{7,14} \\
  		 5 & 8,13 & 12,13 & 9,16 & 8,16 & 0 & 9,12 \\
  		 6 & 14,15 & 7,12 & 9,15 & 7,14 & 9,12 & 0   	
  		\end{array} \right), $$
  		
 $$ \left( \begin{array}{c|c|c|c|c|c}

 0 & \textbf{\textcolor{red}{11}}, \textbf{\textcolor{blue}{13}} & \textcolor{blue}{11},\textcolor{red}{15} & \textcolor{red}{8},\textcolor{blue}{14} & \textcolor{blue}{8},\textcolor{red}{13} & \textcolor{red}{14},\textcolor{blue}{15} \\
 
 \textcolor{blue}{11},\textcolor{red}{13} & 0 & \textcolor{blue}{10},\textcolor{red}{11} & \textcolor{blue}{7},\textcolor{red}{10} & \textcolor{red}{12},\textcolor{blue}{13} & \textcolor{red}{7},\textcolor{blue}{12} \\
 
 \textcolor{red}{11},\textcolor{blue}{15} & \textcolor{red}{10},\textcolor{blue}{11} & 0 & \textcolor{blue}{10},\textcolor{red}{16} & \textcolor{red}{9},\textcolor{blue}{16} & \textcolor{blue}{9},\textcolor{red}{15} \\
 
 8,14 & 7,10 & 10,16 & 0 & \underline{8,16} & \underline{7,14} \\
 8,13 & 12,13 & 9,16 & 8,16 & 0 & 9,12 \\
 14,15 & 7,12 & 9,15 & 7,14 & 9,12 & 0   	
 \end{array} \right), $$

Recall that the matrix $A$ cannot have repeated entries in any row and column.
If the entry of $A_{12}$ is chosen to be \textbf{\textcolor{red}{11}}, then all entries for the first three rows are uniquely determined as  the numbers in \textcolor{red}{red}. Then the entry of $A_{46}$ cannot be either $7$ or $14$, due to appearance of them in the sixth column. Contradiction!

Similarly, if the entry of $A_{12}$ is chosen to be \textbf{\textcolor{blue}{13}}, then all entries for the first three rows must be the numbers in \textcolor{blue}{blue}. Then the entry of $A_{45}$ cannot be either $8$ or $16$ due to the fifth column. Contradiction!

In conclusion, the Incidence graph of $(10,6,3)$-design is not
$(R)$-Ricci flat, even though it is triangle-free and has both maximum
possible Bakry-\'Emery curvature $\K_\infty(x) = 2$ and maximum possible
Olliver Ricci curvature $\kappa_{LLY}(x,y) = \frac{2}{d}$.

However, the vertices of this graph are Ricci flat via the following
matrix choice for $A$:
$$ \begin{pmatrix}
  11 & 0 & 15 & 8 & 13 & 14 \\
  13 & 12 & 11 & 10 & 0 & 7 \\
  0 & 11 & 10 & 16 & 9 & 15 \\
  14 & 10 & 16 & 7 & 8 & 0 \\
  8 & 13 & 9 & 0 & 16 & 12 \\
  15 & 7 & 0 & 14 & 12 & 9
  \end{pmatrix}.
$$
\end{example}

\section*{Appendix: The complete bipartite graphs $K_{d,d}$}

  We will show the following facts:
  \begin{itemize}
  \item[(1)] $K_{d,d}$ is $(R)$-Ricci flat for all $d$,
  \item[(2)] $K_{d,d}$ is $(S)$-Ricci flat for all $d$,
  \item[(3)] $K_{d,d}$ is $(RS$)-Ricci flat if and only if $d$ is even.
  \end{itemize}
  As before, we translate Ricci flatness properties at a vertex $x$, given
  by the maps $\eta_i$, into properties of the associated
  $d \times d$-matrix $A = (A_{ij})$. Since $K_{d,d}$ is triangle-free,
  we use a slightly different enumeration system for the matrix $A$:
  Let
  $S_1(x) = \{y_1,\dots, y_d\}$ where $y_j := \eta_j(x)$, and
  $S_2(x) =: \{z_1,\dots,z_t\}$ and, furthermore, $z_0 := x$.  Then the
  entries $A_{ij} \in \{0,1,\dots,t\}$ of $A$ are given via the
  relation
  $$ z_{A_{ij}} = \eta_i(y_j) $$
  and we have the following correspondences:
  \begin{itemize}
  \item[(a)] $\eta_i$ is injective corresponds to $A_{ij} \neq A_{ik}$ for
    all $j \neq k$,
  \item[(b)] $\eta_i(y_k) \neq \eta_j(y_k)$ corresponds to
    $A_{ik} \neq A_{jk}$ for all $i \neq j$,
  \item[(c)] $\eta_i^2(x) = x$ corresponds to $A_{ii} = 0$,
  \item[(d)] $\eta_j(\eta_ix) = \eta_i(\eta_jx)$ corresponds to $A_{ji}=A_{ij}$.
  \end{itemize}
  In other words, (a) corresponds to the property that $A$ has no
  repeated entries in the $i$-th row and (b) correspond to the
  property that $A$ has no repeated entries in the $k$-th
  column. Moreover, $(R)$-Ricci flatness requires in addition that the
  matrix $A$ has only the entry $0$ on the diagonal, $(S)$-Ricci flatness
  requires that $A$ is symmetric, and $(RS)$-Ricci flatness requires both
  additional properties of the matrix $A$. Note the
  general fact:
  \begin{itemize}
  \item[(e)] The number of occurrences of the entry
    $m \in \{0,\dots,t\}$ in the matrix $A$ is equal to $d_x^-(z_m)$.
  \end{itemize}  

  (1)-(3) can now be shown by providing suitable matrices $A$. 
  
  Proof of (1): 
  $$ A = \begin{pmatrix} 0 & 1 & 2 & \cdots & d-1 \\
    d-1 & 0 & 1 & \cdots & d-2 \\
    d-2 & d-1 & 0& \cdots & d-3 \\
    \vdots & \vdots & \vdots & \ddots & \vdots \\
    1 & 2 & 3 & \cdots & 0 \end{pmatrix}. $$
  Note that the first row of $A$ is fixed and the following rows are
  obtained by a right shift of the previous row.
  
  Proof of (2):
  $$ A = \begin{pmatrix} 0 & 1 & 2 & \cdots & d-1 \\
    1 & 2 & 3 & \cdots & 0 \\
    2 & 3 & 4 & \cdots & 1 \\
    \vdots & \vdots & \vdots & \ddots& \vdots \\
    d-1 & 0 & 1 & \cdots & d-2 \end{pmatrix}. $$
  Note that the first row of $A$ is fixed and the following rows are
  obtained by a left shift of the previous row.

  Proof of (3): Assume $d = 2n$ even. Then we can choose $A$ to be
  $$ {\scriptsize{\left( \begin{array}{cccccc|ccccc|c} 0 & 1 & 2 & 3 & \cdots & n-1 & n & n+1 & \cdots & 2n-3 & 2n-2 & 2n-1 \\
    1 & 0 & 3 & 4 & & n & n+1 & & & & 2n-1 & 2 \\
    2 & 3 & 0 & & & \udots & & & & \udots & 1 & 4 \\
    3 & 4 & & \ddots & & & & & & & 2 & 6 \\
    \vdots & & & & & & & \udots & & & & \vdots \\
    n-1 & n & \udots & & & 0 & 2n-1 & 1 & & & n-2 & 2n-2 \\
    \hline
    n & n+1 & & & & 2n-1 & 0 & 2 & & & n-1 & 1 \\
    n+1 & & & & \udots & 1 & 2 & \ddots & & & n & 3 \\
    \vdots & & & & & 2 & & & \udots & \udots & & \vdots \\
    \vdots & & \udots & & & & & & \udots & & & \vdots \\
    2n-2 & 2n-1 & 1 & 2 & & n-2 & n-1 & n & & & 0 & 2n-3 \\
    \hline          
    2n-1 & 2 & 4 & 6 & \cdots & 2n-2 & 1 & 3 & \cdots & \cdots & 2n-3 & 0
                         \end{array} \right),}} $$
  constructed as follows:
  \begin{itemize}
  \item $A_{ii}=0$ for all $1 \le i \le 2n$,
  \item $A_{ij}= i+j-2$ for $i \neq j$ and $i+j \le 2n+1$,
  \item $A_{ij}= i+j-2n-1$ for $i \neq j$, $i+j \ge 2n+2$ and $i,j \le 2n$,
  \item $A_{i,2n}=A_{2n,i} = 2(i-1)$ for $1 \le i \le n$,
  \item $A_{i,2n}=A_{2n,i} = 2(i-n)-1$ for $n+1 \le i \le 2n-1$. 
  \end{itemize}
  Finally, assume that $d$ is odd and $x$ is $(RS)$-Ricci flat with
  associated symmetric matrix $A$ with vanishing diagonal. Since
  $d_x^-(z_m) = d$ for all $m \in \{0,\dots,t\}$, each entry $m$
  appears exactly $d$ times in the matrix $A$ by (e) above. Since $d$
  is odd and $A$ symmetric, every entry must appear at least once on
  the diagonal, contradicting to the assumption of a vanishing
  diagonal.


\begin{thebibliography}{9}
	
\bibitem{BE85} D. Bakry and M. \'Emery, Diffusions hypercontractives
(French) [Hypercontractive diffusions], S\'eminaire de
probabilit\'es, XIX, 1983/84, Lecture Notes in Math. 1123,
J. Az\'{e}ma and M. Yor (Editors), Springer, Berlin, 1985, 177--206.

\bibitem{BCDDFP} V. Bonini, C. Carroll, U. Dinh, S. Dye,
  J. Frederick and E. Pearse, \emph{Condensed Ricci Curvature of
    Complete and Strongly Regular Graphs}, arXiv:1907.06733.

\bibitem{I} D. Bourne, D. Cushing, S. Liu, F. M\"unch and N. Peyerimhoff, \emph{Ollivier-Ricci idleness functions on graphs}, SIAM J. Discrete Math. {\bf{32}}(2) (2018), 1408--1424.

\bibitem{Brezis} H. Brezis, \emph{Remarks on the
    {M}onge-{K}antorovich problem in the discrete setting},
  C. R. Math. Acad. Sci. Paris {\bf{356}}(2) (2018), 207--213.
  
\bibitem{BH} A. E. Brouwer and W. H. Haemers, \emph{Spectra
    of graphs}, Universitext, Springer, New York, 2012.

\bibitem{ChY96} F. R. K. Chung and S.-T. Yau, \emph{Logarithmic
    {H}arnack inequalities}, Math. Res. Lett. {\bf{3}}(6) (1996),
  793--812.

\bibitem{rigidity} D. Cushing, S. Kamtue, J. Koolen, S. Liu, F. M\"unch and N. Peyerimhoff, \emph{Rigidity of the Bonnet-Myers inequality for graphs with respect to Ollivier Ricci curvature}, arXiv:1807.02384.
  
\bibitem{CKLLS2018} D. Cushing, R. Kangaslampi, V. Lipi\"
  ainen, S. Liu and G. W. Stagg, \emph{The Graph Curvature
    Calculator and the curvatures of cubic graphs}, Exp. Math. (2019), 1--13, doi.org/10.1080/10586458.2019.1660740.

\bibitem{CLP} D. Cushing, S. Liu, and N. Peyerimhoff,
  \emph{Bakry-\'Emery Curvature Functions on Graphs}, Canad. J. Math. (2019), 1--55,
  doi.org/10.4153/CJM-2018-015-4, arXiv:1606.01496.

\bibitem{Distreg} Online repository of distance-regular graphs at
  \url{https://www.distanceregular.org/}

\bibitem{Elworthy} K. D. Elworthy, \emph{Manifolds and graphs with mostly positive curvatures}, Stochastic Analysis and Applications, Progr. Probab. {\bf{26}} (1991), 96--110.
  
\bibitem{GWM} F. Gurr and L. W. May, \emph{Incomplete
    2-Balls with Non-negative Curved Centre for Quartic Graphs},
  Ancillary file ''non-negative-classification.pdf'' of D. Cushing,
  S. Kamtue, N. Peyerimhoff, L. Watson May: \emph{Quartic graphs
    which are Bakry-\'Emery curvature sharp}, arXiv:1903.10665.

\bibitem{JL} J. Jost and S. Liu, {\it Ollivier’s Ricci curvature, local clustering and curvature-dimension inequalities on graphs}, Discrete Comput. Geom. 51 (2014), 300–-322.

\bibitem{LLY} Y. Lin, L. Lu and S.-T. Tau, \emph{Ricci curvature of graphs}, Tohoku Math. J. (2) {\bf{63}}(4) (2011), 605--627.
  
\bibitem{LY} Y. Lin and S.-T. Yau, \emph{Ricci curvature and
    eigenvalue estimate on locally finite graphs},
  Math. Res. Lett. {\bf{17}}(2) (2010), 343--356.
  
\bibitem{Oll} Y. Ollivier, {\it Ricci curvature of Markov chains on metric spaces}, J. Funct. Anal. {\bf{256}} (2009), 810--864.
  
\bibitem{Ralli} Peter Ralli, \emph{Bounds on curvature in regular graphs},
  arXiv:1701.08205.
  
\bibitem{Schm99} M. Schmuckenschl\"{a}ger, \emph{Curvature of
	nonlocal {M}arkov generators}, in Convex geometric analysis
({B}erkeley, {CA}, 1996), Math. Sci. Res. Inst. Publ. 34, Cambridge
Univ. Press, Cambridge, 1999, 189--197.

\bibitem{Villani} C. Villani, \emph{Topics in optimal
    transportation}, Graduate Studies in Mathematics 58, American
  Mathematical Society, Providence, RI, 2003.

\bibitem{Vill09} C. Villani, {\it Optimal transport, old and new}, Grundlehren der Mathematischen Wissenschaften [Fundamental Principles of
Mathematical Sciences], vol. 338, Springer-Verlag, Berlin, 2009.
  
\end{thebibliography}
\end{document}